\setlist[enumerate]{nosep}
\definecolor{labelkey}{rgb}{0,0.08,0.45}
\definecolor{refkey}{rgb}{0,0.6,0.0}
\definecolor{Brown}{rgb}{0.45,0.0,0.05}
\definecolor{lime}{rgb}{0.00,0.8,0.0}
\definecolor{lblue}{rgb}{0.7,0.7,0.99}
\definecolor{OliveGreen}{rgb}{0,0.6,0}
\definecolor{tyrianpurple}{rgb}{0.4, 0.01, 0.24}
\colorlet{hlcyan}{cyan!30}
\def\namedlabel#1#2{\begingroup
   \def\@currentlabel{#2}%
   \label{#1}\endgroup
}
\newcommand{\seppthree}{\setlength{\itemsep}{-3pt}}
\newcommand{\J}[1]{\ensuremath{J_{#1}}}
\newcommand{\R}[1]{\ensuremath{R_{#1}}}
\newcommand{\bx}{\ensuremath{\mathbf{x}}}
\newcommand{\weakly}{\ensuremath{\:{\rightharpoonup}\:}}
\newcommand{\nnn}{\ensuremath{{n\in{\mathbb N}}}}
\newcommand{\kkk}{\ensuremath{{k\in{\mathbb N}}}}
\newcommand{\thalb}{\ensuremath{\tfrac{1}{2}}}
\newcommand{\menge}[2]{\big\{{#1}~\big |~{#2}\big\}}
\newcommand{\To}{\ensuremath{\rightrightarrows}}
\newcommand{\fenv}[1]%
{\ensuremath{\,\overrightarrow{\operatorname{env}}_{#1}}}
\newcommand{\benv}[1]%
{\ensuremath{\,\overleftarrow{\operatorname{env}}_{#1}}}
\newcommand{\scal}[2]{\left\langle{#1},{#2}  \right\rangle}
\newcommand{\Tt}{\ensuremath{\widetilde{T}}}
\newcommand{\tr}{\ensuremath{\triangleright}}
\newcommand{\RR}{\ensuremath{\mathbb R}}
\newcommand{\dom}{\ensuremath{\operatorname{dom}}\,}
\DeclareMathOperator*{\argmin}{argmin}
\newcommand{\ran}{\ensuremath{{\operatorname{ran}}\,}}
\newcommand{\zer}{\ensuremath{\operatorname{zer}}}
\newcommand{\Fix}{\ensuremath{\operatorname{Fix}}}
\newcommand{\Id}{\ensuremath{\operatorname{Id}}}
\newcommand{\pinf}{\ensuremath{+\infty}}
\def\th@plain{%
	\thm@notefont{}
	\itshape 
}
\def\th@definition{%
	\thm@notefont{}
	\normalfont 
}
\crefname{equation}{}{equations}
\crefname{chapter}{Appendix}{chapters}
\crefname{item}{}{items}
\crefname{enumi}{}{}
\newtheorem{theorem}{Theorem}[section]
\newtheorem{lemma}[theorem]{Lemma}
\newtheorem{corollary}[theorem]{Corollary}
\newtheorem{proposition}[theorem]{Proposition}
\newtheorem{definition}[theorem]{Definition}
\newtheorem{example}[theorem]{Example}
\newtheorem{fact}[theorem]{Fact}
\newtheorem{remark}[theorem]{Remark}
\providecommand{\norm}[1]{\lVert#1\rVert}
\providecommand{\RR}{\mathbb{R}}
\providecommand{\ran}{\operatorname{ran}}
\providecommand{\dom}{\operatorname{dom}}
\newcommand{\fix}{\ensuremath{\operatorname{Fix}}}
\providecommand{\gr}{\operatorname{gra}}
\providecommand{\Id}{\operatorname{{ Id}}}
\providecommand{\To}{\rightrightarrows}
\providecommand{\gr}{\operatorname{gra}}
\providecommand{\fix}{\operatorname{Fix}}
\providecommand{\ran}{\operatorname{ran}}
\providecommand{\Id}{\operatorname{Id}}
\providecommand{\zer}{\operatorname{zer}}
\providecommand{\R}{{ R}}
\newcommand{\cran}{\ensuremath{\overline{\operatorname{ran}}\,}}
\providecommand{\RR}{\mathbb{R}}
\definecolor{myblue}{rgb}{0.9,0.9,0.98}
  \newcommand*\mybluebox[1]{%
    \colorbox{myblue}{\hspace{1em}#1\hspace{1em}}}
\newcommand*{\tran}{^{\mkern-1.5mu\mathsf{T}}}
\newcommand{\rddots}{\rotatebox[origin=lT]{10}{$\ddots$}}
\DeclarePairedDelimiter{\parens}{\lparen}{\rparen}
\begin{document}

\setlength{\abovedisplayskip}{8pt}
\setlength{\belowdisplayskip}{8pt}	
\newsavebox\myboxA
\newsavebox\myboxB
\newlength\mylenA

\newcommand*\xoverline[2][0.75]{%
    \sbox{\myboxA}{$#2$}%
    \setbox\myboxB\null
    \ht\myboxB=\ht\myboxA%
    \dp\myboxB=\dp\myboxA%
    \wd\myboxB=#1\wd\myboxA
    \sbox\myboxB{$\overline{\copy\myboxB}$}
    \setlength\mylenA{\the\wd\myboxA}
    \addtolength\mylenA{-\the\wd\myboxB}%
    \ifdim\wd\myboxB<\wd\myboxA%
       \rlap{\hskip 0.5\mylenA\usebox\myboxB}{\usebox\myboxA}%
    \else
        \hskip -0.5\mylenA\rlap{\usebox\myboxA}{\hskip 0.5\mylenA\usebox\myboxB}%
    \fi}
\makeatother

\makeatletter
\renewcommand*\env@matrix[1][\arraystretch]{%
  \edef\arraystretch{#1}%
  \hskip -\arraycolsep
  \let\@ifnextchar\new@ifnextchar
  \array{*\c@MaxMatrixCols c}}
\makeatother

\providecommand{\wbar}{\xoverline[0.9]{w}}
\providecommand{\ubar}{\xoverline{u}}

\newcommand{\nn}[1]{\ensuremath{\textstyle\mathsmaller{({#1})}}}
\newcommand{\crefpart}[2]{%
  \hyperref[#2]{\namecref{#1}~\labelcref*{#1}~\ref*{#2}}%
}
\newcommand\bigzero{\makebox(0,0){\text{\LARGE0}}}
	

%

\author{
Heinz H.\ Bauschke\thanks{
Mathematics, University
of British Columbia,
Kelowna, B.C.\ V1V~1V7, Canada. E-mail:
\texttt{heinz.bauschke@ubc.ca}.},~
Walaa M.\ Moursi\thanks{
Department of Combinatorics and Optimization, 
University of Waterloo,
Waterloo, Ontario N2L~3G1, Canada.
 E-mail: \texttt{walaa.moursi@uwaterloo.ca}.}
,~
Shambhavi Singh\thanks{
Mathematics, University
of British Columbia,
Kelowna, B.C.\ V1V~1V7, Canada. E-mail:
\texttt{sambha@mail.ubc.ca}.},
~and~
Xianfu Wang\thanks{
Mathematics, 
University of British Columbia,
Kelowna, B.C.\ V1V~1V7, Canada.
 E-mail: \texttt{shawn.wang@ubc.ca}.
}
}

\title{\textsf{
On the Bredies-Chenchene-Lorenz-Naldi algorithm:\\ 
linear relations and strong convergence
}
}

\date{April 23, 2025}

\maketitle

\begin{abstract}
Monotone inclusion problems occur in many areas
of optimization and variational analysis. 
Splitting methods, which utilize resolvents or proximal mappings of the underlying operators, are often applied to solve these problems. 
In 2022, Bredies, Chenchene, Lorenz, and Naldi
introduced a new elegant algorithmic framework that 
encompasses various well known algorithms including 
Douglas-Rachford and Chambolle-Pock. They obtained 
powerful weak and strong convergence results, where the latter type relies on additional strong monotonicity assumptions. 

In this paper, we complement the analysis
by Bredies et al.\ by relating the projections of the fixed point sets of the underlying operators that generate
the (reduced and original) preconditioned proximal point
sequences. 
We obtain a new strong convergence result when the underlying operator is a linear relation. We note 
without assumptions such as linearity or strong monotonicity, 
one may encounter weak convergence without strong convergence.
In the case of the Chambolle-Pock algorithm, we obtain a new result that yields strong convergence 
to the projection onto the intersection of a linear subspace
and the preimage of a linear subspace. Splitting algorithms 
by Ryu and by Malitsky and Tam are also considered. 
Various examples are provided to illustrate the applicability of our results.

\end{abstract}
{ 
\small
\noindent
{\bfseries 2020 Mathematics Subject Classification:}
{Primary 
47H05, 
47H09;
Secondary 
47N10, 
65K05, 
90C25.
}

\noindent {\bfseries Keywords:}
Bredies-Chenchene-Lorenz-Naldi algorithm,
Chambolle-Pock algorithm,
Douglas-Rachford algorithm, 
Malitsky-Tam algorithm, 
maximally monotone operator, 
proximal point algorithm, 
Ryu's algorithm, 
splitting algorithm. 
}

\section{Introduction}

Throughout, we assume that 
\begin{empheq}[box=\mybluebox]{equation}
\text{$H$ is
a real Hilbert space with inner product 
$\scal{\cdot}{\cdot}\colon H\times H\to\RR$, }
\end{empheq}
and induced norm $\|\cdot\|$.
We are given a set-valued operator $A\colon H\To H$.
Our goal is to find a point $x$ in $\zer A$, 
which we assume to be nonempty: 
\begin{equation}
\label{e:thegoal}
\text{find $x\in H$ such that $0\in Ax$.}
\end{equation}
This is a very general and powerful problem in 
modern optimization and variational analysis; 
see, e.g., \cite{BC2017}. 
Following the framework proposed recently 
by Bredies, Chenchene, Lorenz, and Naldi \cite{BCLN} 
(see also the follow up \cite{BCN} and 
the related works\footnote{We would like to thank 
Dr.\ Panos Patrinos for bringing these to our attention.} \cite{LP}, \cite{EPLP}, and \cite{ELP}), 
we also assume that 
we are given another (possibly different) 
real Hilbert space $D$, as well as a
continuous linear operator 
\begin{empheq}[box=\mybluebox]{equation}
C\colon D\to H,
\end{empheq}
with adjoint operator $C^*\colon H\to D$. 
We shall (usually) assume that $C^*$ is surjective. 
Because $D = \ran C^* = \cran C^* = (\ker C)^\perp$,
it is clear that $\ker C= \{0\}$ and $C$ is injective. 
We thus  think of 
$D$ as a space that is ``smaller'' than $H$. 
Now set 
\begin{empheq}[box=\mybluebox]{equation}
\label{e:defM}
M := CC^*\colon H\to H, 
\end{empheq}
and one thinks of $M$ as a preconditioner\footnote{The viewpoint taken in \cite{BCLN} is to start with a preconditioner $M$ and then to construct the operator $C$; see \cite[Proposition~2.3]{BCLN} for details.}. 
It follows from \cite[Lemma~8.40]{Deutsch} that 
$\ran(M)=\ran(C)$ is closed. 
We shall assume that 
the set-valued map 
\begin{equation}
\label{e:Amono}
\text{$H\To H\colon 
x\mapsto Ax\cap\ran M$ is monotone,}
\end{equation}
which is clearly true if $A$ itself is monotone, 
and that 
\begin{equation}
\label{e:rfk}
\text{
$(A+M)^{-1}$ is single-valued, full-domain, and 
Lipschitz continuous. 
}
\end{equation}
This guarantees that the resolvent\footnote{We use the notation $\J{B}:=(\Id+B)^{-1}$ for the classical resolvent and $\R{B}:=2\J{B}-\Id$ for the reflected resolvent.} of $M^{-1}A$, 
\begin{empheq}[box=\mybluebox]{equation}
\label{e:defT}
T := (M+A)^{-1}M = (\Id+M^{-1}A)^{-1} = \J{M^{-1}A}\colon H\to H
\end{empheq}
is also single-valued, full-domain, and Lipschitz continuous\footnote{$M^{-1}A$ is not necessarily monotone and hence $T$ need not be firmly nonexpansive; however, it is ``$M$-monotone'' by \cref{e:Amono} and \cite[Proposition~2.5]{BCLN}. See also \cref{sss:2lines} below.}. 
The importance of $T$ for the problem
\cref{e:thegoal} is the relationship
\begin{equation}
\Fix T = \zer A.
\end{equation}
Associated with $T$, which operates in $H$,
is the operator (see \cite[Theorem~2.13]{BCLN} or \cite{BA-R})
\begin{empheq}[box=\mybluebox]{equation}
\label{e:defTt}
\Tt := C^*(M+A)^{-1}C 
= \J{C^*\tr A}\colon D\to D, 
\end{empheq}
which operates in $D$; here $C^*\tr A := (C^*A^{-1}C)^{-1}$ is a maximally monotone operator on $D$ and hence $\Tt$ is firmly nonexpansive. 
Furthermore, we always assume that 
\begin{empheq}[box=\mybluebox]{equation}
\text{$(\lambda_k)_\kkk$ is a sequence of real parameters in $[0,2]$ with $\sum_\kkk \lambda_k(2-\lambda_k)<\pinf$, }
\end{empheq}
and sometimes we will impose the more restrictive 
condition that 
\begin{equation}
\label{e:stayaway}
0<\inf_\kkk\lambda_k\leq \sup_\kkk\lambda_k<2. 
\end{equation}
In order to solve \cref{e:thegoal},
Bredies et al.\ \cite{BCLN} 
consider two sequences which are tightly intertwined:
The first sequence, generated by the so-called 
\emph{preconditioned proximal point (PPP) algorithm},  resides in $H$ and is given by 
\begin{empheq}[box=\mybluebox]{equation}
\label{e:genu}
u_{k+1} := 
(1-\lambda_k)u_k + \lambda_k Tu_k, 
\end{empheq}
where $u_0\in H$ is the starting point (see \cite[equation~(2.2)]{BCLN}).
The second sequence, generated by the so-called
\emph{reduced 
preconditioned proximal point (rPPP) algorithm}
resides in $D$ and is given by 
(see \cite[equation~(2.15)]{BCLN}) 
\begin{empheq}[box=\mybluebox]{equation}
\label{e:genw}
w_{k+1} := 
(1-\lambda_k)w_k + \lambda_k \Tt w_k, 
\end{empheq}
where $w_0 = C^*u_0$. 

With these sequences in place, we now state  
the main result on the PPP and rPPP algorithms from Bredies et al.

\begin{fact}[Bredies-Chenchene-Lorez-Naldi, 2022]
\label{f:known}
For every starting point $u_0\in H$, set $w_0 := C^*u_0$. 
Then there  exist $u^*\in\zer A = \Fix T$ and 
$w^*\in \Fix \Tt$ such that the following hold:
\begin{enumerate}
\item 
\label{f:known1}
The sequence $(Tu_k)_\kkk$ converges weakly to $u^*$.
\item 
\label{f:known2}
The sequence $(w_k)_\kkk$  converges weakly to $w^*$. 
\item 
\label{f:known3}
The sequence $((M+A)^{-1}Cw_k)_\kkk$ coincides with 
$(Tu_k)_\kkk$. 
\item 
\label{f:known4}
The sequence $(C^*u_k)_\kkk$ coincides with 
$(w_k)_\kkk$. 
\item 
\label{f:known5}
$u^*=(M+A)^{-1}Cw^*$. 
\item 
\label{f:known6}
$w^*=C^*u^*$. 
\item 
\label{f:known7}
If \cref{e:stayaway} holds, then $(u_k)_\kkk$ also converges weakly to $u^*$. 
\end{enumerate}
\end{fact}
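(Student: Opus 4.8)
The plan is to dispatch the ``bookkeeping'' identities first, then push the iteration down to the small space $D$, where $\Tt$ is \emph{genuinely} firmly nonexpansive, invoke the standard fixed-point-iteration theory there, and finally lift everything back to $H$. \textbf{Step 1 (\cref{f:known4} and \cref{f:known3}).} Everything rests on the intertwining identity $C^*T = \Tt C^*$, which one reads off from \cref{e:defM}, \cref{e:defT}, \cref{e:defTt}: $C^*T = C^*(M+A)^{-1}CC^* = \bigl(C^*(M+A)^{-1}C\bigr)C^* = \Tt C^*$. Since $w_0 = C^*u_0$, a one-line induction comparing \cref{e:genu} with \cref{e:genw} and using this identity yields $w_k = C^*u_k$ for all $\kkk$, which is \cref{f:known4}; then $(M+A)^{-1}Cw_k = (M+A)^{-1}CC^*u_k = (M+A)^{-1}Mu_k = Tu_k$ by \cref{e:defT}, which is \cref{f:known3}.

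\textbf{Step 2 (\cref{f:known2}, \cref{f:known5}, \cref{f:known6}).} As recorded after \cref{e:defTt}, $\Tt = \J{C^*\tr A}$ is firmly nonexpansive on $D$, and $\Fix\Tt\neq\emptyset$: for any $\bar u\in\zer A = \Fix T$ (nonempty by the standing hypothesis) the intertwining identity gives $\Tt(C^*\bar u) = C^*(T\bar u) = C^*\bar u$. Hence \cref{e:genw} is a relaxed proximal-point iteration for a firmly nonexpansive operator with nonempty fixed-point set, so the standing assumptions on $(\lambda_k)_\kkk$ and the usual theory (see \cite{BC2017} or \cite{BCLN}) produce $w^*\in\Fix\Tt$ with $w_k\weakly w^*$ — this is \cref{f:known2} — and, equally importantly, the asymptotic regularity $\Tt w_k - w_k\to 0$. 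Set $u^* := (M+A)^{-1}Cw^*$, so \cref{f:known5} holds by construction; then $C^*u^* = C^*(M+A)^{-1}Cw^* = \Tt w^* = w^*$ is \cref{f:known6}, and $Mu^* = CC^*u^* = Cw^*$ gives $Tu^* = (M+A)^{-1}Mu^* = (M+A)^{-1}Cw^* = u^*$, i.e.\ $u^*\in\Fix T = \zer A$.

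\textbf{Step 3 (\cref{f:known1}).} By \cref{f:known3}, $Tu_k = (M+A)^{-1}Cw_k$; since $(M+A)^{-1}$ is Lipschitz by \cref{e:rfk} and $(w_k)_\kkk$ is bounded, $(Tu_k)_\kkk$ is bounded. Let $v$ be a weak sequential cluster point, say $Tu_{k_n}\weakly v$, and set $a_n := Cw_{k_n} - M(Tu_{k_n})\in A(Tu_{k_n})$. Using \cref{f:known4} and the intertwining identity, $a_n = C\bigl(w_{k_n} - C^*Tu_{k_n}\bigr) = C\bigl(w_{k_n} - \Tt w_{k_n}\bigr)\to 0$ strongly. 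Since $M$ is weakly continuous, $Cw_{k_n} = M(Tu_{k_n}) + a_n\weakly Mv$; but also $Cw_{k_n}\weakly Cw^*$, hence $Mv = Cw^* = Mu^*$, and because $\ker C = \{0\}$ this already forces $C^*v = C^*u^* = w^*$. It remains to show $v\in\Fix T$: the data are $Tu_{k_n}\weakly v$, $a_n\in A(Tu_{k_n})\cap\ran M$, and $a_n\to 0$, so what is needed is a demiclosedness property of $A$ along $\ran M$ yielding $0\in Av$; granting it, $v\in\zer A=\Fix T$, whence $v = Tv = (M+A)^{-1}Mv = (M+A)^{-1}Cw^* = u^*$. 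Thus the bounded sequence $(Tu_k)_\kkk$ has $u^*$ as its only weak cluster point, i.e.\ $Tu_k\weakly u^*$.

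\textbf{Step 4 (\cref{f:known7}) and the main obstacle.} Assume \cref{e:stayaway}. Since $\ran C$ is closed, $H = \ran C\oplus\ker C^*$ orthogonally; put $e_k := P_{\ker C^*}(u_k - u^*)$. Applying $P_{\ker C^*}$ to $u_{k+1} - u^* = (1-\lambda_k)(u_k - u^*) + \lambda_k(Tu_k - u^*)$ gives $e_{k+1} = (1-\lambda_k)e_k + \lambda_k f_k$, where $f_k := P_{\ker C^*}(Tu_k - u^*)$ is bounded and satisfies $f_k\weakly 0$ by \cref{f:known1}; testing against any fixed $h\in\ker C^*$ turns this into the scalar recursion $\beta_{k+1} = (1-\lambda_k)\beta_k + \lambda_k\gamma_k$ with $\sup_\kkk|1-\lambda_k| < 1$ (from \cref{e:stayaway}) and $\gamma_k\to 0$, which forces $\beta_k\to 0$. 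Any $h\in H$ decomposes as $h = Cd + h'$ with $h'\in\ker C^*$, and $\scal{u_k - u^*}{h} = \scal{w_k - w^*}{d} + \scal{e_k}{h'}\to 0$ by \cref{f:known2} and the previous line; hence $u_k\weakly u^*$, which is \cref{f:known7}. The one place where genuine work is concealed is the demiclosedness claim in Step 3: transporting weak convergence back from $D$ to $H$ requires the graph of $A$, restricted to $\ran M$ (where the elements $a_n$ live), to be sequentially closed in the weak--strong topology near $(u^*,0)$. This is automatic when $A$ is maximally monotone, and in the present generality it is precisely where the monotonicity assumption \cref{e:Amono}, the Lipschitz and full-domain assumption \cref{e:rfk}, and the $M$-monotonicity of $T$ (\cite[Proposition~2.5]{BCLN}) must be combined; I expect that to be the crux.
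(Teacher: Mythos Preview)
The paper does not prove this Fact at all; it simply cites the relevant results from \cite{BCLN}. Your proposal is therefore not a comparison target but a genuine attempt at a self-contained argument, and most of it is sound: the intertwining identity $C^*T=\Tt C^*$ cleanly dispatches \cref{f:known3}--\cref{f:known4}; pushing to $D$ and using firm nonexpansiveness of $\Tt$ handles \cref{f:known2}, \cref{f:known5}, \cref{f:known6} correctly; and your Step~4 for \cref{f:known7} (decomposing $H=\ran C\oplus\ker C^*$ and reducing to a contractive scalar recursion on the $\ker C^*$-component) is a clean argument that does work once \cref{f:known1} is in hand.

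The genuine gap is exactly where you flag it. In Step~3 you have $(Tu_{k_n},a_n)\in\gra A$ with $a_n\in\ran M$, $Tu_{k_n}\weakly v$, and $a_n\to 0$, and you need $0\in Av$. Hypothesis \cref{e:Amono} gives only \emph{monotonicity} of $x\mapsto Ax\cap\ran M$, not maximality, and mere monotone operators need not have weak$\times$strong sequentially closed graphs; nor does the surjectivity encoded in \cref{e:rfk} supply a Minty-type argument here, because $M$ may have nontrivial kernel. Transferring the problem to $D$ via $C^*\tr A$ (which \emph{is} maximally monotone) only reproduces $w^*\in\Fix\Tt$ and $C^*v=w^*$, information you already have; it does not pin down $v$ itself inside $v+\ker C^*$. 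In \cite{BCLN} this is handled differently: they work in $H$ with the $M$-seminorm, show $T$ is $M$-firmly nonexpansive (their Lemma~2.6), and establish a bespoke demiclosedness principle (Lemma~2.8) tailored to that structure, from which \cref{f:known1} follows. So your ``lift back from $D$'' strategy, elegant as it is for the bookkeeping items, cannot avoid importing that $M$-seminorm demiclosedness machinery for \cref{f:known1}; you should invoke it explicitly rather than ``grant'' it.
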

\begin{proof}
\cref{f:known1}: 
See \cite[Corollary~2.10]{BCLN}. 
\cref{f:known2}: 
See \cite[Corollary~2.15]{BCLN}. 
\cref{f:known3}: 
This is buried in \cite[Proof of Corollary~2.15]{BCLN}. 
\cref{f:known4}: 
See \cite[Theorem~2.14]{BCLN}. 
\cref{f:known5}: 
Combine \cite[Corollary~2.15]{BCLN}, which states 
that $((M+A)^{-1}Cw_k)_\kkk$ converges weakly 
to $(M+A)^{-1}Cw^*$, with \cref{f:known3} and \cref{f:known1}.
\cref{f:known6}: 
This is buried in \cite[Proof of Corollary~2.15]{BCLN}. 
\cref{f:known7}: 
See \cite[Corollary~2.10]{BCLN}. 
\end{proof}

In \cite[Section~2.3]{BCLN} Bredies et al.\ also obtain
linear convergence under the usual assumptions on strong
monotonicity. Their framework encompasses an impressive number of other algorithms (see \cite[Section~3]{BCLN} for details) including Douglas-Rachford, Chambolle-Pock, and others. 

\emph{The goal of this paper is to complement the work by 
Bredies et al.\ \cite{BCLN}}. Specifically, our main results are:
\begin{enumerate}
\item[\bf R1] 
We introduce the notion of the $M$-projection onto $\Fix T$ (\cref{d:Mproj}) and we show how it is obtained from the (regular) projection onto $\Fix \Tt$ (\cref{t:u*}). 
\item[\bf R2]
We obtain \emph{strong} convergence results for PPP and rPPP sequences in the case when $A$ is a linear relation (\cref{t:sconv}). 
\end{enumerate}
Assume $A$ is a linear relation. Then $T$ and $\Tt$ are linear operators and so trivially $0\in \Fix T$ and $0\in \Fix\Tt$. \textbf{R1} yields the exact limits, which 
are closest to the starting points (with respect to the seminorm induced by $M$) and which thus may be different from $0$, while \textbf{R2} guarantees strong convergence of the sequences generated by PPP and rPPP.

We also provide: 
(i) 
various applications to algorithms in the context of normal cone operators of closed linear subspaces and
(ii)
an example where the range of the preconditioner $M$ is not closed\footnote{The existence of such an example was proclaimed in \cite[Remark~3.1]{BCLN}.}. 

Finally, let us stress that in the absence of strong 
monotonicity and linearity, \emph{weak convergence may occur!} 
We shall demonstrate this for Douglas-Rachford and for Chambolle-Pock 
(see \cref{sss:DRnotstrong} and \cref{sss:CPnotstrong} below), by making use of the following classical example due 
to Genel and Lindenstrauss \cite{GL}:

\begin{fact}[Genel-Lindenstrauss]
\label{f:GL}
In $\ell^2$, there exists a nonempty bounded closed convex set $S$, 
a nonexpansive mapping $N \colon S\to S$, and a starting point 
$s_0 \in S$ such that the sequence generated by 
$s_{k+1} := \thalb s_k + \thalb Ns_k$ converges weakly
 --- but not strongly --- 
to a fixed point of $N$.
\end{fact}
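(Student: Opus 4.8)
The plan is to recall that this is the classical construction of Genel and Lindenstrauss \cite{GL} and to sketch the mechanism behind it rather than reproduce the (lengthy) verification. First I would reduce the statement to the construction of the triple $(S,N,s_0)$: by the standard Krasnoselskii--Mann theorem (Opial), for any nonexpansive self-map $N$ of a nonempty bounded closed convex set $S$ in a Hilbert space and any $s_0\in S$, the sequence $s_{k+1}=\thalb s_k+\thalb Ns_k$ converges weakly to some point of $\Fix N$; hence the only thing that has to be arranged is that $\|s_k\|$ does \emph{not} converge to the norm of that weak limit. Concretely one takes $S$ to be the closed unit ball of $\ell^2$ and builds $N$ so that $0$ is its unique fixed point and the orbit stays on spheres of radius bounded away from $0$ while drifting to $0$ weakly.

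The engine is the elementary identity that averaging a vector with its rotate by a small angle shrinks the norm only to second order: if $R_\theta$ rotates $x$ by angle $\theta$ inside a $2$-dimensional coordinate plane, then
\[
\Norm{\thalb x+\thalb R_\theta x}^2=\thalb\|x\|^2(1+\cos\theta)=\|x\|^2\cos^2(\theta/2).
\]
So if the $k$-th step of the iteration behaves like averaging $s_k$ with a rotation by angle $\theta_k$ acting on a \emph{fresh} pair of coordinates, then $\|s_{k+1}\|=\|s_k\|\cos(\theta_k/2)$, hence $\|s_k\|=\|s_0\|\prod_{j=0}^{k-1}\cos(\theta_j/2)$. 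Choosing angles with $\sum_k\theta_k^2<\pinf$ but $\sum_k\theta_k=\pinf$ makes this product converge to a strictly positive limit $\ell$, so $\|s_k\|\to\ell>0$ and the orbit cannot converge strongly (its weak limit is $0$, yet it stays at distance $\ell$ from $0$); on the other hand, because the ``active'' coordinates march off to infinity, each fixed coordinate of $s_k$ tends to $0$, and on the bounded set $S$ this gives $s_k\weakly 0$. To realise these dynamics by a single genuinely nonexpansive self-map of the whole ball, one composes block rotations with a coordinate shift (so that every iteration advances to new coordinates) and then damps/retracts radially so that the resulting $N$ maps $S$ into $S$, is nonexpansive on all of $S$, and fixes only $0$; $s_0$ is taken on the unit sphere.

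I expect the main obstacle to be precisely the part that the mechanism above hides: verifying that the assembled map is nonexpansive \emph{everywhere} on $S$ (not merely along the distinguished orbit) and that it indeed maps the ball into itself, which is where the careful radial construction of \cite{GL} is needed; a secondary point is to ensure the orbit does not accidentally converge strongly to a nonzero fixed point, which is handled by arranging $\Fix N=\{0\}$. Since all of this is carried out in \cite{GL}, I would simply cite that paper for the detailed verification and use the statement as recorded in \cref{f:GL}.
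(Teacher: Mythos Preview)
The paper does not prove this statement at all: it is recorded as a \emph{Fact} with a bare citation to \cite{GL}, and the text moves on immediately. Your proposal ultimately does the same thing---you defer the verification to \cite{GL}---so in that sense your approach coincides with the paper's. The sketch you add (the second-order norm shrinkage under small rotations, the choice $\sum\theta_k^2<\infty$ with $\sum\theta_k=\infty$, and the coordinate shift to force weak convergence to $0$) is accurate and captures the mechanism of the Genel--Lindenstrauss construction, but it goes beyond what the paper itself supplies.
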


The remainder of this paper is organized as follows.
In \cref{sec:aux}, we collect some auxiliary results which are required later. 
Our main result \textbf{R1} along with the limits of the sequences generated by the PPP and rPPP algorithms are discussed in \cref{sec:limits}. 
In \cref{sec:conv}, we present our main result \textbf{R2}. 
Various algorithms are analyzed as in \cref{sec:examples}, 
with an emphasis on the case when normal cone operators 
of closed linear subspaces are involved. 
Some additional results concerning the Chambolle-Pock algorithm are provided in \cref{s:moreCP}, including 
an example where the range of $M$ is not closed.

The notation we employ is fairly standard and follows 
largely \cite{BC2017}.

\section{Auxiliary results}
\label{sec:aux}

In this section, we shall collect various results that 
will make subsequent proofs more structured. 

\subsection{More results from Bredies et al.'s \cite{BCLN}}

The next result is essentially due to Bredies et al.; however, it is somewhat buried in
\cite[Proof of Theorem~2.14]{BCLN}: 

\begin{proposition}[$\Fix T$ vs $\Fix \Tt$]  
\label{p:FixTtilde}
Recall the definitions of $T$ and $\Tt$ from 
\cref{e:defT} and \cref{e:defTt}. Then: 
\begin{enumerate}
\item 
\label{p:FixTtilde1}
If $u\in\Fix T$, then $C^*u\in\Fix \Tt$. 
\item 
\label{p:FixTtilde2}
If $w\in\Fix \Tt$, then $w = C^*u$, where $u:=(M+A)^{-1}Cw\in\Fix T$. 
\item 
\label{p:FixTtilde3}
If $u\in\Fix T$, then $u= (M+A)^{-1}Cw$, 
where $w := C^*u\in\Fix\Tt$. 
\end{enumerate}
Consequently, 
the functions $C^*\colon \Fix T\to\Fix\Tt$ and 
$(M+A)^{-1}C \colon \Fix \Tt\to \Fix T$ are bijective and inverse of each other, and 
\begin{equation}
\label{e:FixTtilde}
C^*\big(\Fix T\big) = \Fix \Tt \;\;\text{and}\;\; 
(M+A)^{-1}C\big(\Fix \Tt\big) = \Fix T. 
\end{equation}
\end{proposition}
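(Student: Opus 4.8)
The plan is to unwind all three implications directly from the defining identities $M=CC^*$, $T=(M+A)^{-1}M$ and $\Tt=C^*(M+A)^{-1}C$, together with the standing hypothesis that $(M+A)^{-1}$ is single-valued and full-domain. The one preliminary fact I would record first is the chain of equivalences, valid for every $u\in H$,
\begin{equation*}
u\in\Fix T \iff u=(M+A)^{-1}(Mu) \iff Mu\in Mu+Au \iff 0\in Au,
\end{equation*}
where the second equivalence uses that $(M+A)^{-1}$ is single-valued and full-domain (so that $u$ equals the unique element of $(M+A)^{-1}(Mu)$ iff $Mu\in(M+A)u$). In particular this re-proves $\Fix T=\zer A$, and it is the bridge that lets me pass freely between the equation $Tu=u$ and the inclusion $0\in Au$.

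For (i), given $u\in\Fix T$ I would just compute
\begin{equation*}
\Tt(C^*u)=C^*(M+A)^{-1}(CC^*)u=C^*(M+A)^{-1}Mu=C^*(Tu)=C^*u .
\end{equation*}
For (ii), given $w\in\Fix\Tt$, set $u:=(M+A)^{-1}Cw$; then $C^*u=C^*(M+A)^{-1}Cw=\Tt w=w$, which is the first assertion. To see $u\in\Fix T$, note that $u=(M+A)^{-1}Cw$ means $Cw\in(M+A)u=Mu+Au$; since $Cw=CC^*u=Mu$, this forces $0\in Au$, hence $u\in\Fix T$ by the preliminary fact. For (iii), given $u\in\Fix T$, part (i) gives $w:=C^*u\in\Fix\Tt$, and then $(M+A)^{-1}Cw=(M+A)^{-1}CC^*u=(M+A)^{-1}Mu=Tu=u$.

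The ``consequently'' part then falls out formally: (i) says $C^*$ maps $\Fix T$ into $\Fix\Tt$, (ii) says $(M+A)^{-1}C$ maps $\Fix\Tt$ into $\Fix T$, and (iii) (respectively the identity $w=C^*u$ established inside (ii)) shows that $(M+A)^{-1}C\circ C^*=\Id$ on $\Fix T$ (respectively $C^*\circ(M+A)^{-1}C=\Id$ on $\Fix\Tt$); hence the two restricted maps are mutually inverse bijections, and \cref{e:FixTtilde} is immediate from their surjectivity. I do not expect a genuine obstacle here — the only things to watch are to apply $CC^*=M$ and $C^*(M+A)^{-1}C=\Tt$ in the correct space at each step, and to keep the single-valuedness/full-domain of $(M+A)^{-1}$ explicit, since that is precisely what makes the equivalence between $Tu=u$ and $0\in Au$ reversible.
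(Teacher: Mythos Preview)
Your proof is correct and follows essentially the same approach as the paper: direct unwinding of the identities $M=CC^*$, $T=(M+A)^{-1}M$, and $\Tt=C^*(M+A)^{-1}C$. The only cosmetic difference is in part~(ii), where the paper verifies $u\in\Fix T$ by computing $Tu=(M+A)^{-1}C\Tt w=(M+A)^{-1}Cw=u$ directly, whereas you pass through the equivalence $\Fix T=\zer A$ and check $0\in Au$; both amount to the same two-line calculation.
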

\begin{proof}
\cref{p:FixTtilde1}: 
Let $u\in\Fix T$ and set $w := C^*u$. 
Recalling \cref{e:defM}, we thus have the implications
$u=Tu=(M+A)^{-1}Mu= (M+A)^{-1}CC^*u$
$\Rightarrow$
$w = C^*u = (C^*(M+A)^{-1}C)(C^*u)= \Tt w$
$\Leftrightarrow$
$w\in\Fix \Tt$. 

\cref{p:FixTtilde2}: 
Let $w\in\Fix\Tt$. 
Clearly, $w\in\ran \Tt \subseteq \ran C^*$. 
Hence $Cw\in\ran CC^* = \ran M$ and so $(A+M)^{-1}Cw$ is single-valued. 
Next,
\begin{align*}
Tu 
&= T(M+A)^{-1}Cw \tag{by definition of $u$}\\
&= (M+A)^{-1}M(M+A)^{-1}Cw \tag{by \cref{e:defT}}\\
&= (M+A)^{-1}CC^*(M+A)^{-1}Cw \tag{by \cref{e:defM}}\\
&= (M+A)^{-1}C\Tt w \tag{by \cref{e:defTt}}\\
&= (M+A)^{-1}Cw \tag{because $w\in\Fix\Tt$}\\
&= u \tag{by definition of $u$}
\end{align*}
and 
so $u\in\Fix T$. 
Moreover, 
$
C^*u = C^*(M+A)^{-1}Cw=\Tt w = w$ as claimed.

\cref{p:FixTtilde3}: 
Let $u\in\Fix T$. 
By \cref{p:FixTtilde1}, $w\in\Fix\Tt$
and $(M+A)^{-1}Cw = (M+A)^{-1}CC^*u=(M+A)^{-1}Mu =
Tu = u$ as announced. 

Finally, \cref{p:FixTtilde1}--\cref{p:FixTtilde3} 
yield the ``Consequently'' part.
\end{proof}

\begin{fact}
\label{f:buried}
Consider the PPP algorithm.
If the parameter sequence $(\lambda_k)_\kkk$ satisfies \cref{e:stayaway}, 
then $u_k-Tu_k\to 0$.  
\end{fact}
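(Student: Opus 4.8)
The plan is to move the question from the PPP iterate $(u_k)_\kkk$ in $H$ to the rPPP iterate $(w_k)_\kkk$ in $D$, where the driving operator $\Tt$ --- in contrast to $T$ --- is \emph{genuinely} firmly nonexpansive, prove there that the successive differences vanish, and then carry this back to $(u_k)_\kkk$ via \cref{f:known}\cref{f:known3} and the Lipschitz hypothesis \cref{e:rfk}. As a preliminary, I would record that \cref{e:stayaway} supplies some $\varepsilon\in\zeroun$ with $\lambda_k\in[\varepsilon,2-\varepsilon]$ for all \kkk, hence $|1-\lambda_k|\le 1-\varepsilon$ and $\lambda_k(2-\lambda_k)\ge\varepsilon^2$, so in particular $\sum_\kkk\lambda_k(2-\lambda_k)=\pinf$.

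For the $D$-side, first note $\Fix\Tt\ne\emp$: indeed $\Fix T=\zer A\ne\emp$ by the standing assumption, and $\Fix\Tt=C^*(\Fix T)$ by \cref{p:FixTtilde}. Since $\Tt$ is firmly nonexpansive, I would write $\Tt=\thalb(\Id+R)$ with $R:=2\Tt-\Id$ nonexpansive and $\Fix R=\Fix\Tt$, so that \cref{e:genw} is the Krasnosel'ski{\u\i}--Mann iteration $w_{k+1}=w_k+\tfrac{\lambda_k}{2}(Rw_k-w_k)$ for $R$ with parameters $\lambda_k/2\in[0,1]$ obeying $\sum_\kkk\tfrac{\lambda_k}{2}\bigl(1-\tfrac{\lambda_k}{2}\bigr)=\tfrac14\sum_\kkk\lambda_k(2-\lambda_k)=\pinf$. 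The standard Krasnosel'ski{\u\i}--Mann theorem (see, e.g., \cite[Theorem~5.15]{BC2017}) then gives $Rw_k-w_k\to 0$, i.e.\ $\Tt w_k-w_k\to 0$, and hence $w_k-w_{k+1}=\lambda_k(w_k-\Tt w_k)\to 0$.

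To transfer back to $H$, I would use \cref{f:known}\cref{f:known3}, which states that $Tu_k=(M+A)^{-1}Cw_k$ for every \kkk. Letting $L$ be a Lipschitz constant of $(M+A)^{-1}$ from \cref{e:rfk} and using boundedness of $C$,
\begin{equation*}
\norm{Tu_k-Tu_{k+1}}=\norm{(M+A)^{-1}Cw_k-(M+A)^{-1}Cw_{k+1}}\le L\norm{C}\,\norm{w_k-w_{k+1}},
\end{equation*}
so that $\beta_k:=\norm{Tu_k-Tu_{k+1}}\to 0$ (because $w_k-w_{k+1}\to 0$). Combining the elementary identity $u_{k+1}-Tu_{k+1}=(1-\lambda_k)(u_k-Tu_k)+(Tu_k-Tu_{k+1})$ (immediate from \cref{e:genu}) with $|1-\lambda_k|\le 1-\varepsilon$, I get, for $a_k:=\norm{u_k-Tu_k}$, the recursion $a_{k+1}\le(1-\varepsilon)a_k+\beta_k$. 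Since $1-\varepsilon<1$ and $\beta_k\to 0$, iterating this first shows $(a_k)_\kkk$ is bounded (say by $a_0+\varepsilon^{-1}\sup_j\beta_j$) and then, passing to $\limsup$, that $\limsup_k a_k\le(1-\varepsilon)\limsup_k a_k$, which forces $a_k\to 0$; this is the claim.

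The main obstacle is precisely that $T$ itself need not be firmly nonexpansive (see the footnote to \cref{e:defT}), so one cannot run the Krasnosel'ski{\u\i}--Mann machinery directly on $(u_k)_\kkk$; detouring through the firmly nonexpansive $\Tt$ is the device that resolves this. I also expect that the purely ``$M$-firm'' nonexpansiveness of $T$ by itself only delivers $\norm{C^*(u_k-Tu_k)}\to 0$, which is strictly weaker than what is wanted whenever $\ker C^*\ne\{0\}$ --- so the role of the Lipschitz assumption \cref{e:rfk}, via \cref{f:known}\cref{f:known3}, is exactly to upgrade that seminorm statement to a full-norm statement.
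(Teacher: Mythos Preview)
Your argument is correct. The paper itself does not supply a proof here; it simply points to the second half of \cite[Proof of Theorem~2.9]{BCLN}. What you have written is a self-contained version of essentially the same mechanism: in \cite{BCLN} one works directly with the $M$-seminorm on $H$ (using $M$-firm nonexpansiveness of $T$ to get $\|u_k-Tu_k\|_M\to 0$), whereas you equivalently pass to $D$ and the genuinely firmly nonexpansive $\Tt$ via \cref{f:known}\cref{f:known4} (note $w_k-\Tt w_k=C^*(u_k-Tu_k)$, so $\|w_k-\Tt w_k\|=\|u_k-Tu_k\|_M$). The upgrade from the seminorm statement to the full-norm statement --- via the Lipschitz hypothesis \cref{e:rfk}, the identity $Tu_k=(M+A)^{-1}Cw_k$, and the contraction recursion $a_{k+1}\le(1-\varepsilon)a_k+\beta_k$ --- is exactly the device used in \cite{BCLN}; your explicit observation that $M$-firmness alone only yields $\|C^*(u_k-Tu_k)\|\to 0$ nicely isolates why \cref{e:rfk} is needed.
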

\begin{proof}
See the second half of \cite[Proof of Theorem~2.9]{BCLN}. 
\end{proof}

Utilized in the work by Bredies et al.\ is the \emph{seminorm}
\begin{equation}
\label{e:semi}
\|x\|_M = \sqrt{\scal{x}{Mx}}= \|C^*x\|
\end{equation}
on $H$ 
which allows for some elegant formulations; 
see in particular \cite[Section~2]{BCLN}.

\subsection{A strong convergence result}

\begin{fact}[Baillon-Bruck-Reich, 1978]
\label{f:BBR}
Let $N\colon D\to D$ be a linear nonexpansive mapping such that 
$N^kx_0-N^{k+1}x_0\to 0$ for every $x_0\in D$. 
Then $(N^kx_0)_\kkk$ converges \emph{strongly} to a
fixed point of $N$. 
\end{fact}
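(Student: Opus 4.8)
The plan is to use the linearity of $N$ to split $D$ orthogonally into the fixed-point subspace of $N$ and a complementary subspace on which the iterates are annihilated in the limit, and then to treat the two pieces separately.

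First I would record the standard facts that, being linear and nonexpansive, $N$ is a bounded operator with $\|N\|\le 1$, and likewise $\|N^*\|\le 1$. The crucial algebraic observation is that $\Fix N=\ker(\Id-N)=\ker(\Id-N^*)$: if $Nx=x$ then $\|x\|^2=\scal{Nx}{x}=\scal{x}{N^*x}\le\|x\|\,\|N^*x\|\le\|x\|^2$, so Cauchy--Schwarz holds with equality and $\|N^*x\|=\|x\|$, which forces $N^*x=x$; the reverse inclusion is symmetric. Since $\ker(\Id-N^*)=\bigl(\ran(\Id-N)\bigr)^\perp$, this yields $\overline{\ran(\Id-N)}=(\Fix N)^\perp$ and hence the orthogonal decomposition $D=\Fix N\oplus\overline{\ran(\Id-N)}$.

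Next, fix $x_0\in D$ and write $x_0=p+z$, where $p$ is the orthogonal projection of $x_0$ onto $\Fix N$ and $z\in\overline{\ran(\Id-N)}$. On the one hand, $N^kp=p$ for every $k$. On the other hand, when $z$ has the special form $z=(\Id-N)y$ we get $N^kz=N^ky-N^{k+1}y\to 0$ directly from the hypothesis applied to the starting point $y$; and for a general $z\in\overline{\ran(\Id-N)}$ and any $\varepsilon>0$, choosing $y$ with $\|z-(\Id-N)y\|<\varepsilon$ and using $\|N^k\|\le 1$ gives
\[
\|N^kz\|\le\|z-(\Id-N)y\|+\|N^k(\Id-N)y\|<\varepsilon+\|N^k(\Id-N)y\|,
\]
so that $\limsup_k\|N^kz\|\le\varepsilon$; as $\varepsilon$ is arbitrary, $N^kz\to 0$. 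Adding the two contributions, $N^kx_0=N^kp+N^kz\to p\in\Fix N$, which proves strong convergence and moreover identifies the limit as the orthogonal projection of $x_0$ onto $\Fix N$.

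The one step requiring care is the orthogonal decomposition: it hinges on the fact that for a Hilbert-space contraction the fixed-point sets of $N$ and $N^*$ coincide, which is exactly what makes $\Fix N$ and $\overline{\ran(\Id-N)}$ orthogonal complements and thus lets an arbitrary $x_0$ be split as above. Everything else is a routine $\varepsilon$-approximation relying only on power-boundedness; the hypothesis $N^kx_0-N^{k+1}x_0\to 0$ enters precisely to kill the $\overline{\ran(\Id-N)}$-component (without it $N^kx_0$ need not converge at all, e.g.\ for a rotation).
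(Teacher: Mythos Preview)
Your proof is correct. The orthogonal decomposition $D=\Fix N\oplus\overline{\ran(\Id-N)}$ via $\Fix N=\Fix N^*$ is exactly the right tool, and the $\varepsilon$-approximation argument to pass from $\ran(\Id-N)$ to its closure is clean.

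As for comparison: the paper does not actually give a proof of this fact --- it simply cites the original Baillon--Bruck--Reich paper \cite[Theorem~1.1]{BBR} and the textbook reference \cite[Theorem~5.14(ii)]{BC2017}. Your argument is essentially the classical one underlying those references (the mean-ergodic-type decomposition for Hilbert-space contractions), so you have supplied a self-contained proof where the paper defers to the literature. One small bonus of your write-up is that it explicitly identifies the limit as $P_{\Fix N}x_0$, which the statement of the fact does not record but which the paper later uses (see \cref{t:sconv} and the reference to \cite[Proposition~5.28]{BC2017}).
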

\begin{proof}
See the original \cite[Theorem~1.1]{BBR}, 
or \cite[Theorem~5.14(ii)]{BC2017}. 
\end{proof}

\subsection{From Genel-Lindenstrauss to maximally monotone operators}

It will be convenient to extend and repackage \cref{f:GL} as follows:

\begin{proposition}
\label{p:GL}
There exists a maximally monotone operator 
$B$ on $\ell^2$, with bounded domain, and a starting point $s_0\in \ell^2$ 
such that 
the sequence $(J_{B}^ks_0)_\kkk$ converges weakly 
--- but not strongly --- to some point $\bar{s}\in \zer B$. 
\end{proposition}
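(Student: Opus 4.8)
The plan is to realize $B$ as a bounded-domain truncation of the maximally monotone operator whose resolvent is the averaged map $\thalb\Id+\thalb N$ attached to the Genel--Lindenstrauss operator $N$ of \cref{f:GL}. First I would extend $N\colon S\to S$ to a nonexpansive self-map of $\ell^2$ by composing with the metric projection onto $S$: set $\hat N:=N\circ P_S$, which is nonexpansive (since $P_S$ is $1$-Lipschitz and $N$ is nonexpansive on $S$) and restricts to $N$ on $S$. Then $T:=\thalb\Id+\thalb\hat N$ has full domain and is firmly nonexpansive (because $2T-\Id=\hat N$ is nonexpansive), so by the correspondence between firmly nonexpansive mappings and resolvents, $B_0:=T^{-1}-\Id$ is maximally monotone with $J_{B_0}=T$ and $\dom B_0=\ran T$. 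I would also record two facts about the weak limit $\bar s$ from \cref{f:GL}: since $S$ is closed and convex it is weakly closed, so $\bar s\in S$; and since $N\bar s=\bar s$ with $\bar s\in S$ we get $T\bar s=\bar s$, hence $\bar s\in\Fix T\subseteq\ran T=\dom B_0$.

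The operator $B_0$ typically has unbounded domain (its resolvent behaves like $\thalb\Id$ far out), so the essential step is to truncate. I would fix $r>0$ with $S\subseteq\{x\mid\|x\|<r\}$, which is possible because $S$ is bounded, set $V:=\{x\mid\|x\|\le r\}$, and define $B:=B_0+N_V$, where $N_V$ is the normal cone operator of $V$. Since $\bar s\in S\cap\dom B_0\subseteq\inte V\cap\dom B_0$, this intersection is nonempty, so Rockafellar's sum theorem yields that $B$ is maximally monotone; moreover $\dom B\subseteq\dom N_V=V$ is bounded, as required.

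Next I would check that the resolvent iteration of $B$ from $s_0$ reproduces the Genel--Lindenstrauss sequence. For any $x\in S$, convexity of $S$ gives $Tx\in S\subseteq\inte V$, so $N_V(Tx)=\{0\}$ and therefore $x-Tx\in B_0(Tx)=B(Tx)$; since $B$ is monotone, $J_B$ is single-valued, whence $J_Bx=Tx$ for every $x\in S$. As $s_0\in S$ and $T$ maps $S$ into $S$, a one-line induction gives $J_B^ks_0=s_k$ for all $k$, where $(s_k)_\kkk$ is the sequence of \cref{f:GL}. Finally $\bar s\in S$ gives $J_B\bar s=T\bar s=\bar s$, i.e.\ $\bar s\in\Fix J_B=\zer B$. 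Hence $(J_B^ks_0)_\kkk=(s_k)_\kkk$ converges weakly but not strongly to $\bar s\in\zer B$, which is the claim.

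The only genuinely delicate point, and the one I would be most careful about, is the boundedness of $\dom B$: passing from the natural operator $B_0$ (whose resolvent has unbounded range) to $B_0+N_V$ must be arranged so that it neither destroys maximal monotonicity --- handled by the sum theorem together with $\bar s\in\inte V\cap\dom B_0$ --- nor perturbs the orbit --- handled by the observation that $N_V$ vanishes on $S$, where the entire orbit (and its limit) lives. Everything else is routine bookkeeping with resolvents and firmly nonexpansive mappings.
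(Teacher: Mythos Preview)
Your proof is correct, but it follows a genuinely different route from the paper's. The paper avoids the truncation step altogether: instead of extending $N$ by composition with $P_S$ and then averaging (which, as you note, yields a firmly nonexpansive $T$ with unbounded range), it first averages on $S$ to obtain $F:=\thalb\Id+\thalb N\colon S\to S$, and then invokes Bauschke's firmly nonexpansive extension theorem (\cite[Corollary~5]{HBext}) to produce a firmly nonexpansive $\widetilde F\colon\ell^2\to S$ with $\widetilde F|_S=F$. The point is that $\ran\widetilde F\subseteq S$, so setting $B:=\widetilde F^{-1}-\Id$ immediately gives $\dom B=\ran\widetilde F\subseteq S$ bounded, with no normal-cone surgery or sum theorem required. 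Your construction, by contrast, uses only the elementary extension $N\circ P_S$ and thereby sidesteps the deeper extension result, but must then repair the unbounded domain of $B_0$ by adding $N_V$ and appealing to Rockafellar's sum theorem. Both arguments are clean; the paper's trades an elementary extension for a subtler one in order to get boundedness for free, while yours keeps the extension trivial at the cost of one extra (routine) step.
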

\begin{proof}
Let $S$, $N$, and $s_0$ be as in \cref{f:GL}. 
(We recall that $\Fix N\neq\varnothing$ by the fixed point theorem of 
Browder-G\"ohde-Kirk; see, e.g., \cite[Theorem~4.29]{BC2017}.)
The operator $F := \thalb\Id+\thalb N\colon S\to S$ is firmly nonexpansive, and $(F^ks_0)_\kkk$ converges weakly 
--- but not strongly --- to some point in $\Fix N = \Fix F$. 
By \cite[Corollary~5]{HBext}, there exists 
an extension $\widetilde{F}\colon \ell^2\to S$ such that 
$\widetilde{F}$ is firmly nonexpansive and 
$\widetilde{F}|_S = F$. 
On the one hand, $\Fix F \subseteq \fix\widetilde{F}$.
On the other hand, if $x\in \Fix\widetilde{F}$,
 then $x=\widetilde{F}x \in\ran\widetilde{F}\subseteq S$ 
and so $x=\widetilde{F}|_Sx = Fx$, i.e., $x\in\Fix F$.
Altogether, 
$\varnothing\neq \Fix F = \Fix \widetilde{F} \subseteq S$.
Finally, let $B := \widetilde{F}^{-1}-\Id$. 
Then $\dom B = \ran \widetilde{F} \subseteq S$ is bounded, 
$B$ is maximally monotone on $\ell^2$, 
$\varnothing \neq \zer B = \Fix \widetilde{F}$, 
and $J_{B} = \widetilde{F}$. 
The result thus follows from \cref{f:GL}. 
\end{proof}

\section{The limits and the $M$-projection}
\label{sec:limits}

\subsection{The limits}

\label{sec:limitslimits}

In this section, we identify the limits $w^*$ and $u^*$ of the rPPP and PPP algorithms (see \cref{f:known}) provided that additional assumptions are satisfied. 
Our result rests on the following relationship
between $P_{\Fix \Tt}$ and the
projection onto $\Fix T$ with respect to the 
seminorm from \cref{e:semi}.

\begin{theorem}[projecting onto $\Fix\Tt$ and $\Fix T$]
\label{t:krista}
Let $u_0\in H$ and assume that $w_0 = C^*u_0$. 
Then the following hold:
\begin{enumerate}
\item
\label{t:krista1}
If $\widehat{w} := P_{\Fix\Tt}(w_0)$, 
then $\widehat{u} := (M+A)^{-1}C\widehat{w}$ is an $M$-projection of $u_0$ onto 
$\Fix T$ in the sense that 
\begin{equation}
(\forall x\in\Fix T)\quad 
\|u_0-\widehat{u}\|_M \leq \|u_0-x\|_M.
\end{equation}
\item
\label{t:krista2}
If $\widehat{u}\in \Fix T$ 
is an $M$-projection of $u_0$ onto 
$\Fix T$ in the sense that 
$(\forall x\in \Fix T)$
$\|u_0-\widehat{u}\|_M \leq \|u_0-x\|_M$, 
then 
$\widehat{w} := C^*\widehat{u}$ is equal to 
$P_{\Fix\Tt}(w_0)$. 
\item
\label{t:krista3}
If $\widehat{u_1},\widehat{u_2}$ belong to $\Fix T$ and are both $M$-projections 
of $u_0$ onto $\Fix T$, then $\widehat{u_1}=\widehat{u_2}$. 
\end{enumerate}
\end{theorem}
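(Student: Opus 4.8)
The plan is to reduce the entire statement to one elementary identity together with the bijection already recorded in \cref{p:FixTtilde}. Recall from \cref{e:semi} that $\|x\|_M=\|C^*x\|$ for every $x\in H$; hence, using the assumption $w_0=C^*u_0$, we have $\|u_0-x\|_M=\|C^*u_0-C^*x\|=\|w_0-C^*x\|$ for every $x\in H$. By \cref{p:FixTtilde}, the map $C^*$ restricts to a bijection from $\Fix T$ onto $\Fix\Tt$ with inverse $(M+A)^{-1}C$; in particular, as $x$ runs over $\Fix T$, the point $C^*x$ runs over all of $\Fix\Tt$. Therefore the $M$-distance from $u_0$ to $\Fix T$ equals the ordinary distance from $w_0$ to $\Fix\Tt$, and a point $\widehat u\in\Fix T$ is an $M$-projection of $u_0$ onto $\Fix T$ precisely when $C^*\widehat u$ is a metric projection of $w_0$ onto $\Fix\Tt$. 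At the outset I would also record that $P_{\Fix\Tt}(w_0)$ is a well-defined single point: $\Fix\Tt=C^*(\Fix T)$ is nonempty because $\Fix T=\zer A\neq\varnothing$, and it is closed and convex because $\Tt$ is firmly nonexpansive.

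For \cref{t:krista1}, set $\widehat w:=P_{\Fix\Tt}(w_0)$ and $\widehat u:=(M+A)^{-1}C\widehat w$. By \cref{p:FixTtilde}, $\widehat u\in\Fix T$ and $C^*\widehat u=\widehat w$. For any $x\in\Fix T$ we have $C^*x\in\Fix\Tt$, so the identity above together with the defining property of $\widehat w$ gives $\|u_0-\widehat u\|_M=\|w_0-\widehat w\|\leq\|w_0-C^*x\|=\|u_0-x\|_M$, which is the asserted inequality.

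For \cref{t:krista2}, given an $M$-projection $\widehat u$ of $u_0$ onto $\Fix T$, put $\widehat w:=C^*\widehat u\in\Fix\Tt$ (by \cref{p:FixTtilde}). For an arbitrary $w\in\Fix\Tt$, write $w=C^*x$ where $x:=(M+A)^{-1}Cw\in\Fix T$ (again \cref{p:FixTtilde}); then $\|w_0-\widehat w\|=\|u_0-\widehat u\|_M\leq\|u_0-x\|_M=\|w_0-w\|$. Thus $\widehat w$ minimizes $\|w_0-\cdot\|$ over $\Fix\Tt$, and uniqueness of the projection onto the nonempty closed convex set $\Fix\Tt$ forces $\widehat w=P_{\Fix\Tt}(w_0)$. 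Finally, \cref{t:krista3} follows immediately: if $\widehat{u_1},\widehat{u_2}$ are both $M$-projections of $u_0$ onto $\Fix T$, then \cref{t:krista2} yields $C^*\widehat{u_1}=P_{\Fix\Tt}(w_0)=C^*\widehat{u_2}$, and the injectivity of $C^*$ restricted to $\Fix T$ (part of \cref{p:FixTtilde}) gives $\widehat{u_1}=\widehat{u_2}$.

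I do not expect a genuine obstacle: the result is essentially a bookkeeping consequence of \cref{p:FixTtilde} and the identity $\|u_0-x\|_M=\|w_0-C^*x\|$. The only two points requiring a line of care are (a) that $P_{\Fix\Tt}$ is well defined (nonemptiness, closedness, convexity, single-valuedness), and (b) that $(M+A)^{-1}C$ may legitimately be applied to points of $\Fix\Tt\subseteq\ran C^*$, which is where single-valuedness of $(A+M)^{-1}$ on $\ran M$ is used — and both of these have already been dispatched inside \cref{p:FixTtilde} and its proof.
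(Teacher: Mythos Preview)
Your proof is correct and follows essentially the same route as the paper: reduce everything to the identity $\|u_0-x\|_M=\|w_0-C^*x\|$ and the bijection $C^*\colon\Fix T\to\Fix\Tt$ from \cref{p:FixTtilde}. The only cosmetic difference is in \cref{t:krista3}: the paper re-derives injectivity of $C^*|_{\Fix T}$ by an explicit computation (writing $\widehat{u_2}=\widehat{u_1}+k$ with $k\in\ker C^*$ and showing $T\widehat{u_2}=T\widehat{u_1}$), whereas you invoke the bijectivity already recorded in \cref{p:FixTtilde}; your version is slightly cleaner but not substantively different.
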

\begin{proof}
\cref{t:krista1}:
Let $x\in\Fix T$ and set 
$y:=C^*x$.
Then $y\in\Fix\Tt$ by \cref{p:FixTtilde}\cref{p:FixTtilde1}. 
Hence $\|w_0-\widehat{w}\|\leq\|w_0-y\|$. 
On the other hand,
$w_0=C^*u_0$ (by assumption),  
$\widehat{w}=C^*\widehat{u}$ 
and  $\widehat{u}\in \Fix T$
(by \cref{p:FixTtilde}\cref{p:FixTtilde2}).  
Altogether, we obtain 
$\|C^*u_0-C^*\widehat{u}\|\leq\|C^*u_0-C^*x\|$ or 
$\|C^*(u_0-\widehat{u})\|\leq\|C^*(u_0-x)\|$. 
The conclusion now follows from 
\cref{e:semi}.

\cref{t:krista2}:
By \cref{p:FixTtilde}\cref{p:FixTtilde1},
we have $\widehat{w}\in\Fix\Tt$. 
Let $y\in\Fix\Tt$ and set
$x := (M+A)^{-1}Cy$.
By \cref{p:FixTtilde}\cref{p:FixTtilde2},
we have $x\in\Fix T$ and $y=C^*x$. 
Moreover, 
$C^*(u_0-x)=C^*u_0-C^*x=w_0-y$ and 
$C^*(u_0-\widehat{u})= C^*u_0-C^*\widehat{u}=w_0-\widehat{w}$.
The assumption that 
$\|u_0-\widehat{u}\|_M\leq\|u_0-x\|_M$ turns, 
in view of \cref{e:semi} and the above, into 
$\|w_0-\widehat{w}\|\leq \|w_0-y\|$. 
Hence $\widehat{w} = P_{\Fix \Tt}(w_0)$.

\cref{t:krista3}:
By \cref{t:krista2}, 
$C^*\widehat{u_1} = P_{\Fix\Tt}(w_0)=
C^*\widehat{u_2}$.
Hence $\widehat{u_2} = \widehat{u_1}+k$,
where $k\in\ker C^*$. 
It follows that
\begin{align}
\widehat{u_2} &= T\widehat{u_2} \tag{because $\widehat{u_2}\in\Fix T$}\\
&=(A+M)^{-1}(CC^*)(\widehat{u_1}+k)
\tag{by \cref{e:defT} and \cref{e:defM}}\\
&=(A+M)^{-1}(CC^*\widehat{u_1}+CC^*k)\tag{$CC^*$ is linear}\\
&=(A+M)^{-1}(M\widehat{u_1}+0) \tag{by \cref{e:defM} and because $k\in\ker C^*$}\\
&=(A+M)^{-1}M\widehat{u_1}\notag\\
&=T\widehat{u_1}\tag{by \cref{e:defT}}\\
&=\widehat{u_1}\tag{because $\widehat{u_1}\in\Fix T$}
\end{align}
and we are done. 
\end{proof}

In view of \cref{t:krista}, the following notion is now well defined:

\begin{definition}[$M$-projection onto $\Fix T$]
\label{d:Mproj}
For every $u_0\in H$, there exists a unique point 
$\widehat{u}\in\Fix T$ such that 
$(\forall x\in \Fix T)$
$\|u_0-\widehat{u}\|_M \leq \|u_0-x\|_M$. 
The point $\widehat{u}$ is called the 
\emph{$M$-projection of $u_0$ onto $\Fix T$}, 
written also as $P_{\Fix T}^M(u_0)$. 
\end{definition}

\begin{remark}
It is the special structure of $\Fix T$ that allows us 
to introduce the single-valued and full-domain operator 
$P_{\Fix T}^M$. For more general sets, 
$M$-projections either may not exist or they may not be a singleton
--- see \cref{sss:bizarre} below.
\end{remark}

We are now ready for a nice sufficient condition that 
allows for the identification of the weak limit $w^*$ of the sequence generated by the rPPP algorithm: 

\begin{theorem}[when $\Fix\Tt$ is affine]
\label{t:w*}
Suppose that $\Fix \Tt$ is an affine subspace of $D$. 
Then 
\begin{equation}
\label{e:w*}
w^*= P_{\Fix\Tt}(w_0). 
\end{equation}
\end{theorem}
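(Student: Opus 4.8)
The plan is to exploit the fact that the rPPP sequence $(w_k)_\kkk$ is an orbit of the firmly nonexpansive operator $\Tt$ (relaxed by the parameters $\lambda_k$) and that such orbits are \emph{Fej\'er monotone} with respect to $\Fix\Tt$. Since $\Fix\Tt$ is assumed to be an affine subspace, its metric projection $P_{\Fix\Tt}$ is an affine (in particular, continuous and weak-to-weak continuous) map. The key classical principle I would invoke is: if $(w_k)$ is Fej\'er monotone with respect to a closed convex set $E$ and $w_k\weakly w^*\in E$, and if moreover $P_E(w_k)$ is constant along the iteration, then that constant value must equal $P_E(w_0)$ and also equal $w^*$ when $E$ is affine. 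Concretely, I would proceed in three steps.

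First, I would record that the shadow sequence $(P_{\Fix\Tt}w_k)_\kkk$ is constant. This is where affineness is used: for an affine subspace $E = a + V$ (with $V$ a closed linear subspace and $P_E = a + P_V(\cdot - a)$), one checks that $P_E$ commutes with the relaxed iteration $w_{k+1} = (1-\lambda_k)w_k + \lambda_k\Tt w_k$ in the sense that $P_E(\Tt w_k) = P_E(w_k)$. Indeed $\Tt w - P_E w \perp V$ for every $w$: this follows because $\Tt$ is firmly nonexpansive with $\Fix\Tt = E$, so for any $e\in E$ one has $\scal{\Tt w - e}{(\Id - \Tt)w}\ge 0$, and applying this with $e = P_E w \pm v$ for arbitrary $v\in V$ forces $(\Id-\Tt)w \perp V$, whence $\Tt w$ and $w$ have the same projection onto $E$. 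Therefore $P_E(w_{k+1}) = (1-\lambda_k)P_E(w_k) + \lambda_k P_E(\Tt w_k) = P_E(w_k)$, so $P_{\Fix\Tt}(w_k) = P_{\Fix\Tt}(w_0)$ for all $k$.

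Second, I would pass to the limit. By \cref{f:known}\cref{f:known2}, $w_k \weakly w^* \in \Fix\Tt = E$. Since $P_E$ is affine and bounded, it is weakly continuous, so $P_E(w_k) \weakly P_E(w^*)$. But $w^*\in E$ gives $P_E(w^*) = w^*$, and by Step~1 the sequence $P_E(w_k)$ is the constant $P_{\Fix\Tt}(w_0)$. A weakly convergent constant sequence has that constant as its limit, so $w^* = P_{\Fix\Tt}(w_0)$, which is \cref{e:w*}.

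The main obstacle — really the only nontrivial point — is Step~1, establishing that the projection onto $\Fix\Tt$ is invariant under $\Tt$; everything else is routine. This rests squarely on firm nonexpansiveness of $\Tt$ (guaranteed after \cref{e:defTt}, since $C^*\tr A$ is maximally monotone) together with the affine structure of the fixed point set, which upgrades the usual one-sided inequality $\scal{\Tt w - e}{(\Id-\Tt)w}\ge 0$ into the orthogonality $(\Id - \Tt)w \perp \parl(\Fix\Tt)$. An alternative, and perhaps cleaner, route to the same conclusion is to note that $(w_k)$ is Fej\'er monotone with respect to $E$, that $\|w_k - P_E w_0\|$ is then decreasing, and to combine weak convergence of $(w_k)$ with the projection-invariance to pin down the limit; but the commutation argument above is the most direct.
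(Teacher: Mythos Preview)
Your proof is correct. The paper's own argument is a two-line citation: it notes that $(w_k)_\kkk$ is Fej\'er monotone with respect to $\Fix\Tt$ (by \cite[Corollary~5.17(i)]{BC2017}), recalls the weak convergence $w_k\weakly w^*\in\Fix\Tt$ from \cref{f:known}\cref{f:known2}, and then invokes \cite[Proposition~5.9(ii)]{BC2017}, which says exactly that a Fej\'er-monotone sequence with respect to a closed affine set that converges weakly to a point of that set must converge to the projection of the initial iterate.

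Your route is a mild variation rather than a genuinely different argument: instead of passing through Fej\'er monotonicity and citing the black-box proposition, you extract the underlying mechanism directly from firm nonexpansiveness of $\Tt$, namely the orthogonality $(\Id-\Tt)w\perp\parl(\Fix\Tt)$, which yields invariance of the shadow $P_{\Fix\Tt}w_k$ and then the identification of the weak limit via weak continuity of the affine projection. This is precisely what lives inside \cite[Proposition~5.9(ii)]{BC2017}, so the two proofs are the same in substance; yours is self-contained, the paper's is more economical.
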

\begin{proof}
By \cite[Corollary~5.17(i)]{BC2017},
the sequence $(w_k)_\kkk$ is Fej\'er monotone with respect to $\Fix\Tt$.
From \cref{f:known}\cref{f:known2}, 
we know that $(w_k)_\kkk$ converges weakly to 
$w^*\in\Fix\Tt$.
Therefore, \cite[Proposition~5.9(ii)]{BC2017} yields
that $(w_k)_\kkk$ converges weakly to 
$P_{\Fix\Tt}(w_0)$.
Altogether, we deduce \cref{e:w*}. 
\end{proof}

\begin{remark}[linear relations]
\label{r:linrels}
Suppose that our given operator $A$ is actually 
a \emph{linear relation}, i.e., its graph 
is a linear subspace of $H\times H$. 
(For more on linear relations, we recommend 
Cross's monograph \cite{Cross} 
and Yao's doctoral thesis \cite{Yao}.)
Then $\Tt$ (see \cref{e:defTt}) is actually a
firmly nonexpansive linear operator; consequently,
its fixed point set $\Fix \Tt$ is a linear subspace 
and \cref{t:w*} is applicable.
Similar comments hold for the case when 
$A$ is an affine relation, i.e., its graph is an affine subspace of $H\times H$. 
\end{remark}

When the weak limit $w^*$ of the rPPP sequence $(w_k)_\kkk$ 
is $P_{\Fix \Tt}(w_0)$, 
then we are able to identify 
the weak limit $u^*$ of
the PPP sequence $(Tu_k)_\kkk$ as $P_{\Fix T}^M(u_0)$: 

\begin{theorem}[$P_{\Fix T}^M$ from $P_{\Fix\Tt}$]
\label{t:u*}
Suppose that $w^* = P_{\Fix\Tt}(w_0)$. 
Then 
\begin{equation}
\label{e:u*}
u^* = P_{\Fix T}^M(u_0). 
\end{equation}
\end{theorem}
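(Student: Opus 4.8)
The plan is to leverage the dictionary between $\Fix T$ and $\Fix\Tt$ established in \cref{p:FixTtilde} together with the characterization of the $M$-projection in \cref{t:krista}. We already know from \cref{f:known}\cref{f:known1} that $(Tu_k)_\kkk$ converges weakly to $u^*\in\Fix T$, and from \cref{f:known}\cref{f:known5} that $u^* = (M+A)^{-1}Cw^*$. So the identification of $u^*$ reduces entirely to knowing $w^*$.

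First I would invoke the hypothesis $w^* = P_{\Fix\Tt}(w_0)$, which together with \crefpart{t:krista1}{t:krista1} immediately tells us that $(M+A)^{-1}Cw^*$ is \emph{an} $M$-projection of $u_0$ onto $\Fix T$: indeed, \cref{t:krista}\cref{t:krista1} says precisely that if $\widehat{w} := P_{\Fix\Tt}(w_0)$ then $\widehat{u} := (M+A)^{-1}C\widehat{w}$ satisfies $\|u_0 - \widehat{u}\|_M \le \|u_0 - x\|_M$ for all $x\in\Fix T$. Since $\widehat{u}$ is then, by \cref{d:Mproj} and the uniqueness in \cref{t:krista}\cref{t:krista3}, equal to $P_{\Fix T}^M(u_0)$, we obtain $(M+A)^{-1}Cw^* = P_{\Fix T}^M(u_0)$. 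Finally, combining this with $u^* = (M+A)^{-1}Cw^*$ from \cref{f:known}\cref{f:known5} yields \cref{e:u*}.

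In short, the whole argument is a two-line chain: $u^* \overset{\text{(known 5)}}{=} (M+A)^{-1}Cw^* \overset{\text{hypothesis}}{=} (M+A)^{-1}CP_{\Fix\Tt}(w_0) \overset{\text{\cref{t:krista1}}}{=} P_{\Fix T}^M(u_0)$, where the middle equality uses the hypothesis and the last uses \cref{t:krista}\cref{t:krista1} together with the well-definedness of the $M$-projection from \cref{d:Mproj}. I do not anticipate any real obstacle here; the substantive work was done in \cref{p:FixTtilde} and \cref{t:krista}, and this theorem is essentially a corollary that packages those facts with \cref{f:known}. The only point requiring a word of care is that $w_0 = C^*u_0$ is part of the standing setup for the PPP/rPPP pair (and is a hypothesis of \cref{t:krista}), so one should note that this compatibility condition is in force; under it, \cref{t:krista}\cref{t:krista1} applies verbatim.
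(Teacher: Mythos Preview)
Your proof is correct and follows essentially the same approach as the paper: both arguments chain together $u^*=(M+A)^{-1}Cw^*$ from \cref{f:known}\cref{f:known5} with the identification $(M+A)^{-1}CP_{\Fix\Tt}(w_0)=P_{\Fix T}^M(u_0)$ from \cref{t:krista}\cref{t:krista1}. Your explicit mention of uniqueness via \cref{t:krista}\cref{t:krista3} and \cref{d:Mproj} is a helpful clarification but not a departure from the paper's reasoning.
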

\begin{proof}
On the one hand, 
$(M+A)^{-1}Cw^*= (M+A)^{-1}CP_{\Fix\Tt}(w_0) = P_{\Fix T}^M(u_0)$
by \cref{t:krista}\cref{t:krista1}. 
On the other hand, 
$u^*=(M+A)^{-1}Cw^*$ by \cref{f:known}\cref{f:known5}. 
Altogether, we obtain the announced identity~\cref{e:u*}. 
\end{proof}

\subsection{On the notion of a general $M$-projection}
\label{s:limitsMproj}

\begin{proposition}
\label{p:jen}
Let $S$ be a nonempty subset of $H$, and let $h\in H$.
Then\footnote{Here we set $\Pi_S^M(h)=\argmin_{s\in S}\|h-s\|_M$ and $\Pi_R(d)=\argmin_{r\in R}\|d-r\|$. }
\begin{equation}
\Pi_S^M(h) = S \cap (C^*)^{-1}\big(\Pi_{C^*(S)}(C^*h)\big).
\end{equation}
\end{proposition}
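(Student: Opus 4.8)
The plan is to translate everything through the fundamental identity $\|x\|_M = \|C^*x\|$ from \cref{e:semi}. This converts the $M$-seminorm minimization defining $\Pi_S^M(h)$ into an ordinary norm minimization in $D$, and the point of the proposition is simply that minimizing $s\mapsto\|C^*h-C^*s\|$ over $s\in S$ is the same as minimizing $r\mapsto\|C^*h-r\|$ over $r\in C^*(S)$ and then pulling back into $S$ via $C^*$.

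Concretely, I would first record that for every $s\in S$ one has $\|h-s\|_M=\|C^*h-C^*s\|$, and that $\{C^*s : s\in S\}=C^*(S)$. Then I would prove the set equality by double inclusion. For the inclusion ``$\subseteq$'': let $\hat s\in\Pi_S^M(h)$; then $\hat s\in S$, and for an arbitrary $r\in C^*(S)$, writing $r=C^*s$ with $s\in S$, we get $\|C^*h-r\|=\|h-s\|_M\geq\|h-\hat s\|_M=\|C^*h-C^*\hat s\|$; hence $C^*\hat s$ minimizes $\|C^*h-\cdot\|$ over $C^*(S)$, i.e.\ $C^*\hat s\in\Pi_{C^*(S)}(C^*h)$, so $\hat s\in S\cap(C^*)^{-1}(\Pi_{C^*(S)}(C^*h))$. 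For the inclusion ``$\supseteq$'': let $\hat s\in S$ with $C^*\hat s\in\Pi_{C^*(S)}(C^*h)$; then for any $s\in S$ we have $C^*s\in C^*(S)$, whence $\|h-\hat s\|_M=\|C^*h-C^*\hat s\|\leq\|C^*h-C^*s\|=\|h-s\|_M$, so $\hat s\in\Pi_S^M(h)$.

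I would also note explicitly that the argument is valid without any proximinality hypothesis on $S$ or $C^*(S)$: if $\Pi_{C^*(S)}(C^*h)=\varnothing$ (for instance when $C^*(S)$ is not closed), then the right-hand side is $\varnothing$, and the ``$\subseteq$'' inclusion just proved forces $\Pi_S^M(h)=\varnothing$ as well; so the identity still holds as an equality of (possibly empty) sets.

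There is essentially no analytic obstacle here — the content is purely the bookkeeping identity $\|\cdot\|_M=\|C^*\cdot\|$ together with the fact that $\Pi$ denotes the full set of minimizers rather than a single point. The only point requiring a little care is making sure the equivalence of the two minimization problems is stated at the level of minimizer \emph{sets}, which is exactly what the double-inclusion argument above delivers.
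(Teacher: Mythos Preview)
Your proposal is correct and follows essentially the same approach as the paper: both arguments rest entirely on the identity $\|h-s\|_M=\|C^*h-C^*s\|$ from \cref{e:semi} to translate the $M$-seminorm minimization over $S$ into the ordinary minimization over $C^*(S)$. The paper packages this as a single chain of equivalences rather than your two inclusions, but the content is identical.
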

\begin{proof}
Suppose that $\hat{s}\in H$. 
Set $d := C^*h$, $\hat{r} := C^*\hat{s}$,
and $R := C^*(S)$. 
We have the following equivalences:
\begin{subequations}
\begin{align} 
\hat{s}\in \Pi_S^M(h)
&\Leftrightarrow
\hat{s}\in S\;\land\; 
(\forall s\in S)\; \|h-\hat{s}\|_M\leq\|h-s\|_M\\
&\Leftrightarrow
\hat{s}\in S\;\land\; 
(\forall s\in S)\; \|C^*h-C^*\hat{s}\|\leq\|C^*h-C^*s\|\\
&\Leftrightarrow
\hat{s}\in S\;\land\; \hat{r}\in R\;\land\; 
(\forall r\in R)\; \|d-\hat{r}\|\leq\|d-r\|\\
&\Leftrightarrow
\hat{s}\in S\;\land\;
\hat{r}\in\Pi_R(d)\\
&\Leftrightarrow
\hat{s}\in S\;\land\; C^*\hat{s}\in \Pi_{C^*(S)}(C^*h)\\
&\Leftrightarrow
\hat{s}\in S\;\land\; \hat{s}\in (C^*)^{-1}\Pi_{C^*(S)}(C^*h)\\
&\Leftrightarrow
\hat{s}\in S \cap (C^*)^{-1}\big(\Pi_{C^*(S)}(C^*h)\big) ,
\end{align}
\end{subequations}
and we are done!
\end{proof}

\begin{proposition}
\label{p:2donkeys}
The following hold:
\begin{enumerate}
\item
\label{p:2donkeys1}
If
$\Pi_S^M$ is at most singleton-valued, 
then 
\begin{equation}
\label{e:2donkeys}
(\forall s\in S)\quad
(s+\ker C^*)\cap S = \{s\}. 
\end{equation}
\item
\label{p:2donkeys2}
If $\Pi_{C^*(S)}$ is at most singleton-valued and 
\cref{e:2donkeys} holds, then 
$\Pi_S^M$ is at most singleton-valued. 
\end{enumerate}
\end{proposition}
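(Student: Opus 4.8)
The plan is to reduce everything to ordinary norm minimization in $D$ via the identity $\|x\|_M=\|C^*x\|$ from \cref{e:semi}, together with the characterization of $\Pi_S^M$ in \cref{p:jen}; once this translation is made, both assertions become short bookkeeping arguments about $\ker C^*$.

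For \cref{p:2donkeys1}, I would fix $s\in S$ and let $s'$ be an arbitrary element of $(s+\ker C^*)\cap S$, aiming to show $s'=s$. Since $s'-s\in\ker C^*$, we have $C^*s=C^*s'$, hence, taking $h:=s$, both $s$ and $s'$ realize the value $\|C^*h-C^*(\cdot)\|=0$, which is the global minimum of the nonnegative seminorm $\|h-\cdot\|_M$ over $S$. Thus $\{s,s'\}\subseteq\Pi_S^M(s)$, and the assumption that $\Pi_S^M$ is at most singleton-valued forces $s=s'$. Combined with the trivial inclusion $s\in(s+\ker C^*)\cap S$, this gives \cref{e:2donkeys}.

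For \cref{p:2donkeys2}, I would fix $h\in H$ and suppose $\hat{s}_1,\hat{s}_2\in\Pi_S^M(h)$, the goal being $\hat{s}_1=\hat{s}_2$. By \cref{p:jen}, $\Pi_S^M(h)=S\cap(C^*)^{-1}\big(\Pi_{C^*(S)}(C^*h)\big)$, so both $C^*\hat{s}_1$ and $C^*\hat{s}_2$ lie in $\Pi_{C^*(S)}(C^*h)$. Since $\Pi_{C^*(S)}$ is assumed at most singleton-valued, $C^*\hat{s}_1=C^*\hat{s}_2$, i.e.\ $\hat{s}_2\in\hat{s}_1+\ker C^*$. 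As also $\hat{s}_1,\hat{s}_2\in S$, applying \cref{e:2donkeys} with $s:=\hat{s}_1$ yields $\hat{s}_2\in(\hat{s}_1+\ker C^*)\cap S=\{\hat{s}_1\}$, hence $\hat{s}_1=\hat{s}_2$.

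I do not expect a genuine obstacle here: the entire content is the passage from $M$-seminorm minimization on $H$ to norm minimization on $D$, which is already encapsulated in \cref{e:semi} and \cref{p:jen}. The only step deserving a moment's care is the observation that $\min_{s\in S}\|h-s\|_M=\min_{r\in C^*(S)}\|C^*h-r\|$, so that the $C^*$-image of an $M$-minimizer over $S$ is a true minimizer over $C^*(S)$; this is immediate from $\|h-s\|_M=\|C^*h-C^*s\|$, and in fact \cref{p:jen} is exactly this fact in packaged form, so invoking \cref{p:jen} in \cref{p:2donkeys2} bypasses even this minor point. (Alternatively, \cref{p:2donkeys2} can be proved directly without \cref{p:jen} by the same two-line computation, but using \cref{p:jen} is cleaner.)
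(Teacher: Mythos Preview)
Your proposal is correct and mirrors the paper's proof essentially step for step: for \cref{p:2donkeys2} both arguments invoke \cref{p:jen} to get $C^*\hat{s}_1=C^*\hat{s}_2$ and then finish with \cref{e:2donkeys}, and for \cref{p:2donkeys1} the paper phrases the same computation as a contrapositive (exhibit $s\neq s'$ in $\Pi_S^M(s)$ when \cref{e:2donkeys} fails) while you phrase it directly, which is a purely cosmetic difference.
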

\begin{proof}
Clearly, $(s+\ker C^*)\cap S\supseteq \{s\}$.

\cref{p:2donkeys1}: 
We prove the contrapositive. 
Suppose \cref{e:2donkeys} does not hold.
Then there exists $s\in S$ such that 
$(s+\ker C^*)\supsetneqq\{s\}$. 
Take $k\in\ker C^*\smallsetminus\{0\}$ such that $s+k = s'\in S$.
Then $\|s-s\|_M=\|0\|_M=\|C^*0\|=0$ and 
$\|s-s'\|_M = \|-k\|_M=\|-C^*k\|=0$.
Hence $\{s,s'\}\subseteq \Pi_S^M(s)$ and $\Pi_S^M$ is not at most singleton-valued. 

\cref{p:2donkeys2}: 
Let $h\in H$ and suppose that 
$s_1$ and $s_2$ both belong to $\Pi_S^M(h)$.
By \cref{p:jen},
$C^*s_1\in\Pi_{C^*(S)}(C^*h)$ and 
$C^*s_2\in\Pi_{C^*(S)}(C^*h)$. 
Because $\Pi_{C^*(S)}$ is at most singleton-valued, we deduce that 
$C^*s_1=C^*s_2$.
Hence $s_2-s_1\in\ker C^*$. 
Combining with \cref{e:2donkeys}, we deduce that 
$s_2\in(s_1+\ker C^*)\cap S = \{s_1\}$ and so
$s_2=s_1$. 
\end{proof}

\begin{remark}
The general results above give another view on 
why $P_{\Fix T}^M$ is not multi-valued. We sketch here why.
Suppose that $S = \Fix T$. 
We know that $C^*(S)=\Fix\Tt$ is a nonempty closed convex set because $\Tt$ is (firmly) nonexpansive.
Hence $\Pi_{C^*(S)}$ is actually singleton-valued.
Let $s\in S =\Fix T$, $k\in\ker C^*$, and assume that 
$s+k\in S$.
Then 
$s+k=T(s+k)=(A+M)^{-1}(Ms+Mk)=(A+M)^{-1}(Ms)=T(s)=s$
and so $k=0$.
This verifies \cref{e:2donkeys}.
In view of \cref{p:2donkeys}\cref{p:2donkeys2}, 
we see that $\Pi^M_S$ is (at most) singleton-valued.
\end{remark}

For an example that illustrates
that $\Pi_S^M$ may be empty-valued or multi-valued, 
see \cref{sss:bizarre} below.

\section{Convergence}

\label{sec:conv}

\begin{theorem}[weak convergence and limits]
\label{t:wconv}
Suppose that $\Fix \Tt$ is an affine subspace of $D$.
Let $u_0\in H$ and set $w_0 = C^*u_0$. Then the following hold for the rPPP and PPP sequences 
$(w_k)_\kkk$ and $(u_k)_\kkk$ generated by 
\cref{e:genw} and \cref{e:genu}:
\begin{enumerate}
\item 
\label{t:wconv1}
The sequence $(w_k)_\kkk$ converges weakly to 
$w^*=P_{\Fix\Tt}(w_0)$.\\[-3mm] 
\item 
\label{t:wconv2}
The sequence $(Tu_k)_\kkk$ converges weakly to 
$u^*=P^M_{\Fix T}(u_0) = (M+A)^{-1}Cw^*$.\\[-3mm]
\item 
\label{t:wconv3}
If \cref{e:stayaway} holds, then 
the sequence $(u_k)_\kkk$ also converges weakly to 
$u^*=P^M_{\Fix T}(u_0)= (M+A)^{-1}Cw^*$. 
\end{enumerate}
\end{theorem}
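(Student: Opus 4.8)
The plan is to assemble this statement directly from the results already in hand; essentially no new analytic work is required, only careful bookkeeping of the logical dependencies. The key observation is that the affineness hypothesis on $\Fix\Tt$ is exactly what is needed to trigger \cref{t:w*}, which in turn unlocks the identification in \cref{t:u*}.

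First I would prove \cref{t:wconv1}. Since $\Fix\Tt$ is assumed to be an affine subspace of $D$ (and is nonempty, being the fixed point set of the firmly nonexpansive operator $\Tt$ and nonempty by \cref{p:FixTtilde} together with $\zer A\neq\varnothing$), \cref{t:w*} applies and yields $w^* = P_{\Fix\Tt}(w_0)$. Combining this with the weak convergence $w_k\weakly w^*$ from \cref{f:known}\cref{f:known2} gives the claim.

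Next, for \cref{t:wconv2}: \cref{f:known}\cref{f:known1} gives $Tu_k\weakly u^*$, and \cref{f:known}\cref{f:known5} gives $u^* = (M+A)^{-1}Cw^*$. Because the previous paragraph has established $w^* = P_{\Fix\Tt}(w_0)$, the hypothesis of \cref{t:u*} is satisfied, so that theorem delivers $u^* = P^M_{\Fix T}(u_0)$. Chaining these three equalities proves \cref{t:wconv2}. Finally, for \cref{t:wconv3}, assume \cref{e:stayaway} holds; then \cref{f:known}\cref{f:known7} gives that $(u_k)_\kkk$ converges weakly to the \emph{same} limit $u^*$, and the identification $u^* = P^M_{\Fix T}(u_0) = (M+A)^{-1}Cw^*$ is precisely the one just obtained.

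There is no real obstacle here: the content is a synthesis of \cref{f:known}, \cref{t:w*}, and \cref{t:u*}. The only point demanding attention is the order of the argument — \cref{t:u*} takes $w^* = P_{\Fix\Tt}(w_0)$ as a hypothesis, so parts \cref{t:wconv2} and \cref{t:wconv3} must be derived only after the affineness assumption has been used (via \cref{t:w*}) to verify that hypothesis in part \cref{t:wconv1}.
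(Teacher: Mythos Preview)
Your proposal is correct and follows essentially the same approach as the paper's own proof: each item is obtained by combining the relevant parts of \cref{f:known} with \cref{t:w*} and \cref{t:u*}. The only cosmetic difference is that for \cref{t:wconv2} the paper cites \cref{t:krista}\cref{t:krista1} (to justify $(M+A)^{-1}Cw^* = P^M_{\Fix T}(u_0)$) whereas you cite \cref{f:known}\cref{f:known5} (to justify $u^* = (M+A)^{-1}Cw^*$); either works, since both identities are already packaged inside the proof of \cref{t:u*}.
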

\begin{proof}
\cref{t:wconv1}: 
Combine 
\cref{f:known}\cref{f:known2} and \cref{t:w*}.
\cref{t:wconv2}: 
Combine
\cref{f:known}\cref{f:known1}, \cref{t:w*}, 
\cref{t:u*}, and \cref{t:krista}\cref{t:krista1}. 
\cref{t:wconv3}: 
Combine \cref{t:wconv2} with \cref{f:known}\cref{f:known7}.
\end{proof}

\begin{theorem}[strong convergence and limits]
\label{t:sconv}
Suppose that $A$ is a linear relation. 
Let $u_0\in H$ and set $w_0 = C^*u_0$. 
Suppose that the parameter sequence 
$(\lambda_k)_\kkk$ is identical to some constant $\lambda\in\left]0,2\right[$. 
Then the following hold for the rPPP and PPP sequences 
$(w_k)_\kkk$ and $(u_k)_\kkk$ generated by 
\cref{e:genw} and \cref{e:genu}:
\begin{enumerate}
\item 
\label{t:sconv1}
The sequence $(w_k)_\kkk$ converges strongly to 
$w^*=P_{\Fix\Tt}(w_0)$.\\[-3mm] 
\item 
\label{t:sconv2}
The sequences $(Tu_k)_\kkk$ and $(u_k)_\kkk$ converge strongly to 
$u^*=P^M_{\Fix T}(u_0)= (M+A)^{-1}Cw^*$. 
\end{enumerate}
\end{theorem}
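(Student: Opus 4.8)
The plan is to exploit the linearity of $A$ to reduce everything to the Baillon--Bruck--Reich theorem (\cref{f:BBR}). First I would observe that since $A$ is a linear relation, $\Tt = \J{C^*\tr A}$ is a linear (firmly) nonexpansive operator on $D$, as noted in \cref{r:linrels}. The rPPP recursion \cref{e:genw} with constant parameter $\lambda$ reads $w_{k+1} = ((1-\lambda)\Id + \lambda\Tt)w_k =: N w_k$, where $N$ is linear and nonexpansive (a convex combination of $\Id$ and the nonexpansive $\Tt$, using $\lambda\in\left]0,2\right[$ so that $(1-\lambda)\Id+\lambda\Tt$ is averaged, hence nonexpansive). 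Thus $w_k = N^k w_0$. To invoke \cref{f:BBR} I need $N^k x - N^{k+1}x\to 0$ for every $x\in D$; this follows because $\Tt$ is firmly nonexpansive, so the averaged iteration $w_{k+1}=Nw_k$ satisfies $w_k - \Tt w_k\to 0$ by the standard theory of averaged mappings (e.g.\ \cite[Theorem~5.14(i)]{BC2017} applied to $\Tt$ on all of $D$, using $\Fix\Tt\neq\varnothing$ which holds by \cref{f:known}\cref{f:known2}), and $N^kx - N^{k+1}x = \lambda(w_k - \Tt w_k)$ when $w_0=x$. Then \cref{f:BBR} gives that $(w_k)_\kkk=(N^kw_0)_\kkk$ converges \emph{strongly} to some fixed point of $N$, which is necessarily a fixed point of $\Tt$; since $(w_k)_\kkk$ already converges weakly to $w^*$ by \cref{f:known}\cref{f:known2}, the strong limit must be $w^*$. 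Finally, $\Fix\Tt$ is a linear subspace (hence affine), so \cref{t:w*} identifies $w^* = P_{\Fix\Tt}(w_0)$. This proves \cref{t:sconv1}.

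For \cref{t:sconv2}, I would first handle $(Tu_k)_\kkk$. By \cref{f:known}\cref{f:known3}, $Tu_k = (M+A)^{-1}Cw_k$, and $(M+A)^{-1}C\colon D\to H$ is Lipschitz continuous (being the composition of the continuous linear $C$ with the Lipschitz map $(M+A)^{-1}$ from \cref{e:rfk}). Hence the strong convergence $w_k\to w^*$ from part \cref{t:sconv1} propagates: $Tu_k = (M+A)^{-1}Cw_k \to (M+A)^{-1}Cw^* = u^*$ strongly, where the last equality is \cref{f:known}\cref{f:known5}. The identification $u^* = P^M_{\Fix T}(u_0)$ then follows from \cref{t:u*} (applicable since we have just shown $w^*=P_{\Fix\Tt}(w_0)$), and $u^* = (M+A)^{-1}Cw^*$ is \cref{f:known}\cref{f:known5} again.

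It remains to upgrade the strong convergence of $(Tu_k)_\kkk$ to strong convergence of $(u_k)_\kkk$ itself. Since the constant sequence $\lambda_k\equiv\lambda\in\left]0,2\right[$ satisfies \cref{e:stayaway}, \cref{f:buried} gives $u_k - Tu_k\to 0$ strongly. Therefore $u_k = Tu_k + (u_k - Tu_k) \to u^* + 0 = u^*$ strongly. This completes the proof. The main obstacle I anticipate is the verification that $N^kx - N^{k+1}x\to 0$ for \emph{all} starting points $x\in D$ and not merely for $x = w_0$: \cref{f:BBR} requires the hypothesis for every $x$. This is fine because the averaged-mapping estimate $w_k-\Tt w_k\to 0$ for $w_{k+1}=Nw_k$ holds for any initial point in $D$ (the argument only uses $\Fix\Tt\neq\varnothing$ and firm nonexpansiveness of $\Tt$, both of which are independent of the starting point), so the hypothesis of \cref{f:BBR} is met globally; the only care needed is to state this cleanly rather than relying on the particular orbit through $w_0$.
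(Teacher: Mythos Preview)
Your proposal is correct and follows essentially the same route as the paper's proof: linearity of $\Tt$, averagedness of $N=(1-\lambda)\Id+\lambda\Tt$, asymptotic regularity $w_k-Nw_k\to 0$, the Baillon--Bruck--Reich theorem for strong convergence of $(w_k)_\kkk$, identification of the limit via \cref{t:w*}, then propagation to $(Tu_k)_\kkk$ through the Lipschitz map $(M+A)^{-1}C$ and to $(u_k)_\kkk$ via \cref{f:buried}. The only cosmetic differences are that the paper writes $N=(1-\lambda/2)\Id+(\lambda/2)(2\Tt-\Id)$ to exhibit averagedness and invokes \cref{t:wconv} directly for the limit identification, whereas you go through \cref{f:known}\cref{f:known2} and \cref{t:w*}/\cref{t:u*} separately; your care about verifying the hypothesis of \cref{f:BBR} for \emph{every} starting point is well placed and correctly handled.
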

\begin{proof}
Because $A$ is a linear relation on $H$, 
it follows that 
$\Tt$ is a linear operator and that $\Fix\Tt$ is a linear subspace of $D$. 
Note that $(\forall\kkk)$ $w_{k+1} = Nw_k$, 
where $N := (1-\lambda)\Id+\lambda\Tt 
= (1-(\lambda/2))\Id+(\lambda/2)\widetilde{N}$
and $\widetilde{N}:=2N-\Id$ are both linear and nonexpansive,
 with $\Fix\Tt = \Fix N = \Fix\widetilde{N}$. 
By \cite[Theorem~5.15(ii)]{BC2017},
$w_k-\widetilde{N}w_k\to 0$;
equivalently, $w_k-Nw_k\to 0$. 

\cref{t:sconv1}: 
Indeed, it follows from \cref{f:BBR} 
that $(w_k)_\kkk$ converges strongly. 
The result now follows from \cref{t:wconv}\cref{t:wconv1} 
(or \cite[Proposition~5.28]{BC2017}). 

\cref{t:sconv2}: 
On the one hand, using \cref{e:rfk}, 
we see that $(M+A)^{-1}C$ is Lipschitz continuous.
On the other hand, 
$(Tu_k)_\kkk = ((M+A)^{-1}Cw_k)_\kkk$ by 
\cref{f:known}\cref{f:known3}. 
Altogether, because $(w_k)_\kkk$ strongly converges
by \cref{t:sconv1}, we deduce that 
$(Tu_k)_\kkk$ must converge \emph{strongly} as well.
By \cref{t:wconv}\cref{t:wconv2}, $Tu_k\to P_{\Fix T}^M(u_0)$. 
Finally, piggybacking on \cref{f:buried}, we obtain that 
$(u_k)_\kkk$ converges strongly to $P_{\Fix T}^M(u_0)$ 
as well. 
\end{proof}
\begin{remark}
  Using a translation argument similar to what was done in \cite[Section~4.4]{BSW}, one can generalize \cref{t:sconv} to the case when $A$ is an affine relation, i.e., $\gr A$ is an affine subspace of $H\times H$, and such that $\zer A\neq 
\varnothing$. 
 \end{remark}

\section{Examples}

\label{sec:examples}

In this section, we consider the
Douglas-Rachford, Chambolle-Pock, Ryu, and the
Malitsky-Tam minimal lifting (which we will refer to as
Malitsky-Tam)
splitting algorithms. 

\subsection{Douglas-Rachford}

\subsubsection{General setup}

\label{sss:DRgeneral}

Following \cite[Sections~1 and 3]{BCLN}, we suppose that $X$ is a real Hilbert space,
and $A_1,A_2$ are two maximally monotone operators on $X$.
The goal is to find a point in $\zer(A_1+A_2)$. 
Now assume\footnote{We shall employ frequently ``block operator'' notation for convenience.} that 
\begin{equation}
H=X^2, \;\; D=X, \;\;\text{and} \;\;C=\begin{bmatrix}
\Id\\-\Id
\end{bmatrix}. 
\end{equation}
Then 
\begin{equation}
\label{e:C*DRS}
C^*=\begin{bmatrix}
\Id \;\; -\Id
\end{bmatrix}\colon X^2\to X \colon \begin{bmatrix}x\\y\end{bmatrix}
\mapsto x-y
\end{equation}
is clearly surjective. 
Now we compute 
\begin{equation}\label{e:MDRS}
M=CC^\ast=\begin{bmatrix}
\Id&-\Id\\-\Id& \Id
\end{bmatrix}
\end{equation}
and we set 
\begin{equation}\label{e:ADRS}
A=\begin{bmatrix}
 A_1 & \Id\\ -\Id & A_2^{-1}
\end{bmatrix}
\end{equation}
which 
is maximally monotone as the sum of a maximally monotone operator $(x,y)\mapsto A_1x\times A_2^{-1}y$ and a skew linear operator. 
Next\footnote{We use occasionally use the notation $B^\ovee=(-\Id)\circ B\circ (-\Id)$ and $B^{-\ovee}={(B^{-1})}^\ovee$.}, 
\begin{subequations}
\label{e:zerADRS}
\begin{align}
\zer A &= 
\menge{(x,y)}{0\in A_1x+y \land 0\in -x+A_2^{-1}y}\\ 
&= 
\menge{(x,y)}{0\in A_1x+y \land y\in A_2x}\\ 
&= 
\menge{(x,y)}{x\in\zer(A_1+A_2) \land -y\in A_1x\cap(-A_2x)}\\
&= 
\menge{(x,y)}{-y \in\zer(A_1^{-1}+A_2^{-\ovee}) \land x \in A_1^{-1}(-y)\cap (-A_2^{-\ovee}(-y))}\label{e:FixDRdual}\\
&=\gr(A_2)\cap \gr(-A_1)
\end{align}
\end{subequations}
where \cref{e:FixDRdual} follows from \cite[Proposition~2.4]{BBHM} 
and also relates to 
Attouch-Th\'era duality 
and associated operations (see \cite[Section~3]{BBHM}
for details). 
One verifies that 
\begin{equation}\label{e:MAinvDRS}
(M+A)^{-1}\colon X^2\to X^2\colon 
\begin{bmatrix} x\\ y\end{bmatrix} \mapsto \begin{bmatrix}
\J{ A_1}(x)\\\J{ A_2^{-1}}(y+2\J{A_1} (x))
\end{bmatrix}
\end{equation}
is indeed single-valued, full-domain, and Lipschitz continuous; hence,
\begin{equation}
\label{e:MAinvCDRS}
(M+A)^{-1}C\colon X\to X^2\colon 
w \mapsto 
\begin{bmatrix}
\J{ A_1}(w)\\\J{ A_2^{-1}}(-w+2\J{A_1}(w))
\end{bmatrix}
=
\begin{bmatrix}
\J{ A_1}(w)\\
\J{A_2^{-1}}\R{A_1}(w) 
\end{bmatrix}.
\end{equation}
We now compute
\begin{subequations}
\label{e:TDRS}
\begin{align}
T(x,y) &= (M+A)^{-1}M(x,y) 
= (M+A)^{-1}(x-y,y-x)\\
&= \begin{bmatrix}
\J{A_1}(x-y)\\
\J{A_2^{-1}}\R{A_1}(x-y)
\end{bmatrix}.
\end{align}
\end{subequations}
Furthermore, recalling \cref{e:C*DRS} and 
\cref{e:MAinvCDRS}, we obtain 
\begin{subequations}
\label{e:TtDRS}
\begin{align}
\Tt(w)
&= C^*(M+A)^{-1}Cw = 
[\Id\;\;-\Id]\begin{bmatrix}
\J{ A_1}(w)\\-w+2\J{A_1}(w)-\J{A_2}\R{A_1}(w) 
\end{bmatrix}\\
&= w - \J{A_1}(w) + \J{A_2}\R{A_1}(w), 
\end{align}
\end{subequations}
which is the familiar Douglas-Rachford splitting operator. 
Using the fact that $\Fix \Tt = C^*(\Fix T)$ (see \cref{e:FixTtilde}) and \cref{e:zerADRS}, we
obtain 
\begin{equation}
\label{e:FixTtDRS}
\Fix\Tt = 
\bigcup_{x\in\zer(A_1+A_2)}x+\big(A_1x\cap(-A_2x)\big), 
\end{equation}
an identity that also follows from 
\cite[Theorem~4.5]{BBHM}.

\subsubsection{An example without strong convergence}
\label{sss:DRnotstrong}
Now suppose temporarily that 
$X=\ell^2$, that $B$, $s_0$ and $\bar{s}$ are as in \cref{p:GL},
that $A_1 = B$, and 
that $A_2=0$. 
Then \cref{e:zerADRS}, \cref{e:TDRS}, and \cref{e:TtDRS} turn into 
\begin{equation}
\zer A = \zer(B)\times\{0\},
\;\;
T(x,y) = \begin{bmatrix}
\J{B}(x-y)\\
0
\end{bmatrix}, 
\;\;\text{and}\;\;
\Tt(w) = J_{B}(w). 
\end{equation}
Suppose that furthermore $(\forall\kkk)$ $\lambda_k=1$ and 
$u_0 = (s_0,0)$.  
It then follows that $w_0=C^*u_0 = s_0$,  
$u_k = T^k(s_0,0) = (J_{B}^k(s_0),0)\weakly (\bar{s},0)$ and 
$w_k = \Tt^k(s_0) = J^k_{B}(s_0)\weakly \bar{s}$ and 
neither convergence is strong.

\subsubsection{Specialization to normal cones of linear subspaces}

We now assume that $U_1,U_2$ are two closed linear 
subspaces of $X$ and that\footnote{Given a nonempty closed convex subset $U$ of $X$, recall that $N_U(x):=\begin{cases}
\menge{x^*\in X}{\max\scal{U-x}{x^*}= 0},&\text{if }x\in U;\\
\emptyset,&\text{otherwise} 
\end{cases}
$
is the normal cone operator of $U$
at $x\in X$.} $A_1=N_{U_1},A_2=N_{U_2}$.
Then $\zer(A_1+A_2)=U_1\cap U_2$, 
$\gr A_1 = U_1\times U_1^\perp = U_1\times (-U_1^\perp)=\gr (-A_1)$ and 
$\gr A_2 = U_2\times U_2^\perp$; hence,
\cref{e:zerADRS} yields
\begin{equation}
\Fix T = \zer A = (U_1\times U_1^\perp) \cap (U_2\times U_2^\perp) = (U_1\cap U_2)\times (U_1^\perp \cap U_2^\perp). 
\end{equation}
Next, we see that \cref{e:TDRS}, \cref{e:TtDRS}, \cref{e:FixTtDRS} turn into
\begin{align}
T(x,y) = \begin{bmatrix}
P_{U_1}(x-y)\\
P_{U_2^\perp}(P_{U_1}-P_{U_1^\perp})(x-y)
\end{bmatrix},
\end{align}
\begin{align}
\Tt = \Id - P_{U_1}+P_{U_2}(P_{U_1}-P_{U_1^\perp}) = P_{U_2}P_{U_1}+P_{U_2^\perp}P_{U_1^\perp},
\end{align}
\begin{align}
\Fix \Tt = 
\bigcup_{x\in U_1\cap U_2}
x+(U_1^\perp\cap (-U_2^\perp))
= (U_1\cap U_2) + (U_1^\perp\cap U_2^\perp)
\end{align}
respectively.
(The latter two identities were also derived in \cite{BBCNPW}.)
It follows from \cref{t:wconv}\cref{t:wconv1} that
the rPPP sequence, i.e., 
the governing sequence of the Douglas-Rachford algorithm, satisfies
\begin{equation}
\label{e:w*DRS}
w_k\weakly w^* = P_{\Fix\Tt}(w_0) = P_{U_1\cap U_2}(w_0)
+ P_{U_1^\perp\cap U_2^\perp}(w_0). 
\end{equation}
Combining \cref{f:known}\cref{f:known3}, 
\cref{e:MAinvCDRS}, \cref{t:wconv}\cref{t:wconv2},
\cref{t:krista}\cref{t:krista1},
\cref{e:MAinvCDRS}, and
\cref{e:w*DRS}, we have\footnote{Given a nonempty closed convex subset $U$ of $X$, its reflected projection is $\R{U}:=\R{N_U}:=2 P_U-\Id$. }
\begin{align}
Tu_k &= (M+A)^{-1}Cw_k
= \begin{bmatrix}
P_{U_1}w_k\\
P_{U_2^\perp}\R{U_1}w_k\\
\end{bmatrix}
\end{align}
and 
\begin{subequations}
\label{e:230628d}
\begin{align}
u^* &= P^M_{\Fix T}(u_0)
= (M+A)^{-1}Cw^*
= \begin{bmatrix}
P_{U_1}w^*\\
P_{U_2^\perp}\R{U_1}w^*
\end{bmatrix}
= 
\begin{bmatrix}
P_{U_1}\big(P_{U_1\cap U_2}(w_0) + P_{U_1^\perp\cap U_2^\perp}(w_0)\big)\\
P_{U_2^\perp}\R{U_1}\big(P_{U_1\cap U_2}(w_0) + P_{U_1^\perp\cap U_2^\perp}(w_0)\big)
\end{bmatrix}\\
&= 
\begin{bmatrix}
P_{U_1\cap U_2}(w_0)\\
-P_{U_1^\perp\cap U_2^\perp}(w_0)
\end{bmatrix}
= 
\begin{bmatrix}
P_{U_1\cap U_2}(x_0-y_0)\\
P_{U_1^\perp\cap U_2^\perp}(y_0-x_0)
\end{bmatrix}.
\end{align}
\end{subequations}
Note that the last description of $u^*$ clearly differs from $P_{\Fix T}(u_0)=
(P_{U_1\cap U_2}(x_0),P_{U_1^\perp\cap U_2^\perp}(y_0))$ in general!
Returning to \cref{e:230628d}, we deduce in particular that 
the shadow sequence 
$(P_{U_1}w_k)$ satisfies
\begin{equation}
P_{U_1}w_k\weakly P_{U_1\cap U_2}(w_0).
\end{equation}
Moreover, combining with \cref{t:sconv},
we have the following \emph{strong} convergence\footnote{
This was also derived in \cite{BBCNPW} when $\lambda=1$.}
result: 
\begin{equation}
w_k\to P_{U_1\cap U_2}(w_0)+P_{U_1^\perp\cap U_2^\perp}(w_0)
\;\;\text{and}\;\;
P_{U_1}w_k\to P_{U_1\cap U_2}(w_0)
\end{equation}
provided that $\lambda_k=\lambda$ for all $\kkk$.

\subsubsection{When $\Pi_S^M$ is bizarre}

\label{sss:bizarre}

We return to the general setup of 
\cref{sss:DRgeneral}.
We shall show that $\Pi_S^M$ may be empty or single-valued when $S$ is a certain closed convex subset of $H=X^2$. 
To this end, assume that $S \subseteq H$ is the Cartesian product 
\begin{equation}
S=S_1\times S_2,
\end{equation}
where $S_1,S_2$ are nonempty closed convex subsets of $X$.
Then $C^*(S)=S_1-S_2$ may fail to be closed
\emph{even when each $S_i$ is a closed linear subspace of $X$} (see, e.g., \cite{BBCNPW}). 
Consider such a scenario, 
let $\bar{x}\in\overline{S_1-S_2}\smallsetminus(S_1-S_2)$, and set $\bar{h} := (\bar{x},0)$ and let 
$s=(s_1,s_2)\in S$. 
Then $\|\bar{h}-s\|_M=\|\bar{x}-(s_1-s_2)\|>0$ while clearly $\inf\|\bar{x}-(S_1-S_2)\|=0$. In other words,
\begin{equation}
\Pi_S^M(\bar{h})=\varnothing. 
\end{equation}
On the other hand, if 
$S_1=S_2=X$ and $h=(h_1,h_2)\in S=H$, then
$S=S_1\times S_2 = X\times X$ and 
\cref{p:jen} yields
\begin{equation}
\Pi_S^M({h})=\big(h+\ker C^*\big) \cap S 
=
\big((h_1,h_2)+\menge{(x,x)}{x\in X}\big)
\end{equation}
is clearly multi-valued provided $X\neq\{0\}$.
In summary, 
$\Pi_S^M$ may be empty-valued or multi-valued.

\subsection{Chambolle-Pock}

\label{ss:CP}

\subsubsection{General setup}

\label{sss:CPgeneral}

Following the presentation of the algorithm by Chambolle-Pock (see \cite{CP}) in Bredies et al.'s \cite{BCLN} (see also \cite{Condat}), we suppose that $X$ and $Y$ 
are two real Hilbert spaces, 
$A_1$ is maximally monotone on $X$
and $A_2$ is maximally monotone on $Y$.
We also have a continuous linear operator
$L\colon X\to Y$, as well as 
$\sigma>0$ and $\tau>0$ such that 
$\sigma\tau\|L\|^2\leq 1$.
The goal is to find a point in\footnote{The operator $L^*A_2L$ is not necessarily maximally monotone. For a sufficient condition, 
see \cite[Corollary~25.6]{BC2017}.}
\begin{equation}
\zer(A_1 + L^*A_2L),
\end{equation}
which we assume to exist. 
We have
\begin{equation}
\label{e:230629a}
H = X \times Y 
\;\;\text{and}\;\;
M = \begin{bmatrix}
\frac{1}{\sigma}\Id_X & -L^*\\
-L & \frac{1}{\tau}\Id_Y
\end{bmatrix};
\end{equation}
hence, 
\begin{equation}
\label{e:230628b}
\|(x,y)\|_M^2 = \frac{1}{\sigma}\|x\|^2 - 2\scal{Lx}{y}+\frac{1}{\tau}\|y\|^2.
\end{equation}
Unfortunately, for Chambolle-Pock the operator $C$ in the factorization $CC^*$ is typically
\emph{not} explicitly 
available\footnote{However, see \cref{ss:moreCPFac} and \cref{ss:moreCPRR} below for some factorizations.}. 
Moreover, 
\begin{equation}
A = \begin{bmatrix}
A_1 & L^* \\
-L & A_2^{-1}
\end{bmatrix}. 
\end{equation}
Then $A$ is maximally monotone on $X\times Y$, 
because it is the sum of the maximally monotone operator $(x,y)\mapsto A_1x\times A_2^{-1}y$ and a skew linear operator. 
Next, 
\begin{subequations}
\label{e:zerACP}
\begin{align}
\zer A 
&= 
\menge{(x,y)}{0\in A_1x+L^*y \land 0\in -Lx+A_2^{-1}y}\\ 
&= 
\menge{(x,y)}{0\in A_1x+L^*y \land y\in A_2(Lx)}\\
&= 
\menge{(x,y)}{x\in\zer(A_1+L^*A_2L) \land y\in (-L^{-*}A_1x) \cap (A_2Lx)}\label{e:CPFixTset}\\
&= 
\menge{(x,y)}{y\in\zer(-L^*A_1^{-1}(-L) +A_2^{-1}) \land x\in (A_1^{-1}(-L^*y)) \cap (L^{-1}A_2^{-1}y)}\label{e:CPdualset}
\end{align}
\end{subequations}
where \cref{e:CPdualset} follows from elementary algebraic manipulations (see also \cite[Proposition~1]{EF}). 
This shows that when 
$(x,y)\in\zer A$, then $x$ is a primal solution while $y$ corresponds to a dual solution, i.e., $y$ satisfies $0\in L^*A_1^{-\ovee}Ly = A_2^{-1}y$. 
One verifies (see \cite[equation~(3.4)]{BCLN}) that 
\begin{equation}\label{e:MAinvCP}
(M+A)^{-1}\colon X\times Y\to X\times Y\colon 
\begin{bmatrix} x\\ y\end{bmatrix} \mapsto \begin{bmatrix}
\J{\sigma A_1}(\sigma x)\\\J{\tau A_2^{-1}}\big(2\tau L \J{\sigma A_1}(\sigma x)+\tau y\big)
\end{bmatrix}
\end{equation}
is indeed single-valued, full-domain, and Lipschitz continuous.
Hence (see \cite[equation~(3.3)]{BCLN}) 
\begin{align}
\label{e:TCP}
T(x,y) &= (M+A)^{-1}M(x,y) 
= \begin{bmatrix}
\J{\sigma A_1}(x-\sigma L^*y)\\
\J{\tau A_2^{-1}}\big(y+\tau L(2\J{\sigma A_1}(x-\sigma L^*y)-x) \big)
\end{bmatrix}.
\end{align}
Using the general 
inverse resolvent identity, (see, e.g., \cite[Proposition~23.20]{BC2017}), 
we know that 
$\J{\tau A_2^{-1}}=\Id-\tau\J{\frac{1}{\tau}A_2}\circ \frac{1}{\tau}\Id$.
Therefore, we can express 
\cref{e:TCP} also as
\begin{equation}\label{e:CPnoA2inv}
T(x,y) = 
\begin{bmatrix}
\J{\sigma A_1}(x-\sigma L^*y)\\[0.5em]
y+\tau L(2\J{\sigma A_1}(x-\sigma L^*y)-x)-\tau\J{\frac1\tau A_2}\big(\frac{1}{\tau}y+L(2\J{\sigma A_1}(x-\sigma L^*y)-x) \big)
\end{bmatrix}.
\end{equation}

\subsubsection{An example without strong convergence}
\label{sss:CPnotstrong}
Now suppose temporarily that 
$X=\ell^2$, that $B$, $s_0$ and $\bar{s}$ are as in \cref{p:GL}, 
that $A_1 = \tfrac{1}{\sigma}B$, 
and  that $A_2=0$. 
Then \cref{e:zerACP} and \cref{e:TCP} turn into
\begin{equation}
\zer A = \zer(B)\times\{0\}
\;\;\text{and}\;\;
T(x,y) = \begin{bmatrix}
\J{B}(x-\sigma L^*y)\\
0
\end{bmatrix}. 
\end{equation}
Suppose that furthermore $(\forall\kkk)$ $\lambda_k=1$ and 
$u_0 = (s_0,0)$. 
It then follows that 
$u_k = T^k(s_0,0) = (J_{B}^k(s_0),0)\weakly (\bar{s},0)$ 
but the convergence is not strong.

\subsubsection{Specialization to normal cones of linear subspaces}

\label{sss:CPtwosup}

We now assume that $U$ is closed linear subspace of $X$,
that $V$ is a closed linear subspace of $Y$, 
that $A_1=N_{U},A_2=N_{V}$.
Let $(x,y)\in X\times Y$.
Then $A_1x = U^\perp$ if $x\in U$, and $A_1x=\varnothing$ if 
$x\notin U$ and similarly for $N_Vy$. 
Note that $0=0+0\in U^\perp + L^*V^\perp$. 
It follows that 
$x\in \zer(A_1+L^*A_2L)$
$\Leftrightarrow$ 
$0\in A_1x+L^*A_2Lx$
$\Leftrightarrow$ 
$[x\in U \land Lx\in V]$ 
$\Leftrightarrow$ 
$x\in U \cap L^{-1}(V)$. 
We have shown 
$\zer(A_1+L^*A_2L)=U\cap L^{-1}(V)$. 
Now assume that $x\in \zer(A_1+L^*A_2L)=U\cap L^{-1}(V)$. 
Then
$y\in (-L^{-*}A_1x)\cap (A_2Lx)$
$\Leftrightarrow$ 
$[L^*(-y)\in A_1x \land y\in A_2Lx]$
$\Leftrightarrow$ 
$[-L^*y\in U^\perp \land y\in V^\perp]$
$\Leftrightarrow$ 
$[L^*y\in -U^\perp=U^\perp \land y\in V^\perp]$
$\Leftrightarrow$ 
$y\in L^{-*}(U^\perp) \cap V^\perp$.
Combining this with 
\cref{e:zerACP} yields 
\begin{equation}
\label{e:230628a}
\Fix T = \zer A = 
\big(U\cap L^{-1}(V)\big) \times 
\big(V^\perp \cap L^{-*}(U^\perp) \big).
\end{equation}
Now \cref{e:TCP} 
{and \cref{e:CPnoA2inv}} particularize to 
\begin{align}
\label{e:230629c}
T(x,y)
&= 
\begin{bmatrix}
P_U(x-\sigma L^*y)\\
P_{V^\perp}\big(y+\tau L(2P_{U}(x-\sigma L^*y)-x) \big)
\end{bmatrix}
\end{align}
and 
\begin{equation}
T(x,y) =\begin{bmatrix}
P_{U}(x-\sigma L^*y)\\[0.5em]
y+\tau L(2P_U(x-\sigma L^*y)-x)- P_{V}\big(y+\tau L(2P_{U}(x-\sigma L^*y)-x) \big)
\end{bmatrix}
\end{equation}
respectively. 
\begin{lemma}
\label{l:230628a}
Given $(x_0,y_0)\in X\times Y$, we have 
\begin{equation}
P_{\Fix T}^M(x_0,y_0)
=
\big(
P_{U\cap L^{-1}(V)}(x_0-\sigma L^*y_0),
P_{V^\perp\cap L^{-*}(U^\perp)}(y_0-\tau Lx_0)
\big).
\end{equation}
\end{lemma}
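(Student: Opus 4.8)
The plan is to verify directly that the point on the right-hand side is the (unique) $M$-projection, using the normal-equation characterization of projections onto a subspace with respect to the semi-inner product $(u,v)\mapsto\scal{u}{Mv}$ induced by $M$ (cf.\ \cref{e:230628b}). Write $U_1:=U\cap L^{-1}(V)$ and $V_1:=V^\perp\cap L^{-*}(U^\perp)$, so that $\Fix T=U_1\times V_1$ by \cref{e:230628a}; both are closed linear subspaces. Set $\bar x:=P_{U_1}(x_0-\sigma L^*y_0)$ and $\bar y:=P_{V_1}(y_0-\tau Lx_0)$, so that $(\bar x,\bar y)\in\Fix T$ trivially. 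By \cref{d:Mproj}, $P^M_{\Fix T}(x_0,y_0)$ is the \emph{unique} minimizer of $(x,y)\mapsto\|(x_0,y_0)-(x,y)\|_M$ over $\Fix T$; hence it suffices to show that $(\bar x,\bar y)$ is a minimizer.

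Since $\Fix T$ is a linear subspace and $M$ is self-adjoint and positive semidefinite, a Pythagoras-type expansion (using $\|(x_0,y_0)-(x,y)\|_M^2=\|(x_0,y_0)-(\bar x,\bar y)\|_M^2+2\scal{M((x_0,y_0)-(\bar x,\bar y))}{(\bar x,\bar y)-(x,y)}+\|(\bar x,\bar y)-(x,y)\|_M^2$ and the fact that $(\bar x,\bar y)-(x,y)\in\Fix T$) reduces the claim to the normal equations
$$M\big((x_0,y_0)-(\bar x,\bar y)\big)\in(\Fix T)^\perp=U_1^\perp\times V_1^\perp.$$
Using the block form of $M$ from \cref{e:230629a}, I would rewrite the first component of $M((x_0,y_0)-(\bar x,\bar y))$, namely $\tfrac1\sigma(x_0-\bar x)-L^*(y_0-\bar y)$, as $\tfrac1\sigma\big((x_0-\sigma L^*y_0)-\bar x\big)+L^*\bar y$: the first summand lies in $U_1^\perp$ because $\bar x=P_{U_1}(x_0-\sigma L^*y_0)$, and the second lies in $U^\perp\subseteq U_1^\perp$ because $\bar y\in V_1\subseteq L^{-*}(U^\perp)$ and $U_1\subseteq U$. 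Symmetrically, the second component $-L(x_0-\bar x)+\tfrac1\tau(y_0-\bar y)$ rewrites as $\tfrac1\tau\big((y_0-\tau Lx_0)-\bar y\big)+L\bar x$, whose first summand lies in $V_1^\perp$ since $\bar y=P_{V_1}(y_0-\tau Lx_0)$, and whose second summand lies in $V\subseteq V_1^\perp$ since $\bar x\in U_1\subseteq L^{-1}(V)$ and $V_1\subseteq V^\perp$. This establishes the normal equations, hence that $(\bar x,\bar y)$ is a minimizer, and uniqueness in \cref{d:Mproj} identifies it with $P^M_{\Fix T}(x_0,y_0)$.

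The only delicate points are bookkeeping ones: keeping the $\sigma$ and $\tau$ scalings in exactly the right place so that the shifted points $x_0-\sigma L^*y_0$ and $y_0-\tau Lx_0$ surface, and recording the inclusions $U^\perp\subseteq U_1^\perp$ and $V\subseteq V_1^\perp$ (valid because $U_1\subseteq U$ and $V_1\subseteq V^\perp$, both $U,V$ being closed). One must also resist arguing uniqueness from $\|\cdot\|_M$ alone, since it is only a seminorm, and instead invoke \cref{d:Mproj} (equivalently \cref{t:krista}\cref{t:krista3}). As an alternative route one could compose \cref{t:u*} with \cref{t:krista}\cref{t:krista1} via $P^M_{\Fix T}(u_0)=(M+A)^{-1}CP_{\Fix\Tt}(C^*u_0)$, but since the factor $C$ is not explicitly available in the Chambolle-Pock setting, the direct verification above is the cleaner path.
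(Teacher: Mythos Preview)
Your proof is correct, but it takes a different route from the paper. The paper argues by direct manipulation of the objective: it expands $\|(x,y)-(x_0,y_0)\|_M^2$ via \cref{e:230628b}, uses the orthogonality $\scal{Lx}{y}=0$ (valid because $x\in L^{-1}(V)$ and $y\in V^\perp$) to kill the cross term, and then completes the square to obtain $\tfrac{1}{\sigma}\|x-(x_0-\sigma L^*y_0)\|^2+\tfrac{1}{\tau}\|y-(y_0-\tau Lx_0)\|^2$, which separates into two independent Euclidean projections. You instead verify the variational/normal-equation characterization $M\big((x_0,y_0)-(\bar x,\bar y)\big)\in(\Fix T)^\perp$, deploying the same underlying orthogonality in the equivalent form $L^*\bar y\in U^\perp$ and $L\bar x\in V$. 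The paper's approach is a touch more transparent in that it exhibits the objective as a decoupled sum (so the reader sees immediately \emph{why} the shifted points $x_0-\sigma L^*y_0$ and $y_0-\tau Lx_0$ arise), whereas your normal-equations argument is more systematic and would generalize more readily to settings where completing the square is awkward. Your remark about invoking \cref{d:Mproj}/\cref{t:krista}\cref{t:krista3} for uniqueness rather than $\|\cdot\|_M$ alone is well taken and matches what the paper does.
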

\begin{proof}
Using \cref{e:230628a}, we want to find the 
(unique by \cref{t:krista}\cref{t:krista3}) minimizer of the function 
$(x,y)\mapsto \|(x,y)-(x_0,y_0)\|_M$ subject to 
$x\in U\cap L^{-1}(V)$ and 
$y\in V^\perp \cap L^{-*}(U^\perp)$.
First, squaring $\|(x,y)-(x_0,y_0)\|_M$ and recalling 
\cref{e:230628b} yields
$\frac{1}{\sigma}\|x-x_0\|^2
-2\scal{L(x-x_0)}{y-y_0}+
\frac{1}{\tau}\|y-y_0\|^2$.
Second, expanding, discarding constant terms, and using $Lx\perp y$ results in 
$\frac{1}{\sigma}\|x\|^2
-\frac{2}{\sigma}\scal{x}{x_0}
+2\scal{Lx_0}{y}
+2\scal{Lx}{y_0}+
\frac{1}{\tau}\|y\|^2
-\frac{2}{\tau}\scal{y}{y_0}
=\frac{1}{\sigma}
\big(\|x\|^2-2\scal{x}{x_0-\sigma L^*y_0}
\big)
+\frac{1}{\tau}
\big(\|y\|^2-2\scal{y}{y_0-\tau Lx_0}
\big)
$.
Thirdly, completing the squares (which adds only constant terms here), we obtain 
$\frac{1}{\sigma}\|x-(x_0-\sigma L^*y_0)\|^2
+\frac{1}{\tau}\|y-(y_0-\tau Lx_0)\|^2$
and the proclaimed identity follows. 
\end{proof}

If $(u_k)_\kkk = (x_k,y_k)_\kkk$ is the PPP sequence for Chambolle-Pock, then 
the weak limit $u^*=(x^*,y^*)$ of $(Tu_k)_\kkk$
is given by 
\begin{equation}
\label{e:230628c}
(x^*,y^*)
=
\big(
P_{U\cap L^{-1}(V)}(x_0-\sigma L^*y_0),
P_{V^\perp\cap L^{-*}(U^\perp)}(y_0-\tau Lx_0)
\big)
\end{equation}
because of \cref{t:wconv}\cref{t:wconv2} and \cref{l:230628a}. 
Moreover, combining with \cref{t:sconv}, we obtain 
the \emph{strong convergence} result
\begin{equation}
\label{e:230628e}
P_{U}(x_k-\sigma L^*y_k)\to P_{U\cap L^{-1}(V)}(x_0)
\quad\text{when $y_0=0$ and $(\lambda_k)_\kkk\equiv\lambda$.}
\end{equation}

\begin{remark}
We note that \cref{e:230628c} beautifully 
generalizes \cref{e:230628d}, where
$Y=X$, $L=\Id$ and $\sigma=\tau=1$.
To the best of our knowledge, 
the limit formula \cref{e:230628c} appears to be new, 
even in the classical case where $M$ is positive definite. 
Moreover, \cref{e:230628e} is a nice way to compute algorithmically $P_{U\cap L^{-1}(V)}$ in the case when only $P_V$ is available but $P_{L^{-1}(V)}$ is not. 
\end{remark}

\subsubsection{Specializing \cref{sss:CPtwosup} even further to two lines in $\RR^2$}

\label{sss:2lines}

We now turn to a pleasing special case that allows
for the explicit computation of the spectral radius and the operator norm of $T$.
Keeping the setup of \cref{sss:CPtwosup}, 
we assume additionally that $Y=X$, $L=\Id$, and $\sigma=1/\tau$.
Then the operator $T$ from \cref{e:230629c} 
turns into 
\begin{align}
\label{e:230629d}
T 
&= 
\begin{bmatrix}
P_U & -\frac{1}{\tau}P_U \\
\tau P_{V^\perp}(P_U-P_{U^\perp}) & P_{V^\perp}(P_{U^\perp}-P_U)
\end{bmatrix}.
\end{align}
We now specialize even further to 
$X=\RR^2$ and $\theta\in\left[0,\pi\right[$, where we consider the two lines
\begin{equation}
U = \RR[1,0]{\tran}
\;\;\text{and}\;\;
V = \RR[\cos(\theta),\sin(\theta)]{\tran}
\end{equation}
which have an angle of $\theta$ and for 
which have projection matrices 
(see, e.g., \cite[Section~5]{BBCNPW})
\begin{equation}
P_U = \begin{bmatrix}
1&0\\0&0
\end{bmatrix}
\;\;\text{and}\;\;
P_V = 
\begin{bmatrix}
\cos^2(\theta)&\cos(\theta)\sin(\theta)\\
\cos(\theta)\sin(\theta)&\sin^2(\theta)
\end{bmatrix}. 
\end{equation}
It follows that \cref{e:230629d} turns into
\begin{equation}
\label{eq:Tsym}
T = 
\begin{bmatrix}
1 & 0 & -\frac{1}{\tau} & 0\\
0 & 0 & 0 & 0\\
\tau\,\sin^2 (\theta ) & \tau\,\cos \left(\theta \right)\,\sin \left(\theta \right) & -\sin ^2 (\theta ) & -\cos \left(\theta \right)\,\sin \left(\theta \right)\\
-\tau\,\cos (\theta )\,\sin (\theta) & -\tau\cos^2 (\theta ) & \cos \left(\theta \right)\,\sin(\theta) & \cos^2 (\theta)
\end{bmatrix}. 
\end{equation}
We know from the main results of Bredies et al.\ that 
$T$ has excellent properties with respect to 
the PPP algorithm.
For this particular $T$, we can quantify the key notions:

\begin{lemma}
\label{l:2lines}
The operator $T$ defined in \cref{eq:Tsym} satisfies the following: 
\begin{enumerate}
\item 
\label{l:2lines1}
{\rm {\bf (spectral radius)}}
$\rho(T)=|\cos(\theta)|$, which is \emph{independent} of $\tau$, and minimized with minimum value of $0$ when $\theta=\pi/2$ and maximized with maximum value of $1$ when $\theta=0$. 
\item 
\label{l:2lines2}
{\rm {\bf (operator norm)}}
We have 
\begin{equation}
\|T\| 
= 
\sqrt{1 + \frac{1+\tau^4+(1+\tau^2)\sqrt{1+\tau^4-2\tau^2\cos(2\theta)}}{2\tau^2}}.
\end{equation}

\item
\label{l:2lines3}
{\rm {\bf (bounds)}}
$\sqrt{2}\leq \sqrt{1+\max\{\tau^2,1/\tau^2\}} \leq \|T\| \leq \tau+{1}/{\tau}$,
and the lower bound is attained when $\theta=0$ and $\tau=1$ while the upper bound is attained when $\theta=\pi/2$.
\item 
\label{l:2lines4}
{\rm {\bf (Douglas-Rachford case)}}
When $\tau=1$, we have $\|T\|=\sqrt{2+2\sin(\theta)}$ which is minimized with minimum value $\sqrt{2}$ when $\theta=0$, and maximized with maximum value $2$ when $\theta=\pi/2$.  
\end{enumerate}
\end{lemma}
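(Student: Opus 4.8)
The plan is to exploit a rank-two factorization of the $4\times4$ matrix $T$. Reading off \cref{e:230629d} and abbreviating $R_U := P_U - P_{U^\perp}$, one sees that the second block-column of $T$ is $-\tfrac1\tau$ times the first, so
\[
T = GE,\qquad
G := \begin{bmatrix} P_U\\ \tau P_{V^\perp}R_U\end{bmatrix},\qquad
E := \begin{bmatrix}\Id & -\tfrac1\tau\Id\end{bmatrix},
\]
where, since $X=\RR^2$ with the stated lines $U,V$, $G$ is a concrete $4\times2$ matrix and $E$ a concrete $2\times4$ matrix. The whole proof rests on this factorization together with the elementary fact that for rectangular matrices $P,Q$ the products $PQ$ and $QP$ have the same nonzero eigenvalues (with multiplicities), which turns the relevant $4\times4$ spectral problems into $2\times2$ ones; making sure this fact is legitimately applied both to $T$ itself and to $T\tran T$ is really the only point that needs care.

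For \cref{l:2lines1}, a direct computation gives $EG = P_U - P_{V^\perp}R_U$, a $2\times2$ matrix with trace $2\cos^2\theta$ and determinant $\cos^2\theta$; hence its eigenvalues $\cos^2\theta \pm i\abs{\cos\theta\sin\theta}$ each have modulus $\sqrt{\cos^4\theta+\cos^2\theta\sin^2\theta}=\abs{\cos\theta}$. Since the nonzero eigenvalues of $T=GE$ coincide with those of $EG$, while the remaining two eigenvalues of $T$ are $0$, we get $\rho(T)=\abs{\cos\theta}$, which is manifestly independent of $\tau$; on $[0,\pi[$ it equals $0$ precisely at $\theta=\pi/2$ and $1$ precisely at $\theta=0$.

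For \cref{l:2lines2}, because $T$ is real, $\|T\|^2 = \rho(T\tran T)$ and $T\tran T = E\tran(G\tran G)E$; applying the nonzero-eigenvalue fact once more (and noting $T\neq 0$, so $\rho(T\tran T)>0$) gives $\|T\|^2 = \rho\big(G\tran G\,EE\tran\big)=\tfrac{\tau^2+1}{\tau^2}\,\rho(G\tran G)$, since one computes $EE\tran = \tfrac{\tau^2+1}{\tau^2}\Id$. Next, $G\tran G = P_U + \tau^2 R_U P_{V^\perp}R_U$ is a positive semidefinite $2\times2$ matrix with trace $1+\tau^2$ and determinant $\tau^2\cos^2\theta$, whence $\rho(G\tran G) = \tfrac12\big(1+\tau^2+\sqrt{(1+\tau^2)^2-4\tau^2\cos^2\theta}\big)$. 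Rewriting the discriminant via $4\cos^2\theta = 2+2\cos2\theta$ as $1+\tau^4-2\tau^2\cos2\theta$, multiplying by $\tfrac{\tau^2+1}{\tau^2}$, and using $\tfrac{(1+\tau^2)^2}{2\tau^2} = 1 + \tfrac{1+\tau^4}{2\tau^2}$ produces exactly the stated formula for $\|T\|^2$.

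Finally, \cref{l:2lines3} and \cref{l:2lines4} are elementary estimates on the single free quantity $D_\theta := \sqrt{1+\tau^4-2\tau^2\cos2\theta}$. Since $\cos2\theta\in[-1,1]$, we have $\abs{1-\tau^2}=\sqrt{(1-\tau^2)^2}\le D_\theta\le\sqrt{(1+\tau^2)^2}=1+\tau^2$, with the lower value attained at $\theta=0$ and the upper at $\theta=\pi/2$. Substituting $D_\theta\ge\abs{1-\tau^2}$ and separating the cases $\tau\ge1$ and $\tau\le1$ yields $\|T\|^2 \ge 1+\max\{\tau^2,1/\tau^2\}\ge 2$, where the value $2$ forces $\theta=0$ and $\tau=1$; substituting $D_\theta\le1+\tau^2$ yields $\|T\|^2 \le 1+\tfrac{1+\tau^4+(1+\tau^2)^2}{2\tau^2} = 2+\tau^2+\tfrac1{\tau^2}=(\tau+1/\tau)^2$, with equality only at $\theta=\pi/2$; this is \cref{l:2lines3}. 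For \cref{l:2lines4}, putting $\tau=1$ in the formula gives $\|T\|^2 = 2+\sqrt{2-2\cos2\theta} = 2+2\abs{\sin\theta} = 2+2\sin\theta$ on $[0,\pi[$, which runs from $\sqrt2$ at $\theta=0$ to $2$ at $\theta=\pi/2$. Beyond the factorization step, everything here is routine bookkeeping with traces, determinants, and the identities $2\cos^2\theta = 1+\cos2\theta$ and $2-2\cos2\theta = 4\sin^2\theta$.
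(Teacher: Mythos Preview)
Your proof is correct and takes a genuinely different route from the paper's. The paper simply computes the characteristic polynomials of the $4\times4$ matrices $T$ and $T\tran T$ directly, obtaining $\zeta^2\big(\zeta^2-2\cos^2(\theta)\zeta+\cos^2(\theta)\big)$ and $\tfrac{\zeta^2}{\tau^2}\big(\tau^2\zeta^2-(1+\tau^2)^2\zeta+(1+\tau^2)^2\cos^2\theta\big)$ respectively, and then reads off the eigenvalues; parts \cref{l:2lines3} and \cref{l:2lines4} are then derived from the resulting formula exactly as you do. Your rank-two factorization $T=GE$ together with the $\operatorname{spec}(PQ)\smallsetminus\{0\}=\operatorname{spec}(QP)\smallsetminus\{0\}$ trick replaces both $4\times4$ determinant computations by $2\times2$ trace/determinant calculations, and in addition explains \emph{a priori} why two eigenvalues of $T$ and of $T\tran T$ vanish. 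The paper's approach is entirely mechanical (and CAS-friendly); yours is more conceptual, shorter by hand, and makes the $\tau$-independence of $\rho(T)$ transparent since $EG=P_U-P_{V^\perp}R_U$ no longer contains~$\tau$.
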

\begin{proof}
The matrix $T$ has 
the characteristic polynomial (in the variable $\zeta$):
\begin{equation}
\zeta^2\big(\zeta^2-2\cos^2(\theta)\zeta+\cos^2(\theta) \big);
\end{equation}
hence, the eigenvalues of $T$ are
$0,0,|\cos(\theta)|\big(|\cos(\theta)|\pm\mathrm{i}|\sin(\theta)|\big)$, with absolute values\\ $0,0,|\cos(\theta)|\sqrt{|\cos(\theta)|^2+|(\pm|\sin(\theta)|)|^2}=|\cos(\theta)|$. Thus we obtain that the
spectral radius of $T$ to be
\begin{equation}
\rho(T) = |\cos(\theta)|.
\end{equation}
This verifies \cref{l:2lines1}.

Next, the matrix $T{\tran}T$ has
the characteristic polynomial (in the variable $\zeta$)
\begin{equation}
\frac{\zeta^2}{\tau^2}\Big( \tau^2\zeta^2 -(1+\tau^2)^2\zeta +(1+\tau^2)^2\cos^2(\theta)\Big)
\end{equation}
and thus it has eigenvalues $0,0$ and 
\begin{subequations}
\begin{align}
&\hspace{-1cm} \frac{(1+\tau^2)^2\pm \sqrt{(1+\tau^2)^4-4\tau^2(1+\tau^2)^2\cos^2(\theta)}}{2\tau^2}\\
&=
\frac{(1+\tau^2)^2\pm(1+\tau^2)\sqrt{1+\tau^4+2\tau^2(1-2\cos^2(\theta))}}{2\tau^2}
\\
&=
\frac{(1+\tau^2)^2\pm(1+\tau^2)\sqrt{1+\tau^4-2\tau^2\cos(2\theta)}}{2\tau^2}
\\
&=
1 + \frac{1+\tau^4\pm(1+\tau^2)\sqrt{1+\tau^4-2\tau^2\cos(2\theta)}}{2\tau^2}.
\end{align}
\end{subequations}
Therefore,
\begin{align}
\|T\| 
&= 
\sqrt{1 + \frac{1+\tau^4+(1+\tau^2)\sqrt{1+\tau^4-2\tau^2\cos(2\theta)}}{2\tau^2}}
\end{align}
and we have verified \cref{l:2lines2}.

Turning to \cref{l:2lines3}, 
we see that for fixed $\tau$ that 
$\|T\|$ is smallest (resp.\ largest)
when $\theta=0$ (resp.\ $\theta=\pi/2$) in which case 
$\|T\|$ becomes
$
\sqrt{1+({1+\tau^4+ (1+\tau^2)|1-\tau^2|})/({2\tau^2})}$ (resp.  $\sqrt{1+({1+\tau^4+ (1+\tau^2)(1+\tau^2)})/({2\tau^2})}\big)
$
which further simplifies to $\sqrt{1+\max\{\tau^2,1/\tau^2\}}$  
(resp.\ $\tau+1/\tau$). 

Finally, \cref{l:2lines4} follows by
simplifying \cref{l:2lines2} with $\tau=1$. 
\end{proof}

\begin{remark}
\cref{l:2lines}\cref{l:2lines3} impressively shows that studying the operator $T$ from \cref{eq:Tsym} is outside the realm of fixed point theory of classical nonexpansive mappings. In such cases, the spectral radius is helpful (for more results in this direction, see \cite{BBCNPW2}).
We also point out that the operator $T$
from \cref{eq:Tsym} is firmly nonexpansive --- not with respect to the standard Hilbert space norm on $X^2$, but rather with respect to the seminorm induced by the preconditioner $M$
(see \cite[Lemma~2.6]{BCLN} for details).
\end{remark}

\subsubsection{Numerical experiment}

In this section, let 
$U$ be a closed affine subspace of $X$ and 
let $b\in Y$.
Let $(x,y)\in X\times Y$. 
We assume that $A_1 = N_U$ and $A_2 = N_{\{b\}}$. 
Then \cref{e:CPnoA2inv} turns into 
\begin{equation}\label{e:CPnoA2invb}
T(x,y) = 
\begin{bmatrix}
P_U(x-\sigma L^*y)\\[0.5em]
y+\tau L(2P_U(x-\sigma L^*y)-x)-\tau b
\end{bmatrix}.
\end{equation}
Now assume that $x\in \zer(A_1+L^*A_2L)=U\cap L^{-1}(b)$.
Then
$y\in (-L^{-*}A_1x)\cap (A_2Lx)$
$\Leftrightarrow$ 
$[L^*(-y)\in A_1x \land y\in A_2Lx]$
$\Leftrightarrow$ 
$[-L^*y\in (U-U)^\perp \land y\in (\{b\}-\{b\})^\perp]$
$\Leftrightarrow$ 
$[L^*y\in -(U-U)^\perp=(U-U)^\perp \land y\in Y]$
$\Leftrightarrow$ 
$y\in L^{-*}((U-U)^\perp)$.
It follows from \cref{e:zerACP} that 
\begin{equation}
\zer A = \Fix T = \big(U\cap L^{-1}(b)\big)
\times L^{-*}\big((U-U)^\perp \big). 
\end{equation}
Arguing similarly to the derivation of 
\cref{e:230628c} and \cref{e:230628e}, we obtain 
that the weak limit 
$u^* = (x^*,y^*)$ of 
the PPP sequence $(u_k)_\kkk = (x_k,y_k)_\kkk$ 
of Chambolle-Pock is given by 
\begin{equation}
\label{e:240425a}
(x^*,y^*) = 
\big(P_{U\cap L^{-1}(b)}(x_0-\sigma L^*y_0),
P_{L^{-*}((U-U)^\perp)}(y_0-\tau Lx_0)\big)
\end{equation}
and that 
\begin{equation}
\label{e:240425b}
P_U(x_k-\sigma L^*y_k) \to 
P_{U\cap L^{-1}(b)}(x_0) 
\quad \text{when $y_0=0$ and $(\lambda_k)_\kkk \equiv \lambda$.}
\end{equation}

We now illustrate \cref{e:240425b} numerically using 
a set up motivated by 
Computed Tomography (CT), which uses X-ray measurements to reconstruct cross-sectional body images (see, e.g., 
 \cite{Herman}). 
The resulting inverse problem we consider here amounts to solving 
$Lx=b$ where $L\in \RR^{6750\times 2500}$ and 
$b\in \RR^{6750}$: indeed, the matrix $L$ and the vector $b$ were generated from  
the $50\times 50$ Shepp-Logan phantom image \cite{SL}, reshaped to a vector 
$x^*\in \RR^{2500}$ 
using the \texttt{Matlab} 
AIR Tools II package\footnote{We 
used the command \texttt{paralleltomo(50,0:2:178,75)}. 
In fact, the experiments were performed in \texttt{GNU Octave} \cite{Octave}.} \cite{phantomcode}. 
In turn, the closed linear subspace $U$ was obtained by using 
the \emph{a priori} information that the first and last two columns of the phantom image must be black, i.e., 
they contain only zeros. 
Because the matrix $L^*L$ is smaller than $LL^*$, we compute 
$\|L\|$ via $\|L\|^2=\|L^*L\|$ and then set 
$\sigma := \tau := 0.99/\|L\|$. 
In \Cref{figure:phantom} we  present the reconstructed phantom images generated after $100$ and $10000$ iterations of 
Chambolle-Pock, with starting point 
$(x_0,y_0) = (0,0)$ and $\lambda_k\equiv 1$, 
along with the exact phantom.

\begin{figure}[ht]
  \centering
  \begin{tabular}{c c c c c}
   \includegraphics[scale=0.35]{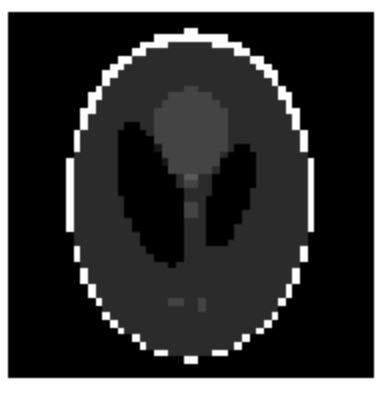}
   &
   &
   \includegraphics[scale=0.35]{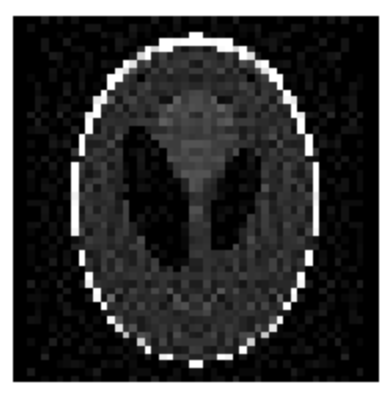}
    &  
    &
   \includegraphics[scale=0.35]{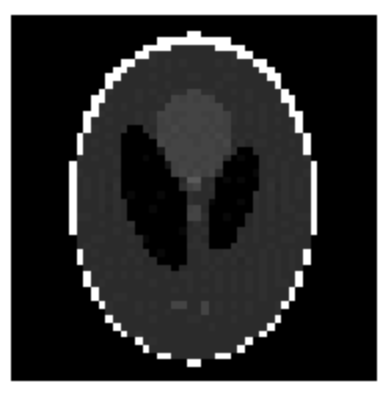}
   \\
   (a) Exact phantom image
   &
   &
   (b) Image generated by $x_{100}$
   &
  &
   (c) Image generated by $x_{10000}$
  \end{tabular}
  \caption{The exact phantom image and reconstructions generated by
  $x_{100}$ and $x_{10000}$.}
  \label{figure:phantom}
\end{figure}

\subsection{Ryu}

\subsubsection{General setup}

Let $X$ be a real Hilbert space, and 
$A_1,A_2,A_3$ be maximally monotone on $X$.
The goal is to find a point in 
\begin{equation}
 Z := \zer(A_1+A_2+A_3)
\end{equation}
which we assume to be nonempty. 
The three-operator splitting algorithm by Ryu \cite{Ryu} is designed to solve this problem. 
It was pointed out in \cite{BCN} that this fits the framework by Bredies et al. 
Indeed, assume that 
\begin{align}
\label{e:gymC}
H = X^5, \quad D = X^2, \quad\text{and}\quad 
C = \begin{bmatrix}
\Id & 0 \\
0 & \Id \\
-\Id & -\Id \\
\Id & 0 \\
0 & \Id
\end{bmatrix}. 
\end{align}
Then 
\begin{equation}
\label{e:gymC*}
C^* = 
\begin{bmatrix}
\Id & 0 & -\Id & \Id & 0 \\
0 & \Id & -\Id & 0 & \Id
\end{bmatrix}
\end{equation}
is clearly surjective and 
\begin{equation}
\label{e:gymM}
M = CC^* = 
\begin{bmatrix}
\Id & 0 &-\Id & \Id & 0\\
0 &\Id &-\Id & 0 & \Id\\
-\Id & -\Id & 2\Id & -\Id & -\Id \\
\Id & 0 & -\Id & \Id & 0 \\
0 & \Id &-\Id & 0 & \Id
\end{bmatrix}. 
\end{equation}
Now assume that 
\begin{equation}
A = 
\begin{bmatrix}
2A_1+\Id & 0 &\Id & -\Id & 0\\
-2\Id &2A_2+\Id &\Id & 0 & -\Id\\
-\Id & -\Id & 2A_3 & \Id & \Id \\
\Id & 0 & -\Id & 0 & 0 \\
0 & \Id &-\Id & 0 & 0 
\end{bmatrix}, 
\end{equation}
which is maximally monotone\footnote{Indeed, 
$A$ is the sum of the operator 
$(x_1,\ldots,x_5)\mapsto 2A_1x_1\times 2A_2x_2\times 2A_3x_3 
\times \{0\} \times \{0\}$, 
the gradient of $(x_1,\ldots,x_5)\mapsto \frac{1}{2}\norm{x_1-x_2}^2$, and a skew linear operator.} on $H$.
Next, given $(x_1,\ldots,x_5)\in H$, we have 
\begin{subequations}
\begin{align}
(x_1,\ldots,x_5)\in\zer A
&\Leftrightarrow 
\begin{cases}
0\in (2A_1+\Id)x_1+x_3-x_4\\
0\in-2x_1+(2A_2+\Id)x_2+x_3-x_5\\
0\in -x_1-x_2+2A_3x_3 +x_4+x_5\\
0=x_1-x_3\\
0=x_2-x_3 
\end{cases}\\
&\Leftrightarrow 
\begin{cases}
x = x_1 = x_2 = x_3\\
x_4 \in 2A_1x + 2x\\
x_5 \in 2A_2x\\
-x_4-x_5 \in 2A_3x-2x.
\end{cases}
\end{align}
\end{subequations}
This gives the description 
\begin{equation}
\label{e:Ryuzer}
\zer A = \menge{(z,z,z,x_4,x_5)\in H}{z\in Z\land\thalb x_4\in A_1z+z\land \thalb x_5 \in A_2z\land -\thalb x_4-\thalb x_5 \in A_3z-z}. 
\end{equation}
Next, one verifies that 
\begin{equation}
\label{e:gymbla}
(M+A)^{-1}:X^5\to X^5:\begin{bmatrix}
x_1\\x_2\\x_3\\x_4\\x_5
\end{bmatrix}
\mapsto
\begin{bmatrix}
y_1\\
y_2\\
y_3\\
y_4\\
y_5
\end{bmatrix}=
\begin{bmatrix}
\J{A_1}\big(\thalb x_1\big)\\[+1mm]
\J{A_2}\big(\thalb x_2+y_1\big)\\[+1mm]
\J{A_3}\big(\thalb x_3+y_1+y_2\big)\\[+1mm]
x_4-2y_1+2y_3\\[+1mm]
x_5-2y_2+2y_3
\end{bmatrix}, 
\end{equation}
which is clearly single-valued, full domain, and Lipschitz continuous.
Combining \cref{e:gymbla} and \cref{e:gymM}, we end 
up with 
\begin{subequations}
\begin{align}
\label{e:gymblu}
T = (M+A)^{-1}M:X^5 &\to X^5\\
\begin{bmatrix}
x_1\\x_2\\x_3\\x_4\\x_5
\end{bmatrix}
&\mapsto
\begin{bmatrix}
y_1\\
y_2\\
y_3\\
y_4\\
y_5
\end{bmatrix}=
\begin{bmatrix}
\J{A_1} \parens[\big]{\tfrac12 (x_1-x_3+x_4)}\\[+1mm]
\J{A_2} \parens[\big]{\tfrac12 (x_2-x_3+x_5)+y_1}\\[+1mm]
\J{A_3} \parens[\big]{\tfrac12 (-x_1-x_2+2x_3-x_4-x_5)+y_1+y_2}\\[+1mm]
x_1-x_3+x_4-2y_1+2y_3\\[+1mm]
x_2-x_3+x_5-2y_2+2y_3
\end{bmatrix}. 
\end{align}
\end{subequations}
Turning to $\Tt$, we verify that 
$\Tt = C^*(M+A)^{-1}C:D\to D$ is given by 
\begin{equation}
\label{e:gymblo}
\Tt \colon 
\begin{bmatrix}
w_1\\w_2
\end{bmatrix}
\mapsto
\begin{bmatrix}
w_1\\w_2
\end{bmatrix} 
+ 
\begin{bmatrix}
y_3-y_1\\y_3-y_2
\end{bmatrix}, 
\text{~~where}
\begin{bmatrix}
y_1\\
y_2\\
y_3
\end{bmatrix}=
\begin{bmatrix}
\J{A_1} \parens[\big]{\tfrac12 w_1}\\[+1mm]
\J{A_2} \parens[\big]{\tfrac12 w_2+z_1}\\[+1mm]
\J{A_3} \parens[\big]{\tfrac12(-w_1-w_2)+y_1+y_2}
\end{bmatrix}. 
\end{equation}
The operator $\Tt$ is a scaled version\footnote{If we denote the original Ryu operator by $T_{\rm R}$, then 
$\Tt(w) = 2T_{\rm R}(w/2)$. So, 
$\Fix \Tt = 2\Fix T_{\rm R}$ and $\Tt^k(w)=2T_{\rm R}^k(w/2)$.} of 
the original Ryu operator. 
Combining \cref{e:Ryuzer} and \cref{e:FixTtilde}, 
we obtain 
\begin{equation}
\label{e:RyuFTd}
\Fix\Tt =\\ \menge{(w_1,w_2)\in H}{z\in Z\land\thalb w_1\in A_1z+z\land \thalb w_2 \in A_2z\land -\thalb w_1-\thalb w_2 \in A_3z-z}. 
\end{equation}

\subsubsection{Specialization to normal cones of linear subspaces}
\label{sss:Ryu3sup}

We now assume that each $A_i = N_{U_i}$, 
where each $U_i$ is a closed linear subspace of $X$. 
Then 
$Z=\zer(A_1+A_2+A_3)=U_1\cap U_2 \cap U_3$. 
Then \cref{e:Ryuzer} turns into 
\begin{equation}
\label{e:RyuFixTDefn}
\zer A = \Fix T = \menge{(z,z,z,x_4,x_5)\in H}{z\in Z\land x_4\in U_1^\perp +2z\land x_5 \in  U_2^\perp \land x_4+x_5 \in U_3^\perp +2z}, 
\end{equation}
while \cref{e:RyuFTd} becomes
\begin{equation}\label{e:RyuFixTTDefn}
\Fix \Tt=
\menge{(w_1,w_2)\in D}{z\in Z\land w_1\in U_1^\perp +2z\land w_2 \in  U_2^\perp \land w_1+w_2 \in U_3^\perp +2z}.
\end{equation}

We now provide an alternative description of the
two fixed point sets.

\begin{lemma}[fixed point sets]
\label{lem:RyuFix}
Set 
$S := \menge{(z,z,z,2z,0)}{z\in Z=U_1\cap U_2\cap U_3}\subseteq X^5$ and 
$\Delta_2 :=\menge{(x,x)}{x\in X}\subseteq X^2$. 
Then\footnote{When $S_1\perp S_2$, we also write $S_1\oplus S_2$ for $S_1+S_2$ to stress the orthogonality.}
\begin{equation}
\label{lem:RyuFixi} 
\Fix T= S  \oplus \big(\{0\}^3\times \big( (U_1^\perp \times U_2^\perp) \cap (\Delta_2^\perp  + (\{0\}\times U_3^\perp))\big)\big)
\end{equation}
and 
\begin{equation}
\label{lem:RyuFixii} 
\Fix \Tt 
= (Z\times \{0\}) \oplus \big((U_1^\perp \times U_2^\perp)\cap (\Delta_2^\perp  + (\{0\}\times U_3^\perp))\big). 
\end{equation}
\end{lemma}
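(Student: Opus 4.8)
The plan is to read off both identities directly from the explicit descriptions of $\Fix T$ and $\Fix\Tt$ already recorded in \cref{e:RyuFixTDefn} and \cref{e:RyuFixTTDefn}, by means of a change of variables that separates a ``diagonal'' component from a ``complementary'' component. I would begin with \cref{lem:RyuFixii}, the $\Fix\Tt$ case, since \cref{lem:RyuFixi} follows by the same manipulation with two extra coordinates carried along. Given $(w_1,w_2)\in\Fix\Tt$, \cref{e:RyuFixTTDefn} supplies $z\in Z$ with $w_1-2z\in U_1^\perp$, $w_2\in U_2^\perp$, and $w_1+w_2-2z\in U_3^\perp$. Setting $v_1:=w_1-2z$ and $v_2:=w_2$ gives $v_1\in U_1^\perp$, $v_2\in U_2^\perp$, $v_1+v_2=w_1+w_2-2z\in U_3^\perp$, and $(w_1,w_2)=(2z,0)+(v_1,v_2)$. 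Since $Z$ is a linear subspace, $2z$ runs over all of $Z$, so $(2z,0)\in Z\times\{0\}$; and conversely any $(z',0)+(v_1,v_2)$ with $z'\in Z$, $v_1\in U_1^\perp$, $v_2\in U_2^\perp$, $v_1+v_2\in U_3^\perp$ satisfies the three defining conditions of \cref{e:RyuFixTTDefn} (take $z=z'/2\in Z$). Hence $\Fix\Tt$ equals $Z\times\{0\}$ plus the set of such pairs $(v_1,v_2)$.

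The second step is to recognize that set of pairs. Since $\Delta_2^\perp=\menge{(t,-t)}{t\in X}$, we have
\[
\Delta_2^\perp+(\{0\}\times U_3^\perp)=\menge{(t,\,-t+u)}{t\in X,\ u\in U_3^\perp}=\menge{(a,b)\in X^2}{a+b\in U_3^\perp},
\]
so intersecting with $U_1^\perp\times U_2^\perp$ produces exactly $\menge{(v_1,v_2)}{v_1\in U_1^\perp,\ v_2\in U_2^\perp,\ v_1+v_2\in U_3^\perp}$, i.e.\ the complementary component found above; this yields \cref{lem:RyuFixii} as a sum of sets. Applying the identical substitution $v_1:=x_4-2z$, $v_2:=x_5$ to a generic element $(z,z,z,x_4,x_5)$ of $\Fix T$ from \cref{e:RyuFixTDefn} writes it as $(z,z,z,2z,0)+(0,0,0,v_1,v_2)$ with $(z,z,z,2z,0)\in S$ and $(0,0,0,v_1,v_2)$ in $\{0\}^3\times\big((U_1^\perp\times U_2^\perp)\cap(\Delta_2^\perp+(\{0\}\times U_3^\perp))\big)$, and the reverse inclusion is checked the same way; this gives \cref{lem:RyuFixi} as a sum of sets.

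It remains to upgrade both sums to \emph{orthogonal} direct sums, which reduces to a single inner-product computation. For \cref{lem:RyuFixii}, if $(2z,0)\in Z\times\{0\}$ and $(v_1,v_2)$ lies in the complementary component, then $\scal{(2z,0)}{(v_1,v_2)}=2\scal{z}{v_1}=0$ because $z\in Z\subseteq U_1$ while $v_1\in U_1^\perp$; the analogous computation $\scal{(z,z,z,2z,0)}{(0,0,0,v_1,v_2)}=2\scal{z}{v_1}=0$ settles \cref{lem:RyuFixi}. Orthogonality of the two summands also forces their intersection to be $\{0\}$, so each sum is a genuine direct sum and the $\oplus$ notation is justified. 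I do not anticipate a real obstacle here: the only points needing a moment of care are the harmless factor of $2$ (absorbed because $Z$, hence $2Z=Z$, is a subspace) and the reinterpretation of the coupling condition $v_1+v_2\in U_3^\perp$ through $\Delta_2^\perp$; everything else is routine bookkeeping with the explicit form of $\zer A$ already in hand.
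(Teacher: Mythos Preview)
Your proposal is correct and follows essentially the same route as the paper: both derive the decompositions directly from \cref{e:RyuFixTDefn} and \cref{e:RyuFixTTDefn} via the change of variables separating the $Z$-diagonal piece from the $(v_1,v_2)$ piece, invoke $2Z=Z$, and obtain orthogonality from $Z\subseteq U_1\perp U_1^\perp$. The paper's proof is simply terser (it calls \cref{lem:RyuFixi} a ``reformulation'' and points to these same ingredients), whereas you spell out the substitution and the identification of $\Delta_2^\perp+(\{0\}\times U_3^\perp)$ explicitly.
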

\begin{proof}
Recall that $\Delta_2^\perp = \menge{(x,-x)}{x\in X}$. 
The identity \cref{lem:RyuFixi} is  
a reformulation of \cref{e:RyuFixTDefn}. 
To obtain \cref{lem:RyuFixii}, combine 
\cref{e:RyuFixTTDefn}
with the fact that $2Z=Z$. 
The orthogonality statements are a consequence of 
$Z\subseteq U_1\perp U_1^\perp$.
\end{proof}

\begin{corollary}[projections]
Set $E := (U_1^\perp \times U_2^\perp)\cap (\Delta_2^\perp  + (\{0\}\times U_3^\perp))$, 
where $\Delta_2$ is as in 
\cref{lem:RyuFix}. 
Then 
\begin{equation}
\label{lem:RyuFixiii}
(\forall w=(w_1,w_2)\in D=X^2)\quad
P_{\Fix \Tt}w=\big(P_Z(w_1),0\big)+P_E(w).
\end{equation}
Now let 
$u = (u_1,u_2,u_3,u_4,u_5)\in H = X^5$, 
set $w := C^*u$, and 
$w^*:=P_{\Fix \Tt}(w)$. 
Then 
\begin{equation}
\label{lem:RyuFixiv}
P_{\Fix T}^M(u)=\big(
\thalb P_Z w_1,
\thalb P_Z w_1,
\thalb P_Z w_1, 
w_1^\ast, 
w_2^\ast
\big)
\end{equation}
\end{corollary}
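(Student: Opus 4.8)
The plan is to treat the two identities separately, both times exploiting the structural description of the fixed point sets from \cref{lem:RyuFix}.

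For \cref{lem:RyuFixiii}, I would invoke \cref{lem:RyuFix}\cref{lem:RyuFixii}, which exhibits $\Fix\Tt$ as the \emph{orthogonal} direct sum $(Z\times\{0\})\oplus E$ of two closed linear subspaces of $D=X^2$. Since the projection onto an orthogonal sum of closed subspaces is the sum of the individual projections, $P_{\Fix\Tt}=P_{Z\times\{0\}}+P_E$; and because $P_{Z\times\{0\}}(w_1,w_2)=(P_Z w_1,0)$, this yields \cref{lem:RyuFixiii} at once.

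For \cref{lem:RyuFixiv}, the first step is to note that, since $w=C^*u$, \cref{t:krista}\cref{t:krista1} (with the uniqueness guaranteed by \cref{d:Mproj}) gives $P_{\Fix T}^M(u)=(M+A)^{-1}Cw^*$, where $w^*=(w_1^*,w_2^*)=P_{\Fix\Tt}(w)$. It then remains to evaluate $(M+A)^{-1}Cw^*$: feed $Cw^*=(w_1^*,w_2^*,-w_1^*-w_2^*,w_1^*,w_2^*)$ (recall \cref{e:gymC}) into the explicit formula \cref{e:gymbla}, using $\J{A_i}=P_{U_i}$. Writing $P_E w=(e_1,e_2)$, part \cref{lem:RyuFixiii} says $w_1^*=P_Z w_1+e_1$ and $w_2^*=e_2$, with $e_1\in U_1^\perp$ and $e_2\in U_2^\perp$; and since $(e_1,e_2)\in\Delta_2^\perp+(\{0\}\times U_3^\perp)$ while the sum map $(a,b)\mapsto a+b$ annihilates $\Delta_2^\perp$ and carries $\{0\}\times U_3^\perp$ into $U_3^\perp$, one also gets $e_1+e_2\in U_3^\perp$. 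Using $P_Z w_1\in Z=U_1\cap U_2\cap U_3$, each resolvent in \cref{e:gymbla} collapses to a single vector: $y_1=P_{U_1}(\thalb w_1^*)=\thalb P_Z w_1$, then $y_2=P_{U_2}(\thalb w_2^*+y_1)=\thalb P_Z w_1$, then $y_3=P_{U_3}(\thalb(-w_1^*-w_2^*)+y_1+y_2)=P_{U_3}(\thalb P_Z w_1-\thalb(e_1+e_2))=\thalb P_Z w_1$, whence $y_4=w_1^*-2y_1+2y_3=w_1^*$ and $y_5=w_2^*-2y_2+2y_3=w_2^*$ --- exactly \cref{lem:RyuFixiv}.

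The resolvent bookkeeping above is routine; the single step requiring care is the claim $e_1+e_2\in U_3^\perp$, since $P_E w$ has no closed form and only the membership relations $e_i\in U_i^\perp$ and $(e_1,e_2)\in\Delta_2^\perp+(\{0\}\times U_3^\perp)$ are at our disposal. But this is precisely what forces $P_{U_1}$, $P_{U_2}$, and $P_{U_3}$ to each output the common vector $\thalb P_Z w_1$, so once it is in place the rest is mechanical.
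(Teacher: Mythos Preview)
Your proposal is correct and follows essentially the same approach as the paper: both parts rely on \cref{lem:RyuFix} for the orthogonal decomposition of $\Fix\Tt$, then invoke \cref{t:krista}\cref{t:krista1} to reduce $P_{\Fix T}^M(u)$ to $(M+A)^{-1}Cw^*$ and evaluate via \cref{e:gymbla} with $\J{A_i}=P_{U_i}$. The only cosmetic difference is that the paper parametrizes $P_E w$ explicitly as $(v_1,-v_1+v_3)$ with $v_i\in U_i^\perp$, whereas you argue abstractly that the sum map kills $\Delta_2^\perp$ to obtain $e_1+e_2\in U_3^\perp$; the two are equivalent.
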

\begin{proof}
The identity \cref{lem:RyuFixiii} 
follows directly from \cref{lem:RyuFixii}. 
We now tackle 
\cref{lem:RyuFixiv}. 
From \cref{t:krista}\cref{t:krista1},
we get
\begin{equation}\label{e:PMFixTwstar}
P^M_{\Fix T}(u)
= (M+A)^{-1}CP_{\Fix\Tt}(C^* u)
= (M+A)^{-1}CP_{\Fix\Tt}(w)
= (M+A)^{-1}Cw^*. 
\end{equation}
By \cref{lem:RyuFixiii}, 
$w^*= (z+v_1,v_2)=(z+v_1,-v_1+v_3)$, 
where $z:=P_Z(w_1)$ and each $v_i\in U_i^\perp$.
Hence $Cw^* = 
(z+v_1,v_2,-z-v_3,z+v_1,v_2)
$ using \cref{e:gymC}. 
In view of \cref{e:gymbla}, we obtain 
\begin{align}
(M+A)^{-1}Cw^* 
&= (M+A)^{-1}\begin{bmatrix}
z+v_1\\
v_2\\
-z-v_3\\
z+v_1\\
v_2
\end{bmatrix}
= 
\begin{bmatrix}[1.3]
P_{U_1}\big(\frac12(z+v_1)\big)=\frac12 z\\
P_{U_2}\big(\frac12 v_2 +\frac12 z\big)=\frac12 z\\
P_{U_3}\big(-\frac12(z+v_3)+z\big)=\frac12 z\\
z+v_1\\
v_2
\end{bmatrix}
\end{align}
and we are done. 
\end{proof}

Of course if $(u_k)_\kkk$ is the PPP sequence for Ryu, then the weak limit of $(Tu_k)_\kkk$ is $P^M_{\Fix T}(u_0)$ due to \crefpart{t:wconv}{t:wconv3}. This is given by \cref{lem:RyuFixiv} with $u$ replaced by the starting point $u_0$. 
The weak limit of the rPPP sequence $(w_k)_\kkk$ 
is given by \cref{lem:RyuFixiii} (with $w$ replaced by the starting point $w_0$) --- this formula 
was observed already in \cite[Lemma~4.1]{BSW}.

\subsection{Malitsky-Tam}

\subsubsection{General setup}

For $n\geq 3$, we consider $n$ maximally monotone operators $A_1,\dots,A_n$ on the Hilbert space $X$. The goal becomes to find a point in
\begin{equation}
Z:=\zer(A_1+\dots+A_n)
\end{equation}
which we assume to be nonempty. The splitting algorithm by Malitsky and Tam \cite{MT} can deal with this problem and also fits the structure described in \cite{BCN}. Assume that
\begin{equation}
H=X^{2n-1},\;\; D=X^{n-1}, \;\;\text{and} \;\;C=\left[\begin{array}{rrrrr}
\Id\\-\Id&\Id\\&-\Id&\ddots&\\&&\ddots&\Id\\&&&-\Id\\\hline\\[-3mm]
\Id\\&\Id\\&&\ddots\\&&&\Id
\end{array}\right]\colon D\to H.
\end{equation}
Then
\begin{equation}
C^\ast=
\left[\begin{array}{rrlr|rrr}
\Id&-\Id&&&\Id\\&\ddots&\ddots&&&\ddots\\&&\Id&-\Id&&&\Id
\end{array}\right]
\colon H\to D \colon
\begin{bmatrix}
x_1\\\vdots\\x_n\\v_1\\\vdots\\v_{n-1}
\end{bmatrix}
\mapsto
\begin{bmatrix}
x_1-x_2+v_1\\
\vdots\\
x_{n-1}-x_{n}+v_{n-1}
\end{bmatrix}
\end{equation}
is surjective.
Using $C$ and $C^\ast$, we compute
\begin{equation}\label{e:MTM}
  M=\left[\begin{array}{rrrrrr|rrrrrrrr}
    \Id &-\Id   &       &       &       &       & \Id   &      &            \\
   -\Id & 2\Id  &-\Id   &       &       &       &-\Id   &\Id   &            \\
        &\rddots&\rddots&\rddots&       &       &       &-\Id  &\ddots      \\
        &       &-\Id   & 2\Id  &-\Id   &       &       &      &\ddots&\Id\\[0.5em]
        &       &       &-\Id   & \Id   &       &       &      &      &-\Id\\\hline
    \Id &-\Id   &       &       &       &       &\Id                        \\
        &\Id    &-\Id   &       &       &       &       &\phantom{-}\Id\\
        &       &\rddots&\rddots&       &       &       &      &\ddots\\
        &       &       & \Id   &-\Id   &       &       &      &       & \Id
   \end{array}\right].
\end{equation}
Finally, we assume that 
\begin{equation}
A=\left[\begin{array}{rrrcl|rrrlr}
    2A_1+\Id& \Id   &       &        &        &-\Id &      &\\
       -\Id & 2A_2  &\ddots &        &        & \Id &-\Id  &  \\
            &-\Id   &\ddots &\Id     &        &     &\Id   &\ddots  \\
            &       &\ddots &2A_{n-1}&\Id     &     &      &\ddots& &    -\Id \\[0.5em]
     -2\Id  &       &       & -\Id   &2A_n+\Id&     &      &  &  & \Id\\\hline
     \Id    &-\Id   &       &        &        &     &                   \\
            &\Id    &-\Id   &        &        &     &      &\makebox(-30,-20)0\\
            &       &\rddots&\rddots &        &     &&\\ 
            &       &       &\Id\phantom{-}   &-\Id &      & & & &
    \end{array}\right],
\end{equation}
which is maximally monotone because 
it can be written as the sum of
the maximally monotone operator 
$(x_1,\ldots,x_n,v_1,\ldots,v_{n-1})\mapsto 
2A_1x_1\times \cdots \times 2A_nx_n \times\{0\}^{n-1}$,
a linear monotone operator that is the gradient of a convex function, 
and a skew linear operator. 
Next, given $u:=(x_1,\dots, x_n,v_1,\dots,v_{n-1})\in H$, we have
\begin{subequations}
\begin{align}
u\in \zer A&\Leftrightarrow
\left\{\begin{array}{rlr}
0&\in (2A_1+\Id)x_1+x_2-v_1\\
0&\in- x_{i-1}+2A_ix_i+x_{i+1}+v_{i-1}-v_i&\text{ for all }  i\in\{2,\dots,n-1\}\\
0&\in -2x_1-x_{n-1}+(2A_n+\Id)x_n +v_{n-1}\\
0&=x_j-x_{j+1}&\text{ for all }j\in\{1,\dots,n-1\} 
\end{array}\right.\\
&\Leftrightarrow 
\begin{cases}
z=x_1=\dots=x_n\\
v_1\in 2(A_1+\Id)z\\
v_i\in 2A_iz+v_{i-1}&\text{ for all } i\in\{2,\dots,n-1\}\\
v_{n-1}\in 2(\Id-A_n)z\cap (2A_{n-1}z+v_{n-2}).
\end{cases}
\end{align}
\end{subequations}
This gives us
\begin{alignat}{3}
\zer A=& \big\{(z,\dots,z,v_1,\dots,v_{n-1})\in H \mid  &&  z\in \zer(A_1+\dots+A_n)\nonumber\land v_1\in 2(A_1+\Id)z \nonumber\\
 & &&(\forall i\in \{2,\dots,n-2\})\; v_i-v_{i-1} \in 2A_iz\nonumber\\
 & && \land v_{n-1}\in 2(\Id-A_n)z\cap (2A_{n-1}z+v_{n-2}) \big\}.\label{e:MTzerA}
\end{alignat}
Next, we verify that
\begin{equation}\label{e:MTMAinv}
(M+A)^{-1}:X^{2n-1}\to X^{2n-1}:
\begin{bmatrix}
x_1\\\vdots\\x_i\\\vdots\\x_n\\
v_1\\\vdots\\v_{n-1}
\end{bmatrix}
\mapsto
\begin{bmatrix}
y_1\\\vdots\\y_i\\\vdots\\y_n\\
w_1\\\vdots\\w_{n-1}
\end{bmatrix}
=
\begin{bmatrix}
\J{A_1} \parens[\big]{\frac12 x_1}\\\vdots\\
\J{A_i} \parens[\big]{\frac12 x_i+y_{i-1}}\\\vdots\\
\J{A_n} \parens[\big]{\frac12 x_n+y_1+y_{n-1}}\\
v_1-2y_1+2y_2\\
\vdots\\
v_{n-1}-2y_{n-1}+2y_n
\end{bmatrix}
\end{equation}
which is clearly single-valued, full domain, and Lipschitz continuous. Using the fact that $T=(M+A)^{-1}M$ along with \cref{e:MTMAinv} and \cref{e:MTM}, we get
\begin{equation}
T\colon H\to H \colon 
\begin{bmatrix}
x_1\\\vdots\\x_i\\\vdots\\x_n\\
v_1\\\vdots\\v_{n-1}
\end{bmatrix}
\mapsto
\begin{bmatrix}
y_1\\\vdots\\y_i\\\vdots\\y_n\\
w_1\\\vdots\\w_{n-1}
\end{bmatrix}
=
\begin{bmatrix}
\J{A_1} \parens[\big]{\frac12 (x_1-x_2+v_1)}\\
\vdots\\
\J{A_i}\parens[\big]{\frac12(-x_{i-1}+2x_i-x_{i+1}-v_{i-1}+v_i)+y_{i-1}}\\
\vdots\\
\J{A_n}\parens[\big]{\frac12(-x_{n-1}+x_n-v_{n-1})+y_1+y_{n-1}}\\
x_1-x_2+v_1-2y_1+2y_2\\
\vdots\\
x_n-x_{n-1}+v_{n-1}-2y_{n-1}+2y_n
\end{bmatrix}
\end{equation}
while 
\begin{align}
\Tt(w)&=\begin{bmatrix}
w_1\\
\vdots\\
w_{n-1}
\end{bmatrix}
+\begin{bmatrix}
z_2-z_1\\
\vdots\\
z_n-z_{n-1}
\end{bmatrix}
\text{ where }
\begin{bmatrix}
z_1\\
\vdots\\
z_i\\
\vdots\\
z_n
\end{bmatrix}=
\begin{bmatrix}
\J{ A_1}\parens[\big]{\frac12 w_1}\\
\vdots\\
\J{ A_i}\parens[\big]{\frac12 (-w_{i-1}+w_i)+z_{i-1}}\\
\vdots\\
\J{ A_n}\parens[\big]{\frac12 (-w_{n-1})+z_1+z_{n-1}}
\end{bmatrix}. 
\end{align}
Here, $\Tt$ becomes a scaled version of the Malitsky-Tam operator from \cite[Algorithm 1]{MT}. 
Finally, for $w=(w_1,\dots, w_{n-1})$, we have 
\begin{subequations}
\begin{align}
\Fix\Tt&=\big\{ w\in X^{n-1} \mid\; 
\begin{aligned}[t]
& z\in X\land w_1\in 2(\Id +A_1)z\\
&\land ( \forall i\in \{2,\dots,n-2\}) w_i- w_{i-1}\in 2A_i z \\
& \land w_{n-1}\in 2(\Id-A_n)z\cap (2A_{n-1}+w_{n-2})\big\}
\end{aligned}\label{e:240404a}\\
&=\big\{w\in X^{n-1} \mid 
\begin{aligned}[t]
& z\;\in \zer(A_1+\dots+A_n)\land w_1\in 2(\Id +A_1)z\\
& \land  (\forall i\in \{2,\dots,n-2\}) w_i- w_{i-1}\in 2A_i z \\
& \land w_{n-1}\in 2(\Id-A_n)z\cap (2A_{n-1}z+w_{n-2})\big\}.\label{e:240404b}
\end{aligned}
\end{align}
\end{subequations}
To obtain in \cref{e:240404b} that $z\in \zer(A_1+\dots+A_n)$, we use the telescoping property on the statements for $w_i$ in \cref{e:240404a}.

\subsubsection{Specialization to normal cones of linear subspaces}

We now assume that $U_i$ is a closed linear subspace of $X$ and that $A_i=N_{U_i}$ for  each $i\in\{1,\dots, n\}$.
Then $Z=\zer(A_1+\dots+A_n)=U_1\cap \dots \cap U_n$
and \cref{e:MTzerA} yields 
\begin{alignat}{2}
\Fix T&=\big\{(x,\dots,x,v_1,\dots,v_{n-1}) \mid\;  &&  x\in Z \land v_1-2x \in U_1^\perp\notag\\
& &&\land ( \forall i\in \{2,\dots,n-2\}) v_i \in U_i^\perp+v_{i-1} \nonumber\\
& && \land v_{n-1}\in (U_n^\perp+2x)\cap (U_{n-1}^\perp+v_{n-2}) \big\}\label{e:MTzerAlin}
\end{alignat}
and 
\begin{alignat}{2}\label{e:MTFixTTlin}
\Fix\Tt&=\big\{w\in X^{n-1}\mid &&\; z\in Z \land w_1\in U_1^\perp +2z\land ( \forall i\in \{2,\dots,n-2\}) w_i\in U_i^\perp + w_{i-1}\nonumber\\
& && \land w_{n-1}\in (U_n^\perp+ 2z)\cap ( U_{n-1}^\perp+w_{n-2})\big\}.
\end{alignat}

\begin{lemma}\label{lem:MTFix}
Set $\Delta=\menge{(x,\dots,x)\in X^{n-1}}{ x\in X}$ and define
\begin{subequations}
\begin{align}
E&:=\ran\Psi\cap(X^{n-2}\times U_n^\perp)\\
&\subseteq U_1^\perp \times \dots \times (U_1^\perp+\cdots+ U_{n-2}^\perp)\times ((U_1^\perp+\cdots+ U_{n-1}^\perp)\cap U_n^\perp)\label{e:EinUiperp}
\end{align}
\end{subequations}
such that
\begin{align}
\Psi:&= U_1^\perp \times \dots \times U_n^\perp \to X^{n-1}\colon
(y_1,\ldots, y_n)\mapsto (y_1,y_1+y_2,\dots, y_1+\dots+y_{n-1}).
\end{align}
Then
\begin{equation}\label{e:MTFixTlinset}
\Fix T=\menge{(\underbrace{x,\dots,x}_{n \text{\;copies}},2x,\dots,2x)\in X^{2n-1}}{x\in Z}\oplus (\{0\}^n\times E),
\end{equation} 
and
\begin{equation}\label{e:MTFixTtildelinset}
\Fix \Tt=\Delta_{Z^{n-1}}\oplus E,
\end{equation}
where $\Delta_{Z^{n-1}}:=\Delta\cap Z^{n-1}$.
\end{lemma}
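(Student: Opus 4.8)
The plan is to take the ``raw'' descriptions already derived in the excerpt --- \cref{e:MTzerAlin} for $\Fix T$ and \cref{e:MTFixTTlin} for $\Fix\Tt$ --- and merely unwind the recursive membership conditions into closed form; no new fixed point theory is needed, only bookkeeping and the fact that $2Z=Z$.

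First I would handle \cref{e:MTFixTtildelinset}. Let $w=(w_1,\dots,w_{n-1})\in\Fix\Tt$. By \cref{e:MTFixTTlin} there are $z\in Z$ and vectors $y_i:=w_i-w_{i-1}\in U_i^\perp$ (with the convention $w_0:=2z$, so that $y_1=w_1-2z\in U_1^\perp$), $i\in\{1,\dots,n-1\}$, where $y_{n-1}\in U_{n-1}^\perp$ comes from the half $w_{n-1}\in U_{n-1}^\perp+w_{n-2}$ of the final condition in \cref{e:MTFixTTlin}. Solving this recursion (a telescoping sum) gives $w_i=2z+(y_1+\dots+y_i)$ for each $i$, and the remaining half $w_{n-1}\in U_n^\perp+2z$ of the final condition becomes exactly $y_1+\dots+y_{n-1}\in U_n^\perp$. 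Since $Z$ is a linear subspace, $2Z=Z$, so $(2z,\dots,2z)$ runs precisely through $\Delta_{Z^{n-1}}$; and since the last argument of $\Psi$ plays no role in $\ran\Psi$, the vector $(y_1,y_1+y_2,\dots,y_1+\dots+y_{n-1})$ with $y_i\in U_i^\perp$ and $y_1+\dots+y_{n-1}\in U_n^\perp$ is exactly a general element of $\ran\Psi\cap(X^{n-2}\times U_n^\perp)=E$. Hence $w\in\Delta_{Z^{n-1}}+E$, and running the computation backwards (using $2Z=Z$ once more) gives the reverse inclusion, so \cref{e:MTFixTtildelinset} holds as a sum of sets. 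To upgrade $+$ to $\oplus$: $Z=U_1\cap\dots\cap U_n\subseteq U_j$, hence $Z\perp U_j^\perp$, for every $j$; since each coordinate of a point of $E$ is a finite sum of vectors from the $U_j^\perp$'s, every element of $\Delta_{Z^{n-1}}$ is orthogonal to every element of $E$. The same description yields the auxiliary inclusion \cref{e:EinUiperp} at once: the $i$-th coordinate $y_1+\dots+y_i$ lies in $U_1^\perp+\dots+U_i^\perp$, and the last one additionally in $U_n^\perp$.

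The proof of \cref{e:MTFixTlinset} is identical in spirit, now starting from \cref{e:MTzerAlin}: a point of $\Fix T$ has the form $(x,\dots,x,v_1,\dots,v_{n-1})$ with $x\in Z$, and the recursion $v_1-2x\in U_1^\perp$, $v_i\in U_i^\perp+v_{i-1}$ for $i\in\{2,\dots,n-2\}$, $v_{n-1}\in(U_n^\perp+2x)\cap(U_{n-1}^\perp+v_{n-2})$ telescopes, exactly as before, to $v_i=2x+(y_1+\dots+y_i)$ with $y_i\in U_i^\perp$ and $y_1+\dots+y_{n-1}\in U_n^\perp$. Therefore
\[
(x,\dots,x,v_1,\dots,v_{n-1})=(\underbrace{x,\dots,x}_{n\text{ copies}},2x,\dots,2x)+(0,\dots,0,y_1,y_1+y_2,\dots,y_1+\dots+y_{n-1}),
\]
where --- using $2Z=Z$ --- the first summand runs through $\menge{(x,\dots,x,2x,\dots,2x)\in X^{2n-1}}{x\in Z}$ and the second through $\{0\}^n\times E$; the converse inclusion again follows by reversing the steps, and orthogonality of the direct sum follows from $Z\subseteq U_j$ for all $j$ exactly as above. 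This establishes \cref{e:MTFixTlinset}.

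I expect the only real difficulty to be bookkeeping: keeping the index ranges straight --- especially the special role of $w_{n-1}$ (resp.\ $v_{n-1}$), which alone carries the extra $U_n^\perp$ constraint, while the range $\{2,\dots,n-2\}$ may be empty when $n=3$ --- remembering to absorb the factors of $2$ via $2Z=Z$, and noting that the (slightly over-specified) domain $U_1^\perp\times\dots\times U_n^\perp$ of $\Psi$ leaves $\ran\Psi$ unchanged. There is no conceptual obstacle.
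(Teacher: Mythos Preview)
Your proposal is correct and follows essentially the same route as the paper: unwind the recursive conditions in \cref{e:MTzerAlin} and \cref{e:MTFixTTlin} by writing $v_i$ (resp.\ $w_i$) as $2x+y_1+\cdots+y_i$ with $y_j\in U_j^\perp$, read off the decomposition, and obtain orthogonality from $Z\subseteq U_j\perp U_j^\perp$. The paper's proof is terser but structurally identical; your only harmless over-justification is invoking $2Z=Z$ for \cref{e:MTFixTlinset}, where it is not actually needed (the first summand is already parametrized by $x\in Z$).
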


\begin{proof}
Using \cref{e:MTzerAlin}, we know that a 
generic point in $\Fix T$ is represented by 
\begin{equation}
\begin{bmatrix}
x\\
\vdots\\
x\\
2x+y_1\\
\vdots\\
2x+\sum_{j=1}^{n-2} y_{j} \\
2x+\sum_{j=1}^{n-1} y_j 
\end{bmatrix}=
\begin{bmatrix}
x\\
\vdots\\x\\2x+y_1\\
\vdots\\
2x+\sum_{j=1}^{n-2}y_{j}\\
2x+y_n
\end{bmatrix},
\end{equation}
where $y_i\in U_i^\perp$ for each $i\in \{1,\dots,n\}$.
This yields \cref{e:MTFixTlinset}. The orthogonality in \cref{e:MTFixTlinset} follows from \cref{e:EinUiperp} and the fact that $Z^\perp=\overline{(U_1^\perp+\dots+U_n^\perp)}$. The representation for \cref{e:MTFixTtildelinset} can be obtained similarly from \cref{e:MTFixTTlin}. Finally, the orthogonality in \cref{e:MTFixTtildelinset} follows as 
$\Delta_{Z^{n-1}}\subseteq Z^{n-1} \perp \overline{U_1^\perp+\dots+U_n^\perp}^{n-1}\supseteq E$.
\end{proof}

\begin{corollary}
Using the definition of $E$ from \cref{lem:MTFix}, for $w\in X^{n-1}$,
\begin{equation}\label{e:MTPFixTTilde}
P_{\Fix \Tt}w=\big(P_Z(\wbar),\dots,P_Z(\wbar)\big)+P_E(w)
\end{equation}
where $\wbar:=\frac1{n-1} \sum_{i=1}^{n-1}w_i$.
Now let 
$u\in H=X^{2n-1}$, set $w=C^\ast u$ and 
$w^\ast=P_{\Fix \Tt}w$. Then
\begin{align}\label{e:MTPFixT}
P_{\Fix T}^M(u)&=\big( 
\thalb P_Z \wbar,
\ldots, 
\thalb P_Z \wbar, 
w_1^\ast, 
\ldots, 
w_{n-1}^\ast\big). 
\end{align}
\end{corollary}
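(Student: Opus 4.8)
The plan is to establish the two displayed identities separately: \cref{e:MTPFixTTilde} via an orthogonal splitting of the projection operator, and \cref{e:MTPFixT} by transporting \cref{e:MTPFixTTilde} through the bijection $(M+A)^{-1}C\colon\Fix\Tt\to\Fix T$ supplied by \cref{t:krista}\cref{t:krista1}.

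\emph{Identity \cref{e:MTPFixTTilde}.} By \cref{lem:MTFix} we have the \emph{orthogonal} decomposition $\Fix\Tt=\Delta_{Z^{n-1}}\oplus E$ into closed linear subspaces, hence $P_{\Fix\Tt}=P_{\Delta_{Z^{n-1}}}+P_{E}$ (a standard property of orthogonal sums of closed subspaces; see, e.g., \cite{BC2017}). Identifying $\Delta_{Z^{n-1}}$ with $Z$ through $x\mapsto(x,\dots,x)$, computing $P_{\Delta_{Z^{n-1}}}w$ amounts to minimizing $\sum_{i=1}^{n-1}\|w_i-x\|^2$ over $x\in Z$. Since $\sum_{i=1}^{n-1}(w_i-\wbar)=0$, the variance identity $\sum_{i=1}^{n-1}\|w_i-x\|^2=\sum_{i=1}^{n-1}\|w_i-\wbar\|^2+(n-1)\|\wbar-x\|^2$ shows the minimizer is $x=P_Z(\wbar)$; thus $P_{\Delta_{Z^{n-1}}}w=(P_Z(\wbar),\dots,P_Z(\wbar))$, and \cref{e:MTPFixTTilde} follows.

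\emph{Identity \cref{e:MTPFixT}.} By \cref{t:krista}\cref{t:krista1}, $P^M_{\Fix T}(u)=(M+A)^{-1}CP_{\Fix\Tt}(C^*u)=(M+A)^{-1}Cw^*$. Put $z:=P_Z(\wbar)$ and, using the description of $E$ via $\Psi$ in \cref{lem:MTFix}, write $P_Ew=(\eta_1,\eta_1+\eta_2,\dots,\eta_1+\cdots+\eta_{n-1})$ with $\eta_i\in U_i^\perp$ and, crucially, $\eta_1+\cdots+\eta_{n-1}\in U_n^\perp$ (this last membership is exactly the constraint ``$\in X^{n-2}\times U_n^\perp$'' built into $E$ in \cref{e:EinUiperp}). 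Then \cref{e:MTPFixTTilde} gives $w^*=(z+\eta_1,\,z+\eta_1+\eta_2,\,\dots,\,z+\eta_1+\cdots+\eta_{n-1})$. Applying the block matrix $C$, its first $n$ output coordinates telescope to $(z+\eta_1,\eta_2,\eta_3,\dots,\eta_{n-1},-(z+\eta_1+\cdots+\eta_{n-1}))$, while its last $n-1$ coordinates are $(w_1^*,\dots,w_{n-1}^*)$. Feeding this into \cref{e:MTMAinv} with $\J{A_i}=P_{U_i}$, and using $z\in Z\subseteq U_i$ together with $\eta_i\in U_i^\perp$, a straightforward induction shows the first $n$ output coordinates $y_1,\dots,y_n$ all equal $\thalb z=\thalb P_Z(\wbar)$: for $y_1,\dots,y_{n-1}$ this is immediate from the cascaded resolvents, and for $y_n$ one additionally invokes $z\in U_n$ and $\eta_1+\cdots+\eta_{n-1}\in U_n^\perp$ to collapse $P_{U_n}\big(\thalb z-\thalb(\eta_1+\cdots+\eta_{n-1})\big)=\thalb z$. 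Finally the remaining $n-1$ output coordinates $v_j-2y_j+2y_{j+1}$ reduce to $v_j=w_j^*$ since consecutive $y$-values coincide; this yields exactly \cref{e:MTPFixT}.

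The main obstacle is purely bookkeeping: one must verify that the single extra constraint defining $E$ (membership of the last coordinate in $U_n^\perp$) is precisely what is needed to collapse the $n$-th coordinate of $(M+A)^{-1}Cw^*$ to $\thalb z$, and that the telescoping in $C$ dovetails with the cascaded resolvents in \cref{e:MTMAinv}. Everything else reduces to elementary facts about orthogonal projections onto closed subspaces.
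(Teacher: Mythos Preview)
Your proof is correct and follows essentially the same route as the paper's: both derive \cref{e:MTPFixTTilde} from the orthogonal splitting $\Fix\Tt=\Delta_{Z^{n-1}}\oplus E$ in \cref{lem:MTFix}, and both obtain \cref{e:MTPFixT} by pushing $w^*$ through $(M+A)^{-1}C$ via \cref{t:krista}\cref{t:krista1} and \cref{e:MTMAinv}, using that the last coordinate of $P_Ew$ lies in $U_n^\perp$. The only cosmetic difference is in computing $P_{\Delta_{Z^{n-1}}}$: the paper invokes the factorization $P_{Z^{n-1}\cap\Delta}=P_{Z^{n-1}}P_{\Delta}$ (citing \cite[Proof of Lemma~4.3]{BSW}), whereas you argue directly via the variance identity---a self-contained alternative that avoids the external reference.
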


\begin{proof}
The identity \cref{e:MTPFixTTilde} can be obtained from \cref{e:MTFixTtildelinset} after using the fact that $P_{Z^{n-1}\cap \Delta}=P_{Z^{n-1}}P_{\Delta}$ (see \cite[Proof of Lemma~4.3]{BSW}). Note that the projection onto $\Delta$ can be interpreted as reproducing the average of the argument a suitable number of times. For \cref{e:MTPFixT}, we observe through \cref{e:MTPFixTTilde} that $w^\ast_k=z+\sum_{j=1}^k v_j$ for $k\in \{1,\dots,n-1\}$ for some $v_i\in U_i^\perp$ and $z=P_Z(\wbar)$. Therefore using \cref{e:PMFixTwstar} again, but this time with \cref{e:MTMAinv}, we get
\begin{equation}
P_{\Fix T}^M(u)=(M+A)^{-1}
\begin{bmatrix}
z+v_1\\
v_2\\
\vdots\\
v_{n-1}\\
-v_n-z\\
w_1^\ast\\
\vdots\\
w_{n-1}^\ast
\end{bmatrix}
=\begin{bmatrix}
P_{U_1}(\frac12(z+v_1))=\frac12z\\
\vdots\\
P_{U_i}(\frac12 v_i+\frac12z)=\frac12z\\
\vdots\\
P_{U_n}(-\frac12 (v_n+z) +z)=\frac12z\\
w_1^\ast\\
\vdots\\
w_{n-1}^\ast
\end{bmatrix}
\end{equation}
and we are done. 
\end{proof}

\section{More on Chambolle-Pock}

In this section, we provide some additional
insights about the Chambolle-Pock framework
discussed in \cref{ss:CP}.

\label{s:moreCP}

\subsection{Factoring $M$}

\label{ss:moreCPFac}

In this section, we discuss how to find
a factorization of $M$ into $CC^*$ using Linear Algebra techniques. 
Suppose that $X=\RR^n$, $Y=\RR^m$,
and $L\in\RR^{m\times n}$.
Recall from \cref{e:230629a} that
\begin{equation}
\label{e:240928b}
M = \begin{bmatrix}
\frac{1}{\sigma}\Id_X & -L^* \\
-L & \frac{1}{\tau}\Id_Y
\end{bmatrix}\in\RR^{(n+m)\times(n+m)}. 
\end{equation}
Then $M\succeq 0$ 
$\Leftrightarrow$
$\sigma\tau\|L\|^2 \leq 1$ 
which we assume henceforth. 
The following result, 
which is easily verified directly, 
provides a factorization of $M$ into $CC^*$: 

\begin{lemma}
Suppose $Z\in\RR^{m\times m}$ satisfies\footnote{For instance, $Z$ could arise from a
Cholesky factorization of (or as the square root of) $\Id_Y - \sigma\tau LL^*$. A referee pointed out that the result can be generalized to Hilbert spaces.   Indeed, the proof of the operator version works exactly the same: use the form 
given in \cref{e:stefdent} to compute $CC^*$, then simplify the result with \cref{e:240928a} to finally obtain $M$ from \cref{e:240928b}.}
\begin{equation}
\label{e:240928a}
ZZ^* = \Id_Y - \sigma\tau LL^*.
\end{equation}
Then 
\begin{equation}
\label{e:stefdent}
C := \begin{bmatrix}
\frac{1}{\sqrt{\sigma}}\Id_X & 0 \\
-\sqrt{\sigma}L & \frac{1}{\sqrt{\tau}}Z
\end{bmatrix}
\end{equation}
factors $M$ into $CC^*$. 
\end{lemma}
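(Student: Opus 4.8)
The plan is to verify directly that $CC^* = M$ by block matrix multiplication, using only \cref{e:240928a}. First I would record the adjoint of $C$: from \cref{e:stefdent},
\begin{equation*}
C^* = \begin{bmatrix}
\frac{1}{\sqrt{\sigma}}\Id_X & -\sqrt{\sigma}L^* \\
0 & \frac{1}{\sqrt{\tau}}Z^*
\end{bmatrix},
\end{equation*}
and then multiply $C$ by $C^*$ blockwise. The $(1,1)$ block is $\frac{1}{\sqrt{\sigma}}\Id_X\cdot\frac{1}{\sqrt{\sigma}}\Id_X = \frac{1}{\sigma}\Id_X$; the $(1,2)$ block is $\frac{1}{\sqrt{\sigma}}\Id_X\cdot(-\sqrt{\sigma}L^*) = -L^*$; the $(2,1)$ block is $-\sqrt{\sigma}L\cdot\frac{1}{\sqrt{\sigma}}\Id_X = -L$. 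These three already match the corresponding blocks of $M$ in \cref{e:240928b}.

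The only block requiring \cref{e:240928a} is the $(2,2)$ block, which equals
\begin{equation*}
(-\sqrt{\sigma}L)(-\sqrt{\sigma}L^*) + \tfrac{1}{\sqrt{\tau}}Z\cdot\tfrac{1}{\sqrt{\tau}}Z^*
= \sigma LL^* + \tfrac{1}{\tau}ZZ^*
= \sigma LL^* + \tfrac{1}{\tau}\big(\Id_Y - \sigma\tau LL^*\big)
= \tfrac{1}{\tau}\Id_Y,
\end{equation*}
which is exactly the $(2,2)$ block of $M$. Hence $CC^* = M$, as claimed.

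There is essentially no obstacle here: the argument is a one-line block computation together with the substitution $ZZ^* = \Id_Y - \sigma\tau LL^*$. The only point worth a remark is that the hypothesis $\sigma\tau\|L\|^2\leq 1$ (equivalently $M\succeq 0$) is precisely what guarantees that a matrix $Z$ with $ZZ^* = \Id_Y - \sigma\tau LL^*$ exists in the first place (e.g. via Cholesky or a matrix square root), so the statement is non-vacuous; the factorization identity itself, however, holds for any such $Z$ by the computation above. As the referee's footnote indicates, the identical blockwise verification goes through verbatim when $X,Y$ are general Hilbert spaces and $L$ is a continuous linear operator.
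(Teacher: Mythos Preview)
Your proof is correct and matches the paper's approach exactly: the paper states that the result ``is easily verified directly'' and, in the footnote, outlines precisely the block computation you carry out---compute $CC^*$ from \cref{e:stefdent}, simplify with \cref{e:240928a}, and recover $M$ from \cref{e:240928b}. There is nothing to add.
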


\begin{example}
If $m=n=1$, 
$\sigma=\tau=1$, and $L=[\lambda]$, where 
$|\lambda|\leq 1$, then 
\begin{equation}
C = \begin{bmatrix}
1 & 0\\
-\lambda & \sqrt{1-\lambda^2}
\end{bmatrix}
\end{equation}
factors $M$ into $CC^*$. 
\end{example}

\begin{example}
If $\sigma\tau LL^*=\Id_Y$, 
then $Z=0$ and we can simplify and reduce \cref{e:stefdent} further to 
\begin{equation}
C := \begin{bmatrix}
\frac{1}{\sqrt{\sigma}}\Id_X\\
-\sqrt{\sigma}L
\end{bmatrix}\in\RR^{(n+m)\times n}, 
\end{equation}
which factors $M$ into $CC^*$. 
\end{example}

\subsection{An example on the real line}

\label{ss:moreCPRR}

Consider the general setup in \cref{sss:CPgeneral}.
We now specialize this further to the case 
when $X=Y=\RR$ and $L=\lambda\Id$, where 
$|\lambda|\leq 1$. 
Clearly, $\|L\|=|\lambda|$. 
We also assume that $\sigma=\tau=1$.
Then $\sigma\tau\|L\|^2\leq 1$ and the preconditioner matrix (see \cref{e:230629a}) is 
\begin{equation}
\label{e:230623a}
M_\lambda :=  \begin{bmatrix}
1 & -\lambda \\
-\lambda & 1
\end{bmatrix}.
\end{equation}
The matrix $M_\lambda$ acts on $H = \RR^2$, 
and it is indeed positive semidefinite, with eigenvalues 
$1\pm\lambda$ and 
\begin{equation}
\label{e:230623b}
\|M_\lambda\| = 1+|\lambda|.
\end{equation}
If $|\lambda|<1$, then 
\begin{equation}
\label{e:230623c}
M_\lambda^{-1} = 
\frac{1}{1-\lambda^2}
\begin{bmatrix}
1 & \lambda \\
\lambda & 1
\end{bmatrix}
\;\;\text{has eigenvalues $\frac{1}{1\pm\lambda}$,
\;\;and so \;\; $\|M_{\lambda}^{-1}\|=\frac{1}{1-|\lambda|}$}. 
\end{equation}
To find a factorization $M_\lambda=C_\lambda C_\lambda^*$, we follow the recipe given in \cite[Proof of Proposition~2.3]{BCLN}, which starts by
determining the principal square root of $M_{\lambda}$.
Indeed, one directly verifies that
\begin{equation}
\label{e:230623d}
S_\lambda := 
\frac{1}{2}\begin{bmatrix}
\sqrt{1-\lambda}+\sqrt{1+\lambda} &
\sqrt{1-\lambda}-\sqrt{1+\lambda} \\[+2mm]
\sqrt{1-\lambda}-\sqrt{1+\lambda} &
\sqrt{1-\lambda}+\sqrt{1+\lambda} 
\end{bmatrix} = \sqrt{M_\lambda}
\end{equation}
and that $S_\lambda$ has eigenvalues 
$\sqrt{1+\lambda},\sqrt{1-\lambda}$, 
and hence $\|S_\lambda\|=\sqrt{1+|\lambda|}$. 
Note that we have the equivalences 
$S_\lambda$ is invertible
$\Leftrightarrow$ $M_{\lambda}$ is invertible 
$\Leftrightarrow$ 
$|\lambda|<1$, in which case 
\begin{equation}
\label{e:230629b}
 S_\lambda^{-1} = 
\frac{1}{2}\begin{bmatrix}
\frac{1}{\sqrt{1-\lambda}}+
\frac{1}{\sqrt{1+\lambda}} &
\frac{1}{\sqrt{1-\lambda}}-
\frac{1}{\sqrt{1+\lambda}} \\[+2mm]
\frac{1}{\sqrt{1-\lambda}}-
\frac{1}{\sqrt{1+\lambda}} &
\frac{1}{\sqrt{1-\lambda}}
+\frac{1}{\sqrt{1+\lambda}} 
\end{bmatrix},
\end{equation}
$D = \cran S_\lambda = \RR^2 = H$, 
$S_{\lambda}^{-1}$ has eigenvalues 
$1/\sqrt{1+\lambda},1/\sqrt{1-\lambda}$, 
and $\|S_\lambda^{-1}\| = 1/\sqrt{1-|\lambda|}$. 
In addition,
\begin{equation}
S_{\pm 1} = 
\frac{1}{\sqrt{2}}\begin{bmatrix}
1 &
\mp 1 \\[+2mm]
\mp 1 &
1 
\end{bmatrix},
\end{equation}
and here $D = \cran S_{\pm 1} = \RR[1,\mp 1]{\tran}$ is a proper subspace of $H$. 

Let us sum up the discussion as follows.

\begin{lemma}
\label{l:230629a}
Using the square root approach, we may factor 
$M_\lambda$ into 
$M_\lambda = C_\lambda C_\lambda^*$ with 
\begin{equation}
C_\lambda:=\frac{1}{2}\begin{bmatrix}
\sqrt{1-\lambda}+\sqrt{1+\lambda} &
\sqrt{1-\lambda}-\sqrt{1+\lambda} \\[+2mm]
\sqrt{1-\lambda}-\sqrt{1+\lambda} &
\sqrt{1-\lambda}+\sqrt{1+\lambda} 
\end{bmatrix},
\end{equation}
where $C_\lambda = C_\lambda^*$. 
If $|\lambda|=1$, then we may also factor $M_{\pm 1}$ into $C_{\pm 1}C_{\pm 1}^*$, where
\begin{equation}
C_{\pm 1} = \begin{bmatrix} 1 \\ \mp 1\end{bmatrix}. 
\end{equation}
\end{lemma}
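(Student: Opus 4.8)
The plan is to verify the two claimed factorizations by direct computation, leaning on the square-root calculation already recorded just before the lemma. For the first factorization, I would simply observe that the exhibited matrix $C_\lambda$ is \emph{identical} to the matrix $S_\lambda = \sqrt{M_\lambda}$ appearing in \cref{e:230623d}. Since $S_\lambda$ is symmetric, $C_\lambda = C_\lambda^*$ is immediate, and therefore $C_\lambda C_\lambda^* = S_\lambda S_\lambda = S_\lambda^2 = M_\lambda$, the last equality being the defining property of the principal square root (recall from the preceding discussion that $S_\lambda$ has the nonnegative eigenvalues $\sqrt{1\pm\lambda}$, so it genuinely is the principal square root). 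If one prefers a self-contained check, one multiplies out $S_\lambda^2$ and simplifies the diagonal entries via $\tfrac14\big(\sqrt{1-\lambda}+\sqrt{1+\lambda}\big)^2 + \tfrac14\big(\sqrt{1-\lambda}-\sqrt{1+\lambda}\big)^2 = 1$ and the off-diagonal entries via $\tfrac12\big(\sqrt{1-\lambda}+\sqrt{1+\lambda}\big)\big(\sqrt{1-\lambda}-\sqrt{1+\lambda}\big) = \tfrac12\big((1-\lambda)-(1+\lambda)\big) = -\lambda$, recovering \cref{e:230623a}.

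For the second factorization, I would assume $|\lambda| = 1$, i.e., $\lambda = \pm 1$. By \cref{e:230623a}, $M_{\pm 1} = \begin{bmatrix} 1 & \mp 1 \\ \mp 1 & 1 \end{bmatrix}$. On the other hand, with $C_{\pm 1} = \begin{bmatrix} 1 \\ \mp 1 \end{bmatrix}$ and hence $C_{\pm 1}^* = \begin{bmatrix} 1 & \mp 1 \end{bmatrix}$, the $2\times 1$ times $1\times 2$ product is $C_{\pm 1} C_{\pm 1}^* = \begin{bmatrix} 1 \\ \mp 1 \end{bmatrix}\begin{bmatrix} 1 & \mp 1 \end{bmatrix} = \begin{bmatrix} 1 & \mp 1 \\ \mp 1 & 1 \end{bmatrix} = M_{\pm 1}$, as asserted. (This also dovetails with the observation made earlier that $D = \cran S_{\pm 1} = \RR[1,\mp 1]{\tran}$ is one-dimensional, consistent with $C_{\pm 1}$ having a single column.)

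I do not expect any real obstacle here: both parts are routine verifications, and the only mildly substantive point — that the displayed $C_\lambda$ really is the principal square root of $M_\lambda$ — has already been discharged in \cref{e:230623d} and the lines following it. Accordingly, I would keep the proof to a few lines, citing \cref{e:230623d} for the first factorization and carrying out the explicit $2\times 2$ product for the second.
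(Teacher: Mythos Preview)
Your proposal is correct and matches the paper's approach: the paper presents this lemma as a summary of the preceding discussion (``Let us sum up the discussion as follows''), relying on the direct verification of \cref{e:230623d} for the square-root part, and offers no separate proof beyond noting that the verification ``is algebraic in nature.'' Your explicit computations for both factorizations are exactly what the paper has in mind.
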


In fact, the verification of \cref{l:230629a} is 
\emph{algebraic in nature} and one may directly also verify the following operator variants 
of \cref{e:230623d} and \cref{e:230629b}:

\begin{lemma}
\label{l:240929a}
Suppose that $Y=X$ and let $L\colon X\to X$ be
symmetric and positive semidefinite with 
$\|L\|\leq 1$. 
Then the principal square root of the preconditioning operator 
\begin{equation}
M = \begin{bmatrix} \Id & -L \\ -L &\Id \end{bmatrix}
\end{equation}
is 
\begin{equation}
\sqrt{M} = 
\frac{1}{2}\begin{bmatrix}
\sqrt{\Id-L}+\sqrt{\Id+L} &
\sqrt{\Id-L}-\sqrt{\Id+L} \\[+2mm]
\sqrt{\Id-L}-\sqrt{\Id+L} &
\sqrt{\Id-L}+\sqrt{\Id+L} 
\end{bmatrix}. 
\end{equation}
If $\|L\|<1$, then $\Id\pm L$ are invertible and positive definite; moreover, 
\begin{equation}
\sqrt{M}^{-1} = 
\frac{1}{2}\begin{bmatrix}
\sqrt{(\Id-L)^{-1}}+\sqrt{(\Id+L)^{-1}} &
\sqrt{(\Id-L)^{-1}}-\sqrt{(\Id+L)^{-1}} \\[+2mm]
\sqrt{(\Id-L)^{-1}}-\sqrt{(\Id+L)^{-1}} &
\sqrt{(\Id-L)^{-1}}+\sqrt{(\Id+L)^{-1}} 
\end{bmatrix}. 
\end{equation}
\end{lemma}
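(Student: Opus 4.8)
The plan is to verify both formulas by direct computation, exploiting the block structure together with the functional calculus for the commuting self-adjoint operators $\Id-L$ and $\Id+L$. First I would observe that, since $L$ is symmetric with $\|L\|\leq 1$, the operators $\Id-L$ and $\Id+L$ are positive semidefinite and commute, so $P:=\sqrt{\Id-L}$ and $Q:=\sqrt{\Id+L}$ are well-defined, commuting, self-adjoint, and satisfy $P^2+Q^2=2\Id$ and $Q^2-P^2=2L$. To prove the square-root formula it then suffices to square the proposed matrix
\begin{equation}
\label{e:sqrtcand}
R:=\tfrac12\begin{bmatrix} P+Q & P-Q \\ P-Q & P+Q\end{bmatrix}
\end{equation}
and check that $R^2=M$ and that $R\succeq 0$; uniqueness of the positive semidefinite square root then gives $R=\sqrt{M}$.

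The squaring step is a short $2\times 2$ block multiplication: the $(1,1)$ and $(2,2)$ entries of $R^2$ each become $\tfrac14\big((P+Q)^2+(P-Q)^2\big)=\tfrac12(P^2+Q^2)=\Id$, while the $(1,2)$ and $(2,1)$ entries each become $\tfrac14\big((P+Q)(P-Q)+(P-Q)(P+Q)\big)=\tfrac12(P^2-Q^2)=-L$, using throughout that $P$ and $Q$ commute. Hence $R^2=M$. For positive semidefiniteness, I would diagonalize the block structure: conjugating $R$ by the orthogonal operator $\tfrac{1}{\sqrt2}\big[\begin{smallmatrix}\Id & \Id\\ \Id & -\Id\end{smallmatrix}\big]$ turns $R$ into $\operatorname{diag}(P,Q)=\operatorname{diag}(\sqrt{\Id-L},\sqrt{\Id+L})$, which is manifestly positive semidefinite. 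This proves the first claim.

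For the inverse, when $\|L\|<1$ both $\Id-L$ and $\Id+L$ are positive definite, hence invertible, so $P$ and $Q$ are invertible; $R$ is therefore invertible (its block-diagonalization $\operatorname{diag}(P,Q)$ is invertible), so $\sqrt{M}^{-1}=R^{-1}$ exists. I would then simply verify that the proposed matrix
\begin{equation}
\tilde R:=\tfrac12\begin{bmatrix} P^{-1}+Q^{-1} & P^{-1}-Q^{-1} \\ P^{-1}-Q^{-1} & P^{-1}+Q^{-1}\end{bmatrix}
\end{equation}
satisfies $R\tilde R=\Id$ by the same $2\times 2$ block multiplication: the diagonal entries of $R\tilde R$ become $\tfrac14\big((P+Q)(P^{-1}+Q^{-1})+(P-Q)(P^{-1}-Q^{-1})\big)=\tfrac14\big(2\Id+2\Id\big)=\Id$, and the off-diagonal entries become $\tfrac14\big((P+Q)(P^{-1}-Q^{-1})+(P-Q)(P^{-1}+Q^{-1})\big)=0$, again using commutativity of $P$, $Q$ and their inverses. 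Thus $\tilde R=R^{-1}=\sqrt{M}^{-1}$, completing the proof.

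The computations are entirely routine; the only point requiring a moment's care is the justification that $\sqrt{\Id-L}$ and $\sqrt{\Id+L}$ commute (and commute with their inverses), which follows because both are continuous functions — via the functional calculus — of the single self-adjoint operator $L$. If one wished to avoid functional calculus for the positivity of $R$, the block-diagonalization argument above is a self-contained substitute. I do not anticipate any genuine obstacle here; the statement is essentially an operator-valued repackaging of the scalar identity $\sqrt{M_\lambda}=S_\lambda$ already verified in \cref{l:230629a}.
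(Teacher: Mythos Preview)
Your proposal is correct and follows essentially the same approach as the paper: the paper does not give a detailed proof of this lemma but merely remarks that it is an ``operator variant'' of the scalar identities \cref{e:230623d} and \cref{e:230629b} which ``one may directly also verify.'' Your write-up makes this direct algebraic verification explicit, and your block-diagonalization argument for the positive semidefiniteness of $R$ is a clean addition that the paper leaves implicit.
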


The following generalization of \cref{l:240929a} 
was prompted by comments of a reviewer.

\begin{lemma}
\label{l:240929}
Suppose that $X=\RR^n$, $Y=\RR^m$, and $L\colon X\to Y$, i.e., 
$L\in\RR^{m\times n}$, satisfies $\|L\|\leq 1$. 
Set $S := \sqrt{L^*L}$, $U := LS^\dagger$, and $T := \sqrt{LL^*}$
which give rise to
the canonical polar decompositions of $L$ and $L^*$: 
\begin{equation}
L = US \quad\text{and} \quad L^* = U^*T.
\end{equation}
Set $A := \arcsin(S)$ and  $B := \arcsin(T)$. 
Then the principal square root of 
\begin{equation}
M := 
\begin{bmatrix}
\Id_X & -L^*\\
-L & \Id_Y
\end{bmatrix}
\end{equation}
is 
\begin{subequations}
\begin{align}
\sqrt{M} &= \begin{bmatrix}
\cos(A/2) & -U^*\sin(B/2)\\[+2mm]
-U\sin(A/2) & \cos(B/2)
\end{bmatrix} \label{e:wowtrig}\\[+4mm]
&= \frac{1}{2}
\begin{bmatrix}
\sqrt{\Id_X-\sqrt{L^*L}} + \sqrt{\Id_X+\sqrt{L^*L}} 
   &\;\; U^*\bigg(\sqrt{\Id_Y-\sqrt{LL^*}} - \sqrt{\Id_Y+\sqrt{LL^*}} \bigg)
\\[+4mm]
U\bigg(\sqrt{\Id_X-\sqrt{L^*L}} - \sqrt{\Id_X+\sqrt{L^*L}} \bigg) 
   &\;\; \sqrt{\Id_Y-\sqrt{LL^*}} + \sqrt{\Id_Y+\sqrt{LL^*}} 
\end{bmatrix}. \label{e:wowroot}
\end{align}
\end{subequations}
\end{lemma}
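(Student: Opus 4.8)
\emph{Plan.} Because $\sigma=\tau=1$ here, $M\succeq 0$ holds precisely because $\|L\|\le 1$ (e.g.\ via the Schur-complement test $\Id_Y-LL^*\succeq 0$), so $\sqrt M$ exists and is the \emph{unique} positive semidefinite operator squaring to $M$. The plan is to show that the matrix $R$ displayed in \cref{e:wowtrig} is symmetric and positive semidefinite with $R^2=M$, and that it coincides with the matrix in \cref{e:wowroot}; uniqueness then finishes the proof. Both tasks are most transparent through the singular value decomposition of $L$, which block-diagonalizes $M$ into $2\times 2$ pieces, each handled by the real-line formula already recorded in \cref{l:230629a}.

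\emph{SVD set-up and the two forms.} First I would fix an SVD $L=\sum_{i=1}^{r}\sigma_i\,q_ip_i^*$ with $(p_i)$ orthonormal in $X=\RR^n$, $(q_i)$ orthonormal in $Y=\RR^m$, and $0<\sigma_i\le\|L\|\le 1$. Then $S=\sqrt{L^*L}=\sum_i\sigma_i p_ip_i^*$, $T=\sqrt{LL^*}=\sum_i\sigma_i q_iq_i^*$, $U=\sum_i q_ip_i^*$, and one records $U^*U=P_{(\ker L)^\perp}$, $UU^*=P_{\ran L}$, $US=TU=L$, together with the intertwining relations $U\phi(S)=\phi(T)U$ and $\phi(S)U^*=U^*\phi(T)$ for every scalar function $\phi$ on $[0,1]$ (these follow by multiplying $U=\sum_i q_ip_i^*$ against the spectral expansions, the $\ker L$ and $\ker L^*$ terms dropping out since $p_i\perp\ker L$ and $q_i\perp\ker L^*$). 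Put $\alpha_i:=\arcsin\sigma_i\in\left]0,\tfrac{\pi}{2}\right]$, $A:=\arcsin S$, $B:=\arcsin T$ --- legitimate because the spectra of $S$ and $T$ lie in $[0,1]$ --- so that $\sin A=S$, $\sin B=T$, $\cos A=\sqrt{\Id_X-S^2}=\sqrt{\Id_X-L^*L}$ and $\cos B=\sqrt{\Id_Y-LL^*}$. The operator half-angle identities $\cos(A/2)=\tfrac12\big(\sqrt{\Id_X-S}+\sqrt{\Id_X+S}\big)$ and $\sin(A/2)=\tfrac12\big(\sqrt{\Id_X+S}-\sqrt{\Id_X-S}\big)$ (and their $B$-analogues) then follow by squaring both sides, using uniqueness of positive semidefinite square roots and operator monotonicity of $t\mapsto\sqrt t$ (the latter for positivity of $\sqrt{\Id_X+S}-\sqrt{\Id_X-S}$). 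Since $S^2=L^*L$ and $T^2=LL^*$, substituting these identities into \cref{e:wowtrig} yields exactly \cref{e:wowroot}; thus it suffices to analyze either form.

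\emph{$R$ is $\sqrt M$.} Symmetry of $R$ is immediate from $U\cos(A/2)=\cos(B/2)U$ and $U\sin(A/2)=\sin(B/2)U$. For $R\succeq 0$ and $R^2=M$ I would use the orthogonal decomposition $X\times Y=\big(\bigoplus_{i=1}^{r}\operatorname{span}\{(p_i,0),(0,q_i)\}\big)\oplus(\ker L\times\{0\})\oplus(\{0\}\times\ker L^*)$: on the $i$-th two-dimensional summand $M$ acts by $\left[\begin{smallmatrix}1&-\sigma_i\\-\sigma_i&1\end{smallmatrix}\right]$ and $R$ acts by $\left[\begin{smallmatrix}\cos(\alpha_i/2)&-\sin(\alpha_i/2)\\-\sin(\alpha_i/2)&\cos(\alpha_i/2)\end{smallmatrix}\right]$, while both act as the identity on the remaining summand. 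By \cref{l:230629a} (or directly, using $2\sin(\alpha_i/2)\cos(\alpha_i/2)=\sin\alpha_i=\sigma_i$ and $\cos^2(\alpha_i/2)+\sin^2(\alpha_i/2)=1$) the square of the latter $2\times 2$ matrix is the former, and its eigenvalues $\cos(\alpha_i/2)\pm\sin(\alpha_i/2)$ are nonnegative since $\alpha_i/2\in\left]0,\tfrac{\pi}{4}\right]$. Hence $R=R^*\succeq 0$ and $R^2=M$, so $R=\sqrt M$ by uniqueness, proving both \cref{e:wowtrig} and \cref{e:wowroot}. Alternatively one expands $R^2$ blockwise: the off-diagonal blocks collapse to $-U^*(2\sin(B/2)\cos(B/2))=-U^*\sin B=-U^*T=-L^*$ and $-U(2\sin(A/2)\cos(A/2))=-U\sin A=-US=-L$ via the intertwining relations, and the diagonal blocks to $\cos^2(A/2)+U^*U\sin^2(A/2)=\Id_X$ and $UU^*\sin^2(B/2)+\cos^2(B/2)=\Id_Y$, using that $U^*U$ and $UU^*$ act as the identity on the ranges of $\sin^2(A/2)$ and $\sin^2(B/2)$.

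\emph{Main obstacle.} The genuinely delicate points are purely bookkeeping: confirming that the partial-isometry products $U^*U$, $UU^*$ act as the identity exactly where they are composed with $\sin^2(A/2)$, $\sin^2(B/2)$ --- i.e.\ correctly tracking $\ker L$ and $\ker L^*$ --- and that $\arcsin$ is legitimately applied to $S$ and $T$, both of which reduce to the standing hypothesis $\|L\|\le 1$. Everything else is the $2\times 2$ computation already in \cref{l:230629a}.
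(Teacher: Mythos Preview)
Your proof is correct. The paper's own argument also establishes symmetry via the intertwining relation $f(L^*L)U^*=U^*f(LL^*)$ and computes $W^2=M$ blockwise exactly as in your ``alternatively'' paragraph; the genuine difference is how positive semidefiniteness is obtained. You block-diagonalize $M$ and $R$ simultaneously via the SVD, reducing everything to the $2\times 2$ pieces $\left[\begin{smallmatrix}1&-\sigma_i\\-\sigma_i&1\end{smallmatrix}\right]$ already handled in \cref{l:230629a}, and read off the eigenvalues $\cos(\alpha_i/2)\pm\sin(\alpha_i/2)\geq 0$ directly. The paper instead stays at the block-operator level and invokes a Schur-complement criterion: it shows $\cos(A/2)-U^*\sin(B/2)\big(\cos(B/2)\big)^{-1}U\sin(A/2)=\big(\cos(A/2)\big)^{-1}\cos(A)\succeq 0$, using that $\cos(B/2)$ is invertible and that $U^*U$ fixes $\ran\sin(A/2)$. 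Your route is more elementary and makes the connection to the scalar case explicit; the paper's route avoids choosing an SVD basis and (as the paper notes a referee observed) is the one that extends to general Hilbert spaces via the functional calculus, where an explicit eigenbasis may not be available.
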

\begin{proof}
Because this result is not needed elsewhere, 
we only sketch the proof which relies on some more advanced 
matrix analysis 
(see \cite{Higham} and \cite{HJ1,HJ2} for background material). 
(In fact, the reviewer suggested this will work for general operators
using results from \cite{Conway}.)
Note that $S$ and $T$ are symmetric and positive semidefinite. 
We have 
$P_{\ran S}S = S = SS^\dagger S = SP_{\ran S^*} = SP_{\ran S}$. 
For the statement on the canonical polar decomposition, see
\cite[Theorem~8.3 and remarks on page~195]{Higham} 
which also has 
\begin{align}
U^*U = SS^\dagger = P_{\ran S}. 
\end{align}
This implies 
$LL^*U = (US)(US)^*U = USSU^*U = US^2P_{\ran S}=US^2=UL^*L$ and so 
\begin{equation}
\label{e:240929b}
T^2U = (LL^*)U = U(L^*L) = US^2
\end{equation}
and similarly
\begin{equation}
\label{e:240929c}
S^2U^* = (L^*L)U^* = U^*(LL^*) = U^*T^2. 
\end{equation}
The last identity extends to monomials in the form $(L^*L)^kU^* = U^*(LL^*)^k$ and 
further to polynomials. For $f$ suitably 
defined on the spectra of $LL^*$ and $L^*L$, 
we have, using \cite[Theorem~1.33]{Higham}, 
\begin{equation}
\label{e:241001a}
f(L^*L)U^* = U^*f(LL^*)
\end{equation}
and similarly $f(LL^*)U = Uf(L^*L)$. 

The spectra of $S$ and $T$ lie in $[0,1]$ which makes $A$ and $B$ well defined, 
with spectra in $[0,\pi/2]$. It follows that 
the spectra of $A/2$ and $B/2$ lie in $[0,\pi/4]$ which yields 
spectra of $\cos(A/2)$ and $\cos(B/2)$ in $[1/\sqrt{2},1]$. 
Hence $\cos(A/2)$ and $\cos(B/2)$ are invertible. 

Now set 
\begin{equation}
W := \begin{bmatrix}
\cos(A/2) & -U^*\sin(B/2)\\[+2mm]
-U\sin(A/2) & \cos(B/2)
\end{bmatrix}.
\end{equation}
The matrices $A$ and $B$ are symmetric, and so is $W$ 
because $(U\sin(A/2))^* = \sin(A/2)U^* = U^*\sin(B/2)$ 
follows from \cref{e:241001a} with $f(t)=\sin(\thalb\arcsin(\sqrt{t}))$. 
Moreover, since 
$\cos(A/2)\succ 0$, 
$\sin(A/2) = \frac{1}{2}\sin(A)(\cos(A/2))^{-1}$
and 
$P_{\ran S} = U^*U$, we have  
$\ran\sin(A/2)\subseteq\ran\sin(A) = \ran(S)=\Fix P_{\ran S}=
\Fix U^*U$. 
Because $\cos(B/2)\succ 0$, we obtain altogether
\begin{subequations}
\begin{align}
&\hspace{-1cm}\cos(A/2)-U^*\sin(B/2)\big(\cos(B/2)\big)^{-1}U\sin(A/2)\\
&= 
\cos(A/2)-U^*\big(\cos(B/2)\big)^{-1}\sin(B/2)U\sin(A/2)
\\
&=
\cos(A/2)-\big(\cos(A/2)\big)^{-1}U^*U\sin(A/2)\sin(A/2)
\\
&=
\cos(A/2)-\big(\cos(A/2)\big)^{-1}\sin^2(A/2)
\\
&=\big(\cos(A/2)\big)^{-1}\Big(\cos^2(A/2)-\sin^2(A/2) \Big)
\\
&=\big(\cos(A/2)\big)^{-1}\cos(A)
\\
&\succeq 0,
\end{align}
\end{subequations}
and a Schur complement argument shows that $W$ is positive semidefinite. 
We now show that $W^2=M$. 
We start with the $(W^2)_{1,1}$, the upper left block of $W^2$:
\begin{subequations}
\begin{align}
(W^2)_{1,1}
&= \cos^2(A/2) + U^*\sin(B/2)U\sin(A/2)\\
&= \cos^2(A/2) + \sin(A/2)U^*U\sin(A/2)\\
&= \cos^2(A/2) + \sin(A/2)\sin(A/2)\\
&=\Id_X.
\end{align}
\end{subequations}
Similarly, $(W^2)_{2,2} = \Id_Y$. 
Next,
\begin{subequations}
\begin{align}
(W^2)_{2,1}
&=
-U\sin(A/2)\cos(A/2)+\cos(B/2)\big(-U\sin(A/2)\big)
\\
&=
-U\sin(A/2)\cos(A/2)-U\cos(A/2)\sin(A/2)\\
&=-U\sin(A)\\
&=-L.
\end{align}
\end{subequations}
Similarly, $(W^2)_{1,2} = -L^*$.
We have shown that $W^2=M$, and so \cref{e:wowtrig} is verified. 
Finally, for $\theta\in[-1,1]$ we have 
\begin{subequations}
\begin{align}
\cos\big(\thalb\arcsin(\theta) \big)
&= \frac{\sqrt{1+\theta}+\sqrt{1-\theta}}{2}, \\
\sin\big(\thalb\arcsin(\theta) \big)
&= \frac{\sqrt{1+\theta}-\sqrt{1-\theta}}{2}, 
\end{align}
\end{subequations}
and this gives \cref{e:wowroot}.
\end{proof}

\begin{corollary}
Suppose that $X=\RR^n$, $Y=\RR^m$, and 
$L\in\RR^{m\times n}$ satisfies $\sigma^2\|L\|\leq 1$, where 
$\sigma>0$. 
Set $U := L(\sqrt{L^*L})^\dagger$. 
Then the principal square root of 
\begin{equation}
M := \begin{bmatrix}
\tfrac{1}{\sigma}\Id_X & -L^*\\
-L & \tfrac{1}{\sigma}\Id_Y
\end{bmatrix}
\end{equation}
is 
\begin{equation}
\frac{1}{2}
\begin{bmatrix}
\sqrt{\tfrac{1}{\sigma}\Id_X-\sqrt{L^*L}} + \sqrt{\tfrac{1}{\sigma}\Id_X+\sqrt{L^*L}} 
   &\;\; U^*\bigg(\sqrt{\tfrac{1}{\sigma}\Id_Y-\sqrt{LL^*}} - \sqrt{\tfrac{1}{\sigma}\Id_Y+\sqrt{LL^*}} \bigg)
\\[+4mm]
U\bigg(\sqrt{\tfrac{1}{\sigma}\Id_X-\sqrt{L^*L}} - \sqrt{\tfrac{1}{\sigma}\Id_X+\sqrt{L^*L}} \bigg) 
   &\;\; \sqrt{\tfrac{1}{\sigma}\Id_Y-\sqrt{LL^*}} + \sqrt{\tfrac{1}{\sigma}\Id_Y+\sqrt{LL^*}} 
\end{bmatrix}. 
\end{equation}
\end{corollary}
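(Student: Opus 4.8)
The plan is to deduce this from \cref{l:240929} by a straightforward rescaling, rather than repeating the Schur-complement and trigonometric bookkeeping of that proof. First I would pull out the scalar,
\[
M = \frac{1}{\sigma}\begin{bmatrix}
\Id_X & -(\sigma L)^*\\
-\sigma L & \Id_Y
\end{bmatrix},
\]
and set $L' := \sigma L$. The norm hypothesis on $\sigma$ and $L$ ensures $\|L'\| = \|\sigma L\| \le 1$, so \cref{l:240929} applies verbatim to the bracketed matrix with $L$ replaced by $L'$, and \cref{e:wowroot} computes its principal square root.

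Next I would track how the ingredients of \cref{l:240929} transform under $L \mapsto L' = \sigma L$. Since $\sigma > 0$ and $L^*L \succeq 0$, the continuous functional calculus (spectral decomposition) gives $\sqrt{(L')^*L'} = \sqrt{\sigma^2 L^*L} = \sigma\sqrt{L^*L}$ and, symmetrically, $\sqrt{L'(L')^*} = \sigma\sqrt{LL^*}$; meanwhile the partial isometry of the polar decomposition is unchanged:
\[
U' := L'\big(\sqrt{(L')^*L'}\big)^\dagger = \sigma L\,\big(\sigma\sqrt{L^*L}\big)^\dagger = \sigma L\cdot\tfrac{1}{\sigma}\big(\sqrt{L^*L}\big)^\dagger = L\big(\sqrt{L^*L}\big)^\dagger = U,
\]
where I used $(\alpha N)^\dagger = \alpha^{-1}N^\dagger$ for $\alpha > 0$. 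Substituting these into \cref{e:wowroot} expresses the square root of the bracketed matrix in terms of $\sqrt{\Id_X \pm \sigma\sqrt{L^*L}}$, $\sqrt{\Id_Y \pm \sigma\sqrt{LL^*}}$, and $U$.

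Finally, because multiplication by the nonnegative scalar $1/\sigma$ commutes with the operator square root, $\sqrt{M}$ equals $\tfrac{1}{\sqrt{\sigma}}$ times the square root of the bracketed matrix, and I would absorb this prefactor block by block via $\tfrac{1}{\sqrt{\sigma}}\sqrt{\Id_X \pm \sigma\sqrt{L^*L}} = \sqrt{\tfrac{1}{\sigma}\Id_X \pm \sqrt{L^*L}}$ and the analogue on the $Y$-side. This yields exactly the displayed matrix. I do not anticipate a genuine obstacle here: the only points needing a little care are the scaling behaviour of the Moore--Penrose pseudoinverse and of the square-root functional calculus under a positive scalar, both routine. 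As an alternative, one could bypass \cref{l:240929} entirely and simply square the displayed matrix, reproducing the block identities $(W^2)_{1,1} = \tfrac{1}{\sigma}\Id_X$, $(W^2)_{2,2} = \tfrac{1}{\sigma}\Id_Y$, $(W^2)_{2,1} = -L$, $(W^2)_{1,2} = -L^*$ as in the proof of \cref{l:240929}, together with a Schur-complement check of positive semidefiniteness.
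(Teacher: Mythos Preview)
Your proposal is correct and follows essentially the same route as the paper's own proof: rescale $M$ by $\sigma$ to write $\sigma M$ in the form covered by \cref{l:240929} with $L$ replaced by $\sigma L$, observe that the partial isometry $U$ is unchanged under this scaling, then pull the scalar $1/\sqrt{\sigma}$ back through the square roots block by block. The paper carries this out with identical bookkeeping (using the notation $L_1:=\sigma L$, $S_1:=\sigma S$, $T_1:=\sigma T$, $U_1=U$), so there is nothing to add.
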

\begin{proof}
Set $S := \sqrt{L^*L}$, 
$T := \sqrt{LL^*}$, and 
$L_1 := \sigma L$. 
Then $\|L_1\|\leq 1$. 
Next, set ${S}_1:=
\sqrt{{L_1^*}{L_1}} =  \sigma\sqrt{L^*L}=\sigma S$, 
$U_1 := {L_1}({S_1})^\dagger = LS^\dagger = U$, and 
$T_1 := \sqrt{{L_1}{L_1^*}} =  \sigma\sqrt{LL^*}=\sigma T$. 
By \cref{l:240929}, the canonical polar decomposition of $L_1$ is
$L_1 = U_1S_1 = U(\sigma S)$ and we have the principal square root
of 
\begin{equation}
M_1 := \begin{bmatrix}
\Id_X & -L_1^*\\
-L_1 & \Id_Y
\end{bmatrix} = {\sigma} M
\end{equation}
is 
\begin{equation}
\sqrt{M_1}
= \frac{1}{2}
\begin{bmatrix}
\sqrt{\Id_X-S_1} + \sqrt{\Id_X+S_1} 
   &\;\; U^*\big(\sqrt{\Id_Y-T_1} - \sqrt{\Id_Y+T_1} \big)
\\[+4mm]
U\big(\sqrt{\Id_X-S_1} - \sqrt{\Id_X+S_1} \big) 
   &\;\; \sqrt{\Id_Y-T_1} + \sqrt{\Id_Y+T_1} 
\end{bmatrix}. 
\end{equation}
It follows that $\tfrac{1}{\sqrt{\sigma}}\sqrt{M_1} =\sqrt{M}$, i.e., 
\begin{equation}
\sqrt{M}
= \frac{1}{2\sqrt{\sigma}}
\begin{bmatrix}
\sqrt{\Id_X-\sigma S} + \sqrt{\Id_X+\sigma S} 
   &\;\; U^*\big(\sqrt{\Id_Y-\sigma T} - \sqrt{\Id_Y+\sigma T} \big)
\\[+4mm]
U\big(\sqrt{\Id_X-\sigma S} - \sqrt{\Id_X+\sigma S} \big) 
   &\;\; \sqrt{\Id_Y-\sigma T} + \sqrt{\Id_Y+\sigma T} 
\end{bmatrix}, 
\end{equation}
which yields the conclusion. 
\end{proof}

\subsubsection{An example where $\ran M$ is not closed}

\label{sss:notclosed}

Bredies et al.\ hinted in \cite[Remark~3.1]{BCLN}
the existence of $M$ which may fail
to have closed range. We now provide a setting
where $M$, $D$, and $C$ are explicit and where indeed 
$D$ is not smaller than $H$. 
To this end, we assume that 
$X=Y=\ell^2$ and 
\begin{equation}
L\colon \ell^2\to\ell^2\colon \bx = (x_n)_\nnn
\mapsto(\lambda_nx_n)_\nnn, \quad
\text{where 
$(\lambda_n)_\nnn$
is a sequence in $\left]0,1\right[$.}
\end{equation}
It is clear that $L$ is a one-to-one continuous linear operator 
with $\|L\| = \sup_\nnn\lambda_n \leq 1$, 
and $\cran L = Y$. Moreover, 
$L$ is onto $\Leftrightarrow$ $\inf_\nnn\lambda_n>0$.
We think of $L$ as a Cartesian product of 
operators $\RR\to\RR\colon \xi\mapsto \lambda_n\xi$.
Now assume that $\sigma=\tau=1$. 
In view of our work in \cref{ss:moreCPRR}, 
we can interpret the associated 
preconditioner $M$ with 
\begin{equation}
M \colon (x_0,x_1,\ldots) \mapsto  M_{\lambda_0}x_0\times M_{\lambda_1}x_1\times \cdots, 
\end{equation}
where $M_\lambda=C_\lambda C_\lambda^*$, using
\cref{e:230623a} and \cref{l:230629a}. 
Set $C\colon (x_0,x_1,\ldots)
\mapsto C_{\lambda_0}x_0\times C_{\lambda_1}x_1\times \cdots$.
Then $H=D=\cran C^*$.
Note that $M$ is a continuous linear operator, 
with 
\begin{equation}
\|M\| = \sup_\nnn \|M_{\lambda_n}\| = 1 + \sup_\nnn\lambda_n. 
\end{equation}
using \cref{e:230623b}. 
It is clear that $M$ is injective and has dense range because
each $M_{\lambda_n}$ is surjective. 
However, the smallest eigenvalue of $M_{\lambda_n}$
is $1-\lambda_n$; thus, 
\begin{equation}\label{Mbijectiveiff}
\text{$M$ is bijective}\;\;\Leftrightarrow\;\;
\sup_\nnn\lambda_n<1, 
\end{equation}
in which case $\|M^{-1}\| = (1-\sup_\nnn\lambda_n)^{-1}$ by 
\cref{e:230623c}. As $\ran M$ is dense, we have $\cran M=X$. It follows from \cref{Mbijectiveiff} that $\ran M=X\Leftrightarrow \sup_{\nnn} \lambda_n<1$.

It is time for a summary. 
\begin{proposition}
Suppose that $\sup_\nnn\lambda_n=1$. 
Then $M$ is not surjective and $\ran M$ is dense, but not closed. 
If $C\colon D\to H$ is any factorization of $M$
in the sense that $M=CC^*$, then $\ran C^*$ is not closed either. 
\end{proposition}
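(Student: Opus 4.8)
The plan is to read off the first three assertions from the discussion that immediately precedes the statement, and to reduce the last assertion to the closed range theorem together with the (already established) failure of surjectivity of $M$.

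For the first three assertions, I would identify $H=X\times Y$ with the $\ell^2$-sum $\bigoplus_\nnn\RR^2$ of the planes on which the blocks $M_{\lambda_n}$ act, so that $M$ acts blockwise as $M_{\lambda_n}$; each such block has determinant $1-\lambda_n^2\in\zeroun$, hence is a linear bijection of $\RR^2$. Consequently $M$ is injective, and $\ran M$ contains every finitely supported sequence, so $\ran M$ is dense in $H$. By \cref{Mbijectiveiff}, under the hypothesis $\sup_\nnn\lambda_n=1$ the operator $M$ is not bijective; being injective, it is then not surjective. Since a proper dense linear subspace of $H$ is never closed, $\ran M$ is dense but not closed.

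For the last assertion, let $C\colon D\to H$ be any factorization with $CC^*=M$. I would argue by contradiction: assume $\ran C^*$ is closed. By the closed range theorem (see, e.g., \cite{BC2017}), $\ran C$ is closed as well. Since $\ran M=\ran(CC^*)\subseteq\ran C$ and $\ran M$ is dense in $H$ by the previous paragraph, $\ran C$ is dense; being closed, $\ran C=H$, i.e.\ $C$ is onto. On the other hand, $\ran C^*$ being closed forces $\ran C^*=\cran C^*=(\ker C)^\perp$. Combining these, $\ran M=\ran(CC^*)=C\big((\ker C)^\perp\big)=\ran C=H$, the last equality because $C$ and its restriction to $(\ker C)^\perp$ have the same range. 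Thus $M$ is surjective, contradicting the first part; hence $\ran C^*$ is not closed.

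I expect the only genuine subtlety to be the handling of an arbitrary codomain $D$ — rather than just the square-root factorization $C=\sqrt M$ of \cref{l:230629a} — which is precisely what the closed range theorem takes care of; one still has to be careful to push ``$C$ onto'' and ``$\ran C^*$ closed'' through the composition $CC^*$ to conclude ``$M$ onto''. A shorter alternative is to invoke the standard equivalence ``$C$ surjective $\iff$ $CC^*$ invertible (equivalently, $CC^*\succeq c\,\Id$ for some $c>0$)'': then $\ran C$ closed would make $C$ onto and hence $M=CC^*$ invertible, again contradicting the first part. In either route I would also spell out the density of $\ran M$ slightly more carefully than the running text does, via the blockwise identification above.
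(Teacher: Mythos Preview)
Your proposal is correct and follows essentially the same route as the paper: the first part is read off from the preceding discussion, and the second is a contradiction argument showing that closedness of $\ran C^*$ would force $\ran(CC^*)=\ran M$ to be closed (equivalently, all of $H$). The only cosmetic difference is that the paper obtains $\ran C=\ran(CC^*)$ in one stroke by citing \cite[Lemma~8.40]{Deutsch}, whereas you unpack this via the closed range theorem and the identity $C\big((\ker C)^\perp\big)=\ran C$; both lead to the same contradiction.
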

\begin{proof}
We observed already the statement on $M$.
If $\ran C^*$ was closed, then 
\cite[Lemma~8.40]{Deutsch} would imply that 
$\ran(C)=\ran(CC^*)=\ran(M)$ is closed which is absurd. 
\end{proof}

\begin{remark}
Suppose that $\sup_\nnn\lambda_n=1$. 
Then \cite[Proof of Proposition~2.3]{BCLN}
suggests to find $C$ by first computing the 
square root $S$ of $M$. Indeed, 
using \cref{e:230623d}, we see that $S$ is given by 
\begin{equation}
S\colon (x_0,x_1,\ldots) \mapsto 
S_{\lambda_0}x_0\times S_{\lambda_1}x_1\times \cdots, 
\end{equation}
restricted to $\ell^2$.
Once again, we have that $\ran S$ is dense but not closed. Hence $C=S$ and $D=H$ in this case. 
\end{remark}

\section*{Acknowledgments}
\small
We thank the Editor-in-Chief, Dr.\ Jong-Shi Pang, for his 
guidance and support. 
We also would like to thank 
two anonymous reviewers for their 
unusually careful reading and constructive comments which helped
us to improve this manuscript significantly.
The research of the authors was partially supported by Discovery Grants
of the Natural Sciences and Engineering Research Council of
Canada.

\end{document}